\documentclass[12pt]{article}

\setlength{\oddsidemargin}{0in} \setlength{\textwidth}{6.5in}
\setlength{\topmargin}{0in} \setlength{\textheight}{8.5in}
\usepackage{graphicx}
\usepackage{verbatim}
\usepackage{textcomp}
\usepackage{amsmath,amssymb}
\usepackage{esint}
\usepackage{cite}

\hfuzz2pt 



\newenvironment{proof}%
    {{\sc Proof.}}%
  {{\sc q.e.d.} \\}  

\newcommand{\norm}[1]{\left\Vert#1\right\Vert}
\newcommand{\abs}[1]{\left\vert#1\right\vert}
\newcommand{\R}{\mathbb R}
\newcommand{\Hyp}{\mathbb H}
\newcommand{\tu}{\tilde{u}}
\newcommand{\ra}{\rangle}
\newcommand{\la}{\langle}

\newtheorem{theorem}{Theorem}
\newtheorem{lemma}[theorem]{Lemma}
\newtheorem{proposition}[theorem]{Proposition}
\newtheorem{corollary}[theorem]{Corollary}   
\newtheorem{definition}[theorem]{Definition}

\newtheorem{remark}[theorem]{Remark}

\newtheorem{conjecture}[theorem]{Conjecture}

 \newenvironment{proof1}%
{{\sc Proof of Theorem~\ref{main1}.}}%
{{\sc q.e.d.} \\}

\newenvironment{proof2}%
{{\sc Proof of Corollary~\ref{main2}.}}%
{{\sc q.e.d.} \\}

\newenvironment{proof3}%
{{\sc Proof of Corollary~\ref{main3}.}}%
{{\sc q.e.d.} \\}

\begin{document}

\begin{center}
\Large{A Bochner Formula for Harmonic Maps into Non-Positively Curved Metric Spaces}
\end{center}

\vspace*{0.05in}

\begin{center}
Brian Freidin\\
Brown University\\
{\tt bfreidin@math.brown.edu}
\end{center}

\vspace*{0.1in}

\begin{quote} 
\emph{Abstract.} We study harmonic maps from Riemannian manifolds into arbitrary non-positively curved and CAT(-1) metric spaces. First we discuss the domain variation formula with special emphasis on the error terms. Expanding higher order terms of this and other formulas in terms of curvature, we prove an analogue of the Eels-Sampson Bochner formula in this more general setting. In particular, we show that harmonic maps from spaces of non-negative Ricci curvature into non-positively curved spaces have subharmonic energy density. When the domain is compact the energy density is constant, and if the domain has a point of positive Ricci curvature every harmonic map into an NPC space must be constant.
 \end{quote}

\section{Introduction}

In 1964 Eels and Sampson \cite{eells-sampson} introduced a Bochner identity for harmonic maps between smooth Riemannian manifolds. One of the consequences is that for a harmonic map from a space with a non-negative Ricci tensor to a space of non-positive sectional curvatures, the map is totally geodesic, with constant energy density, and if the domain has any point with positive Ricci curvature, then the map is constant.

The study of harmonic maps has applications in the setting of geometric rigidity. The geometric formulation asks if every map is homotopic to a totally geodesic map. The Bochner formula and other vanishing theorems allow one to deduce a positive answer under further geometric assumptions. In the event that one is looking at equivariant harmonic maps, this sort of rigidity statement implies rigidity statements for the representations of fundamental groups.

Gromov and Schoen \cite{gromov-schoen} initiated the study of harmonic maps into singular spaces, in particular Riemannian simplicial complexes. In \cite{chen} Chen introduced harmonic maps with simplicial complexes as domains as well. In \cite{korevaar-schoen1,korevaar-schoen2,jost1,jost2} the study of harmonic maps is extended to maps into non-positively curved (NPC) metric spaces.

An NPC space $X$ is a geodesic length space (any two points can be connected by a curve whose length realizes their distance) with a comparison principle. For any geodesic triangle $ABC$ in $X$, one can build a Euclidean triangle $\bar{A}\bar{B}\bar{C}\subset\R^2$ with the same side lengths. The NPC criterion is that for any point $D$ on the geodesic $BC$ and the corresponding point $\bar{D}$ on $\bar{C}\bar{D}$ cutting the segment into the same proportions, the distince $d(A,D)$ in $X$ is no more than the distance $\abs{\bar{A}-\bar{D}}$ in $\R^2$. These spaces generalize simply connected manifolds of non-positive sectional curvature.

A CAT(-1) space is simply an NPC space with a stronger comparison principle. Instead of constructing comparison triangles in $\R^2$, construct them in $\Hyp^2$, and the CAT(-1) space has the same comparison inequality.

The introduction of singular spaces into the theory of harmonic maps has allowed for more statements in geometric rigidity. For example, \cite{gromov-schoen} studied harmonic maps into Euclidean buildings to assert the $p$-adic superrigidity of lattices in groups of rank $1$. In \cite{daskal-mese-vdovina} Daskalopoulos, Mese and Vdovina prove the superrigidity of hyperbolic buildings, and in \cite{daskal-mese2} Daskalopoulos and Mese prove similar regularity for maps into more general simplicial complexes. In \cite{daskal-mese1} Daskalopoulos and Mese prove rigidity statements for harmonic maps from $2$-dimensional complexes to general NPC spaces.

In \cite{chen}, Chen used techniques developed in \cite{gromov-schoen} to attack the problem of the Bochner formula for harmonic maps from manifolds of non-negative sectional curvature to non-positively curved simplicial complexes. We expand these methods to derive a Bochner formula for maps into general NPC spaces involving the Ricci curvatures of the domain, and as a result conclude:

\begin{theorem} \label{main1}
For a harmonic map $u:M\to X$ from a Riemannian manifold $M$ to an NPC metric space $X$, $\abs{\nabla u}^2$ satisfies the weak differential inequality\

\[
\frac{1}{2}\Delta\abs{\nabla u}^2 \ge Ric:\pi.
\]
Here $\pi$ is the pull-back metric tensor as defined in \cite{korevaar-schoen1} and $Ric:\pi$ denotes the quantity $g^{ij}g^{k\ell}Ric_{ik}\pi_{j\ell}$. If in addition $X$ is CAT(-1), then
\[
\frac{1}{2}\Delta\abs{\nabla u}^2 \ge Ric:\pi + \abs{\nabla u}^4 - \pi:\pi.
\]
\end{theorem}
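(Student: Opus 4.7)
The weak inequality is equivalent to the statement that for every non-negative $\phi\in C_c^\infty(M)$,
\[
\int_M \abs{\nabla u}^2 \, \Delta\phi \, dV \;\ge\; 2\int_M \phi\,(Ric:\pi)\, dV,
\]
with an extra term $2\int_M \phi\,(\abs{\nabla u}^4-\pi:\pi)\,dV$ on the right in the CAT(-1) case. The plan is to combine the domain variation formula (whose error terms are the focus of the preceding section of the paper) with the NPC or CAT(-1) comparison on the target, and to extract the curvature contributions by a careful expansion in normal coordinates.

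First I would localize at an arbitrary $p\in M$ and work in Riemannian normal coordinates, in which $g_{ij}=\delta_{ij}-\tfrac{1}{3}R_{ikjl}x^k x^l + O(\abs{x}^3)$ and $\sqrt{\det g}=1-\tfrac{1}{6}R_{ij}x^i x^j + O(\abs{x}^3)$. Applying the domain variation formula with a vector field of the form $Y=\phi\cdot Z$ for a suitable radial or coordinate field $Z$, harmonicity of $u$ kills the first-order contribution; the leading curvature correction in the metric and volume form then produces a term proportional to $Ric:\pi$ paired against $\phi$. The error-term analysis that the abstract highlights is precisely what is needed to isolate these second-order curvature contributions from the higher-order remainders.

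Second, to bound the target contribution I would invoke the comparison principle. In the smooth setting the Eells--Sampson identity contains the target curvature term $-\sum_{i,j} R^N(du(e_i),du(e_j),du(e_j),du(e_i))$, which is non-negative when $N$ has non-positive curvature and equals $\abs{du}^4-\pi:\pi = (\mathrm{tr}\,\pi)^2-\abs{\pi}^2$ when $N$ has constant curvature $-1$. In the metric setting these bounds are recovered by replacing values of $u$ over a small neighborhood of $p$ by their comparison points in $\R^2$ (for NPC) or $\Hyp^2$ (for CAT(-1)) and expanding the resulting distance inequality. NPC alone yields the favorable sign of the target correction; the sharper CAT(-1) inequality, expanded to fourth order in the geodesic parameters and contracted over an orthonormal frame, produces exactly $\abs{\nabla u}^4-\pi:\pi$.

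The main obstacle is that $u$ takes values only in a metric space, so there is no Hessian of $u$ available and the non-negative term $\abs{\nabla du}^2$ from the smooth Bochner identity must simply be dropped, which is why the conclusion is an inequality rather than an equality. The delicate work is the bookkeeping inside the expansion of the domain variation formula: one must verify that, after applying the NPC or CAT(-1) comparison, the coefficient of the second-order term reduces exactly to $Ric:\pi$ (resp.\ $Ric:\pi+\abs{\nabla u}^4-\pi:\pi$), with the higher-order pieces contributing negligibly after integration against $\phi$ and as the localization scale tends to zero.
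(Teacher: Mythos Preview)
Your outline captures the right ingredients---the domain variation identity, the curvature expansion of the metric in normal coordinates, and the target comparison---but it is missing the actual mechanism that converts these into a Laplacian inequality for $\abs{\nabla u}^2$. The domain variation formula, after harmonicity is used, is an \emph{identity}
\[
(2-n)\int_{B_\sigma}\abs{\nabla u}^2 + \sigma\int_{\partial B_\sigma}\Big(\abs{\nabla u}^2 - 2\abs{\tfrac{\partial u}{\partial r}}^2\Big) = (\text{curvature terms}),
\]
and by itself it does not produce $\Delta\abs{\nabla u}^2$ or anything close: the obstruction is the boundary term $\int_{\partial B_\sigma}\abs{\partial u/\partial r}^2$, which you have no a priori control over. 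Saying ``harmonicity kills the first-order contribution'' is confused here---harmonicity is already spent in deriving the identity, and there is nothing left to cancel that boundary term.

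What the paper actually does is close the loop via a second, independent ingredient: the target variation inequality $\tfrac12\Delta d^2(u,Q)\ge\abs{\nabla u}^2$ (with a sharpened CAT($-1$) version). This, combined with Cauchy--Schwarz on $\partial B_\sigma$, yields $E(\sigma)\le\big(I(\sigma)\int_{\partial B_\sigma}\abs{\partial u/\partial r}^2\big)^{1/2}$, which feeds into an asymptotic expansion of the Almgren-type order function $\sigma E(\sigma)/I(\sigma)$. From this one extracts $\sigma\int_{\partial B_\sigma}\abs{\partial u/\partial r}^2 \ge (1+A\sigma^2+o(\sigma^2))E(\sigma)$ with the curvature-dependent constant $A$, and only then can the domain variation identity be rearranged into a mean value inequality $\fint_{B_\sigma}\abs{\nabla u}^2 \ge \abs{\nabla u}^2(x_0) + \tfrac{1}{n+2}(\text{Ric}:\pi+\cdots)\sigma^2 + o(\sigma^2)$. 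The weak Laplacian inequality is read off from this mean value inequality by a separate lemma. Your sketch skips the order-function step entirely and does not mention the subharmonicity of $d^2(u,Q)$, so as written it would not close; the target comparison enters not as a ``fourth-order expansion of distances'' paired against $\phi$, but through the inequality for $\Delta d^2(u,Q)$ and the resulting control on radial energy.
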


A similar result was obtained in \cite{mese}, where Mese studies the Bochner formula on flat domains and concludes $\Delta\abs{\nabla u}^2\ge-2\kappa\abs{\nabla u}^4$ for maps into CAT($\kappa$) spaces. In \cite{mese} Mese also investigates conformal harmonic maps, deriving a curvature bound for the pull back metric in dimension $2$. We will derive an energy bound for conformal harmonic maps from hyperbolic surfaces into CAT(-1) spaces, much as in \cite{wentworth}.

\begin{corollary}\label{main2}
If $u:M\to X$ is a conformal harmonic map from a closed hyperbolic $n$-manifold $M$ into a CAT(-1) space $X$, then $\abs{\nabla u}^2 \le \frac{n}{n-1}$.
\end{corollary}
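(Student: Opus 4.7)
The plan is to combine the CAT(-1) refinement in Theorem~\ref{main1} with the hyperbolic Ricci identity and the conformality of $u$, and to conclude via the maximum principle on the closed manifold $M$.

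Since $M$ is hyperbolic of dimension $n$, one has $Ric = -(n-1)g$, so
\[
Ric : \pi \;=\; -(n-1)\, g^{ij}\pi_{ij} \;=\; -(n-1)\,|\nabla u|^2.
\]
The conformality assumption says $\pi = \lambda\, g$ for some non-negative function $\lambda$, and tracing gives $\lambda = |\nabla u|^2/n$. A direct computation in an orthonormal frame then expresses $\pi:\pi$ as a constant multiple of $|\nabla u|^4$, and hence the CAT(-1) correction $|\nabla u|^4 - \pi:\pi$ becomes an explicit multiple of $|\nabla u|^4$ as well.

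Substituting these identities into the Bochner inequality produces a weak differential inequality of the form
\[
\tfrac{1}{2}\Delta |\nabla u|^2 \;\ge\; Q\bigl(|\nabla u|^2\bigr),
\]
where $Q$ is a quadratic polynomial in $|\nabla u|^2$ whose relevant positive root is $\tfrac{n}{n-1}$. Because $M$ is closed, $|\nabla u|^2$ is continuous (by Korevaar-Schoen regularity) and attains a global maximum at some point $p_0 \in M$. The weak maximum principle applied at $p_0$ gives $Q(|\nabla u|^2(p_0)) \le 0$, and a sign analysis of $Q$ yields $|\nabla u|^2(p_0) \le \tfrac{n}{n-1}$. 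Since this inequality holds at the global maximum, it holds everywhere on $M$.

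The main obstacle I expect is the clean invocation of the maximum principle, since Theorem~\ref{main1} is a distributional inequality rather than a pointwise one. The argument must appeal to the weak maximum principle for subharmonic functions, using the Lipschitz regularity of $|\nabla u|^2$ provided by the Korevaar-Schoen theory. A secondary technicality is that the pointwise identity $\pi = \lambda g$ must be interpreted in the $L^2$ framework used to define $\pi$; this follows from the (approximate) continuity of the pull-back tensor for sufficiently regular harmonic maps, so that the conformal relation holds almost everywhere and in particular at the maximizer.
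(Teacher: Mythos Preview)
Your approach is exactly the paper's: write $\pi=\lambda g$, plug the hyperbolic Ricci and the conformal identities into the CAT(-1) Bochner inequality of Theorem~\ref{main1}, and apply the maximum principle at a maximizer of the energy density. The paper carries this out in terms of $\lambda$ rather than $|\nabla u|^2=n\lambda$, but the argument is identical.

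One arithmetic point you should actually check rather than assert: with your (correct) identity $Ric:\pi=-(n-1)|\nabla u|^2$ and $\pi:\pi=|\nabla u|^4/n$, the right-hand side of the Bochner inequality becomes
\[
-(n-1)e+\tfrac{n-1}{n}e^2=\tfrac{n-1}{n}\,e\,(e-n),\qquad e=|\nabla u|^2,
\]
whose positive root is $n$, not $n/(n-1)$. The paper, by contrast, writes $Ric:\pi=-n\lambda$ (i.e.\ $-|\nabla u|^2$) and from that obtains $\lambda\le 1/(n-1)$, hence $|\nabla u|^2\le n/(n-1)$. These agree only for $n=2$, which is the surface case cited from \cite{wentworth}. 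So either ``hyperbolic'' is being normalized as $Ric=-g$, or the stated bound for general $n$ should be $|\nabla u|^2\le n$; make sure your computation and the intended normalization are consistent before claiming the root is $n/(n-1)$.

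On the maximum principle: the paper is just as informal as you are, simply saying ``at a point where $\lambda$ is maximized, $\Delta\lambda\le0$.'' Korevaar--Schoen gives $u$ locally Lipschitz, hence $|\nabla u|^2\in L^\infty$, but not continuity of $|\nabla u|^2$; so the clean justification is the weak maximum principle for $L^\infty$ subsolutions (e.g.\ test the inequality against a mollification of $(|\nabla u|^2-c)_+$ for $c$ just below the essential supremum), not a pointwise $C^2$ argument. Your instinct to flag this is right; just don't overclaim regularity you don't have.
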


In \cite{korevaar-schoen2} it is deduced that harmonic maps from flat tori to NPC spaces are totally geodesic. As a byproduct of the methods of this paper, we reproduce this result:

\begin{corollary}\label{main3}
If $M$ is a flat, compact Riemannian manifold and $X$ is an NPC metric space, and $u:M\to X$ is a harmonic map, then $u$ is totally geodesic. That is, constant speed geodesics on $M$ are sent by $u$ to constant speed geodesics in $X$.
\end{corollary}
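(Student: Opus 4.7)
\begin{proof3}
The plan is to derive the corollary from Theorem~\ref{main1} by exploiting the flat and compact hypotheses. Since $M$ is flat, $Ric\equiv 0$, and Theorem~\ref{main1} reduces to the weak subharmonicity
\[
\Delta\abs{\nabla u}^2\ge 0
\]
on $M$. Compactness of $M$ together with the weak maximum principle (applicable because the Korevaar--Schoen regularity theory makes $\abs{\nabla u}^2$ locally bounded) forces $\abs{\nabla u}^2$ to be constant on $M$.

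Constancy of the energy density does not yet give the totally geodesic conclusion, so I would return to the finite-difference derivation of Theorem~\ref{main1}, modelled on Chen~\cite{chen} and Gromov--Schoen~\cite{gromov-schoen}, and retain a non-negative remainder that is discarded in the stated form. In the classical Eels--Sampson identity this extra term is $\abs{\nabla du}^2$, whose vanishing on a flat compact manifold is exactly the totally geodesic condition. The NPC analogue should be a non-negative quantity $Q(u)$ extracted from the second-order expansion of the approximate energies of \cite{korevaar-schoen1}, producing the refinement
\[
\tfrac12\Delta\abs{\nabla u}^2\ge Ric:\pi+Q(u).
\]
Testing this refined inequality against a positive cutoff on the closed manifold $M$ and using $Ric\equiv 0$ together with the already established constancy of $\abs{\nabla u}^2$ forces $Q(u)\equiv 0$ almost everywhere.

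Finally I would convert $Q(u)\equiv 0$ into the statement of the corollary. Let $\gamma:[0,L]\to M$ be a constant speed geodesic and set $\sigma=u\circ\gamma$. Constant directional energy along $\gamma$ endows $\sigma$ with a well-defined metric speed $c$, and the vanishing of $Q(u)$ eliminates the slack in the NPC comparison inequality for triples of the form $(\sigma(s),\sigma(\tfrac{s+t}2),\sigma(t))$, forcing $\sigma(\tfrac{s+t}2)$ to be the metric midpoint of $\sigma(s)$ and $\sigma(t)$. Iterating on dyadic subdivisions of $[0,L]$ yields $d(\sigma(s),\sigma(t))=c\abs{t-s}$ for all $s,t$, which is precisely the statement that $\sigma$ is a constant speed geodesic in $X$.

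The principal obstacle is the first step: because the NPC target admits no connection, the smooth tensor $\nabla du$ has no direct analogue, and the substitute $Q(u)$ must be built from the $\epsilon^2$ corrections in the difference-quotient definitions of energy density. Verifying both that $Q(u)\ge 0$ and that its vanishing faithfully detects the totally geodesic condition, while keeping careful track of the order of every error term in the expansion, is the delicate point of the argument.
\end{proof3}
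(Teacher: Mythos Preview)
Your opening paragraph is fine and agrees with the paper: flatness gives $Ric\equiv 0$, Theorem~\ref{main1} yields $\Delta\abs{\nabla u}^2\ge 0$, and compactness forces $\abs{\nabla u}^2$ constant. The gap is everything after that.

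You propose to extract a nonnegative remainder $Q(u)$ playing the role of $\abs{\nabla du}^2$, upgrade Theorem~\ref{main1} to $\tfrac12\Delta\abs{\nabla u}^2\ge Ric:\pi+Q(u)$, integrate to get $Q(u)\equiv 0$, and then argue that $Q(u)=0$ forces midpoints to midpoints. But you never define $Q(u)$, and you acknowledge in your last paragraph that both its nonnegativity and the implication ``$Q(u)=0\Rightarrow$ totally geodesic'' are the hard part. That is not a loose end; it is the entire content of the corollary. The paper does not construct any such global term, and it is unclear that one with both properties exists in the NPC setting.

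The paper sidesteps this by tracing the \emph{specific} chain of inequalities already built in \S\ref{Sbochner}, rather than packaging them into an abstract $Q(u)$. When $M$ is flat the right side of~\eqref{monotonicity-gs} vanishes identically, so every $o(\sigma^k)$ error in Lemma~\ref{monotonicity1} through Proposition~\ref{mean value inequality} is exactly zero. Constancy of $\abs{\nabla u}^2$ then says $\frac{d}{d\sigma}\fint_{B_\sigma}\abs{\nabla u}^2=0$, and the proof of Proposition~\ref{mean value inequality} expressed this derivative as a nonnegative combination of the slacks in three concrete inequalities: Cauchy--Schwarz in Lemma~\ref{energy bound}, the pointwise bound~\eqref{radial derivatives}, and the order estimate of Proposition~\ref{order}. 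Hence each is an equality. The equality cases give, on every sphere $\partial B_\sigma$ about any center, $\abs{\frac{\partial}{\partial r}d(u,Q)}=\abs{\frac{\partial u}{\partial r}}$ and $\frac{\partial}{\partial r}d(u,Q)=\frac{1}{\sigma}d(u,Q)$. Integrating the first along a radial geodesic shows $d(u(x),u(0))$ equals the length of the image curve, so that image is a geodesic; the second then forces it to have constant speed. No new object needs to be built---the inequalities already in hand carry the argument once you look at their equality cases.
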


After first discussing some useful analytic results in \S\ref{Sprelim}, our next order of business is to discuss a domain variation formula in \S\ref{Sdomain}. We derive the formula from \cite{gromov-schoen} via integration by parts, finding one term to have a sign different from what was printed there.

The term of interest derives from the geometry of the domain. In \cite{gromov-schoen}, this and another term were labeled as remainder and shown to have small enough order in terms of other quantities so as not to affect any further results. The sign of these terms is, however, essential to our work.

Also in \S\ref{Sdomain} we derive the target variation formula as well as reproving the Lipschitz continuity of harmonic maps.

In \S\ref{Sbochner} we expand these remainder terms as well as others to ultimately prove a mean value inequality that will lead to the Bochner identity above.

\textbf{Acknowledgements.} The author would like to thank 
Chikako Mese for her interest and comments, as well as Jingyi Chen for his comments and encouragement.

\section{Preliminaries}\label{Sprelim}

Throughout this paper, $M$ denotes a Riemannian manifold of dimension $n$ with Levi-Civita connection $\nabla$ and Riemannian curvature tensor

\[
R(X,Y)Z = \nabla_X\nabla_Y Z - \nabla_Y\nabla_X Z - \nabla_{[X,Y]}Z.
\]
For a local frame $\{e_i\}$ of tangent vectors, define the symbols of the curvature tensor by
\[
R(e_i,e_j)e_k = R_{ijk}{}^\ell e_\ell
\]
and
\[
R_{ijk\ell} = \la R(e_i,e_j)e_k,e_\ell\ra = R_{ijk}{}^m g_{m\ell}.
\]
The symbols of the Ricci curvature now become
\[
R_{ij} = Ric(e_i,e_j) = g^{k\ell}R_{kij\ell} = R_{kij}{}^k.
\]

We will work throughout this paper in normal coordinates around $x_0\in M$, identified with $0\in T_{x_0}M$. We will denote by $d\mu$ and $d\Sigma$ respectively the Riemannian volume and surface measures, and by $dx$ and $dS$ the Euclidean measures. We will also use the convention that $\omega_n$ is the volume of the unit ball in Euclidean $\R^n$. Under all of our conventions, the methods in \cite{g-j-w} show:

\begin{proposition}\label{metric expansion}
On a Riemannian manifold $M$ take normal coordinates $\{x^i\}$ about a point $x_0\in M$. Identifying $x_0$ with $0\in T_{x_0}M$, the metric may be written locally near $x_0$ as

\[
g_{ij} = \delta_{ij} - \frac{1}{3}R_{kij\ell}(0)x^k x^\ell + O(\abs{x}^3).
\]
And, taking determinants, the volume density can be expanded as
\[
\sqrt{g} = 1 - \frac{1}{6}R_{ij}(0)x^i x^j + O(\abs{x}^3).
\]
Here the curvature symbols $R_{kij\ell}$ and $R_{ij}$ are symbols of the curvature tensor at $0=x_0$. Both of the $O(\abs{x}^3)$ terms depend on the geometry of $M$, and in particular are smooth functions of $x$.
\end{proposition}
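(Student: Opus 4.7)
The plan is to Taylor expand $g_{ij}$ about $0$ using the defining property of normal coordinates: for every $v\in T_{x_0}M$ the curve $t\mapsto tv$ parametrizes the geodesic $\gamma_v(t)=\exp_{x_0}(tv)$. This gives $g_{ij}(0)=\delta_{ij}$ immediately, and the geodesic identity $\Gamma^k_{ij}(x)x^i x^j\equiv 0$ forces $\Gamma^k_{ij}(0)=0$ and hence $\partial_k g_{ij}(0)=0$. The Taylor expansion of $g_{ij}$ therefore begins at second order, and the task reduces to identifying its Hessian at $0$ in terms of the curvature.

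To extract this Hessian I would employ Jacobi fields. For fixed $v\in T_{x_0}M$, consider the variation $\alpha(s,\tau)=\exp_{x_0}(\tau(v+se_i))$, which is a geodesic in $\tau$ for every $s$. The derivative $J_i(\tau):=\partial_s\alpha(0,\tau)$ is a Jacobi field along $\gamma_v$ with $J_i(0)=0$ and $J_i'(0)=e_i$, and in normal coordinates one checks $J_i(\tau)=\tau\,\partial_i|_{\tau v}$. Differentiating the Jacobi equation $J''=-R(J,\gamma')\gamma'$ at $\tau=0$ yields $J_i'''(0)=-R(e_i,v)v$, so
\[
\partial_i|_{\tau v} = e_i - \tfrac{\tau^2}{6}\,R(e_i,v)v + O(\tau^3),
\]
interpreted after parallel transport back to $T_{x_0}M$. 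Since parallel transport is an isometry,
\[
g_{ij}(\tau v) = \delta_{ij} - \tfrac{\tau^2}{6}\bigl(\la R(e_i,v)v,e_j\ra + \la R(e_j,v)v,e_i\ra\bigr) + O(\tau^3).
\]
Setting $\tau=1$ and $v=x^k e_k$, the bracketed expression becomes $\bigl(R_{ik\ell j}+R_{jk\ell i}\bigr)(0)\,x^k x^\ell$, and applying the standard symmetries of the Riemann tensor together with the first Bianchi identity to symmetrize in $k,\ell$ rewrites this as $2R_{kij\ell}(0)\,x^k x^\ell$, yielding the first claimed expansion.

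For the volume density I would use $\sqrt{\det(I+A)}=1+\tfrac{1}{2}\operatorname{tr}(A)+O(\|A\|^2)$ with $A_{ij}=-\tfrac{1}{3}R_{kij\ell}(0)x^k x^\ell$. The trace contracts $\delta^{ij}$ against $R_{kij\ell}(0)$, and using pair symmetry $R_{kij\ell}=R_{j\ell ki}$ one identifies $\delta^{ij}R_{kij\ell}(0)$ with the paper's Ricci symbol $R_{k\ell}(0)=g^{ij}(0)R_{ikj\ell}(0)$, producing the claimed $-\tfrac{1}{6}R_{ij}(0)x^i x^j$ correction. Smoothness of the $O(\abs{x}^3)$ remainders in both expansions is immediate from the smoothness of the exponential map.

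The main obstacle is purely symbolic bookkeeping: the Jacobi-field computation naturally produces a coefficient with index pattern $R_{ik\ell j}+R_{jk\ell i}$, and matching this against the paper's $R_{kij\ell}$ requires careful symmetrization in $k,\ell$ together with the pair symmetry, pairwise antisymmetry, and first Bianchi identity for $R$. Once that identification is in place, the volume density expansion follows with essentially no further input.
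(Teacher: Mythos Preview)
Your argument is correct. The paper does not actually supply a proof of this proposition; it simply states that ``the methods in \cite{g-j-w} show'' the expansion and then records the result. Your Jacobi-field derivation is the standard route to this formula and is almost certainly what the cited reference does as well.

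A couple of small remarks on your write-up. First, the identification $(R_{ik\ell j}+R_{jk\ell i})x^k x^\ell = 2R_{kij\ell}\,x^k x^\ell$ in fact needs only the pairwise antisymmetries and the pair symmetry $R_{abcd}=R_{cdab}$ (from which one reads off $R_{ik\ell j}=R_{kij\ell}$ and $R_{jk\ell i}=R_{\ell ijk}$, and then relabels $k\leftrightarrow\ell$ in the second term); the first Bianchi identity is only implicit through the pair symmetry. Second, when you differentiate the Jacobi equation to get $J_i'''(0)=-R(e_i,v)v$, it is worth noting that the covariant derivative of $R$ along $\gamma$ contributes a term $-(\nabla_{\gamma'}R)(J,\gamma')\gamma'$ which vanishes at $\tau=0$ because $J(0)=0$; you use this implicitly. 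Neither point is a gap, just places where a reader might pause.
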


We will also require the following:

\begin{proposition}\label{quadratic form}
Let $Q$ and $\tilde{Q}$ be quadratic forms on Euclidean space $\R^n$. Write $Q(x) = Q_{ij}x^ix^j$ and $\tilde{Q}(x) = \tilde{Q}_{ij}x^ix^j$ for symmetric matrices $Q_{ij}$ and $\tilde{Q}_{ij}$. Then

\begin{enumerate}
\item\label{quadratic on sphere}
\[
\int_{\partial B_\sigma}Q(x)dS = \omega_n tr(Q)\sigma^{n+1}
\]

\item\label{quadratic in ball}
\[
\int_{B_\sigma}Q(x)dx = \frac{\omega_n}{n+2}tr(Q)\sigma^{n+2}
\]

\item\label{product quadratic on sphere}
\[
\int_{\partial B_\sigma}Q(x)\tilde{Q}(x)dS = \frac{\omega_n}{n+2}\left(2Q:\tilde{Q}+tr(Q)tr(\tilde{Q})\right)\sigma^{n+3}.
\]
\end{enumerate}
Here the notation $Q:\tilde{Q}$ denotes the trace of the product of the matrices, $Q:\tilde{Q} = Q_{ij}\tilde{Q}_{ji}$.
\end{proposition}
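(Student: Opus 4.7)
The plan is to reduce all three identities to a small set of basic rotational moments of the sphere and then contract with the coefficient matrices. The central symmetry input is that the surface measure on $\partial B_\sigma$ is $O(n)$-invariant; everything else is bookkeeping.

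First I would handle part (1). The off-diagonal integrals $\int_{\partial B_\sigma} x^i x^j \, dS$ vanish for $i \ne j$ by the reflection symmetry $x^i \mapsto -x^i$, and the diagonal integrals $\int_{\partial B_\sigma} (x^i)^2 \, dS$ all coincide by the rotational symmetry swapping coordinate axes. Summing over the diagonal gives $\sigma^2$ times the surface area $n\omega_n \sigma^{n-1}$, so each diagonal entry equals $\omega_n \sigma^{n+1}$, and contracting with $Q_{ij}$ yields $\omega_n \, tr(Q) \, \sigma^{n+1}$. Part (2) is then immediate by the coarea formula and a single integration in the radial variable.

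The substantive step is part (3), which I would attack through the fourth-moment tensor
\[
T^{ijk\ell}(\sigma) := \int_{\partial B_\sigma} x^i x^j x^k x^\ell \, dS.
\]
This tensor is totally symmetric in its four indices and invariant under $O(n)$, so by the standard classification of $O(n)$-invariant symmetric tensors of rank four it must be a constant multiple of $\delta^{ij}\delta^{k\ell} + \delta^{ik}\delta^{j\ell} + \delta^{i\ell}\delta^{jk}$. To pin down the constant, I would double-contract with $\delta_{ij}\delta_{k\ell}$: the left-hand side becomes $\int_{\partial B_\sigma} \abs{x}^4 \, dS = n\omega_n \sigma^{n+3}$, the right produces $n(n+2)$ times the constant, so the constant equals $\omega_n \sigma^{n+3}/(n+2)$. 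Contracting with $Q_{ij}\tilde{Q}_{k\ell}$ and using the symmetry of $Q$ and $\tilde{Q}$ converts the three Kronecker-delta products into $tr(Q)\, tr(\tilde{Q})$, $Q:\tilde{Q}$, and $Q:\tilde{Q}$ respectively, producing the asserted formula.

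The only step with genuine content is determining the constant in the fourth-moment tensor, which is therefore the main (and arguably sole) obstacle. Anyone uncomfortable with invoking the classification of $O(n)$-invariant tensors can instead read off the coefficient directly by computing two explicit moments such as $\int_{\partial B_\sigma}(x^1)^4 \, dS$ and $\int_{\partial B_\sigma}(x^1 x^2)^2 \, dS$ via the standard Beta-function formulas for moments on the sphere, then matching coefficients against the two linearly independent $O(n)$-invariants $Q:\tilde{Q}$ and $tr(Q)\, tr(\tilde{Q})$ on the space of symmetric bilinear forms.
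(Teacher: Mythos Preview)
Your proof is correct. Parts (1) and (2) follow essentially the same symmetry-plus-radial-integration route as the paper, so there is nothing to add there.

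For part (3), however, you take a genuinely different path. The paper introduces the vector field
\[
V = \sigma\, Q_{ij}x^i x^j\, \tilde{Q}_{k\ell} x^k e_\ell,
\]
observes that $V\cdot\nu = Q(x)\tilde{Q}(x)$ on $\partial B_\sigma$, and applies the divergence theorem. Computing $\operatorname{div}V = \sigma\bigl(2\,x^{T}Q\tilde{Q}\,x + \operatorname{tr}(\tilde{Q})\,Q(x)\bigr)$ and integrating over $B_\sigma$ using part (2) immediately gives the stated formula. By contrast, you classify the $O(n)$-invariant totally symmetric fourth-moment tensor $T^{ijk\ell}=\int_{\partial B_\sigma}x^i x^j x^k x^\ell\,dS$ as a multiple of $\delta^{ij}\delta^{k\ell}+\delta^{ik}\delta^{j\ell}+\delta^{i\ell}\delta^{jk}$ and fix the constant by a double contraction. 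Your argument is clean and extends naturally to higher moments, but it does lean on the (admittedly standard) structure theorem for $O(n)$-invariant symmetric tensors. The paper's divergence-theorem trick is more elementary in that it reduces part (3) directly to part (2) without any invariant-theory input; on the other hand the choice of vector field looks somewhat ad hoc unless one has seen the device before.
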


\begin{proof}
For \ref{quadratic on sphere}, see

\[
\int_{\partial B_\sigma}Q(x)dS = \int_{\partial B_\sigma}Q_{ij}x^ix^j.
\]
For $i\neq j$, this integral vanishes since the integrand is an odd function of $x^i$. So
\begin{eqnarray*}
\int_{\partial B_\sigma}Q(x)dS & = & Q_{ii} \int_{\partial B_\sigma}(x^i)^2dS\\
 & = & \frac{1}{n}tr(Q)\int_{\partial B_\sigma}\sigma^2dS\\
 & = & \omega_n tr(Q) \sigma^{n+1}.
\end{eqnarray*}

\ref{quadratic in ball} follows from integrating \ref{quadratic on sphere}. For \ref{product quadratic on sphere} compute

\[
\int_{\partial B_\sigma}Q(x)\tilde{Q}(x)dS = \int_{\partial B_\sigma}Q_{ij}x^ix^j \tilde{Q}_{k\ell}x^kx^\ell.
\]

Define a vector field $V = \sigma Q_{ij}x^ix^j\tilde{Q}_{k\ell}x^ke_\ell$ and note that on $\partial B_\sigma$ the unit outward normal is $\nu = \frac{1}{\sigma}x^me_m$. Now the divergence theorem yields the result.
\end{proof}

Two special cases of part \ref{quadratic in ball} are as follows: If we take the Euclidean metric $g\big|_{x_0}$ in $T_{x_0}M$, and let $Q(x) = Ric_{x_0}(x,x)$, then $Q_{ij} = Ric_{ij}(0)$ are the symbols of the Ricci tensor, whose trace $tr(Ric) = S$ is the scalar curvature, so

\begin{equation}\label{integrate ricci}
\int_{B_\sigma(0)}Ric_{x_0}(x,x)dx = \frac{\omega_n}{n+2}S(x_0)\sigma^{n+2}.
\end{equation}

If we fix a vector $v\in T_{x_0}M$ and take $Q(x) = \la R_{x_0}(x,v)v,x\ra$, then $Q_{ij} = v^kv^\ell R_{ik\ell j}(x_0)$ and $tr(Q) = v^kv^\ell R_{ik\ell i}(x_0) = v^kv^\ell Ric_{k\ell}(x_0) = Ric_{x_0}(v,v)$, so

\begin{equation}\label{integrate sectional}
\int_{B_\sigma(0)}\la R_{x_0}(v,x)x,v\ra dx = \frac{\omega_n}{n+2}Ric_{x_0}(v,v)\sigma^{n+2}.
\end{equation}

We also now have the tools to phrase a more precise asymptotic version of the Bishop-Gromov comparison theorem:

\begin{proposition}\label{bishop-gromov}
Let $M$ be a Riemannian manifold of dimension $n$. Then for $x_0\in M$ and $\sigma$ small,

\[
\abs{\partial B_\sigma(x_0)} = \left(\frac{n}{\sigma} - \frac{1}{3(n+2)}S(x_0)\sigma + O(\sigma^2)\right)\abs{B_\sigma(x_0)}.
\]
\end{proposition}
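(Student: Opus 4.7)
The plan is to expand both the volume $|B_\sigma(x_0)|$ and the surface area $|\partial B_\sigma(x_0)|$ separately using normal coordinates, then take the ratio. By Proposition \ref{metric expansion}, in normal coordinates about $x_0$ the volume density is
\[
\sqrt{g} = 1 - \frac{1}{6}R_{ij}(0)x^ix^j + O(|x|^3),
\]
so the integrals reduce to Euclidean integrals of the quadratic form $Q(x) = R_{ij}(0)x^ix^j$, whose trace is the scalar curvature $S(x_0)$.

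First I would compute
\[
|B_\sigma(x_0)| = \int_{B_\sigma(0)}\sqrt{g}\,dx = \omega_n\sigma^n - \frac{1}{6}\int_{B_\sigma(0)}R_{ij}(0)x^ix^j\,dx + O(\sigma^{n+3}),
\]
and apply Proposition \ref{quadratic form}(\ref{quadratic in ball}) to obtain
\[
|B_\sigma(x_0)| = \omega_n\sigma^n - \frac{\omega_n}{6(n+2)}S(x_0)\sigma^{n+2} + O(\sigma^{n+3}).
\]
Analogously, integrating $\sqrt{g}\,dS$ over the Euclidean sphere $\partial B_\sigma(0)$ and using Proposition \ref{quadratic form}(\ref{quadratic on sphere}) gives
\[
|\partial B_\sigma(x_0)| = n\omega_n\sigma^{n-1} - \frac{\omega_n}{6}S(x_0)\sigma^{n+1} + O(\sigma^{n+2}).
\]

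Finally I would divide, factoring $\omega_n\sigma^n$ from the denominator and expanding $(1-t)^{-1} = 1 + t + O(t^2)$ with $t = \frac{1}{6(n+2)}S(x_0)\sigma^2 + O(\sigma^3)$. The leading term gives $n/\sigma$, and the $O(\sigma)$ correction collects as
\[
\left(\frac{n}{6(n+2)} - \frac{1}{6}\right)S(x_0)\sigma = -\frac{1}{3(n+2)}S(x_0)\sigma,
\]
yielding the stated expansion. There is no real obstacle here; the only thing to be careful about is tracking the remainder terms so that after multiplication the error is genuinely $O(\sigma^2)$ times $|B_\sigma(x_0)|$, which follows because the next-order corrections to $\sqrt{g}$ in Proposition \ref{metric expansion} are smooth and hence integrate to contributions of order $\sigma^{n+2}$ in the surface and $\sigma^{n+3}$ in the ball, bounding the combined error when divided through.
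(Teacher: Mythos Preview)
Your proposal is correct and follows essentially the same route as the paper: expand $\sqrt{g}$ in normal coordinates via Proposition~\ref{metric expansion}, integrate using Proposition~\ref{quadratic form} to obtain the expansions of $|B_\sigma|$ and $|\partial B_\sigma|$, then divide and expand the denominator. Your final algebraic step is in fact slightly more explicit than the paper's, but the argument is the same.
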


\begin{proof}
The ball of radius $\sigma$ in normal coordinates coincides with the Euclidean ball of the same radius, and we have the Taylor expansion of the volume density from Proposition~\ref{metric expansion}:

\[
\sqrt{g} = 1 - \frac{1}{6}R_{ij}(0)x^ix^j + O(\abs{x}^3).
\]
Integrate to find the volume of the ball, using part \ref{quadratic in ball} from Proposition~\ref{quadratic form}:
\begin{eqnarray*}
\abs{B_\sigma} & = & \int_{B_\sigma}\sqrt{g}dx\\
 & = & \int_{B_\sigma}\left(1-\frac{1}{6}R_{ij}x^ix^j + O(\abs{x}^3)\right)dx\\
 & = & \omega_n\sigma^n - \frac{\omega_n}{6(n+2)}S(0)\sigma^{n+2} + O(\sigma^{n+3})\\
\frac{1}{\abs{B_\sigma}} & = & \frac{1}{\omega_n\sigma^n}\left( 1 + \frac{1}{6(n+2)}S(0)\sigma^2 + O(\sigma^3)\right).
\end{eqnarray*}

Now the area of the sphere can be found by differentiating the above, or by integrating the volume density on the sphere:

\begin{eqnarray*}
\abs{\partial B_\sigma} & = & n\omega_n\sigma^{n-1} - \frac{\omega_n}{6}S(0)\sigma^{n+1} + O(\sigma^{n+2})\\
\frac{\abs{\partial B_\sigma}}{\abs{B_\sigma}} & = & \left( \frac{n}{\sigma} - \frac{1}{6}S(0)\sigma + O(\sigma^2)\right) \left( 1 + \frac{1}{6(n+2)}S(0)\sigma^2 + O(\sigma^3)\right)\\
 & = & \frac{n}{\sigma} - \frac{1}{3(n+2)}S(0)\sigma + O(\sigma^2).
\end{eqnarray*}
\end{proof}

Throughout much of the paper, we will be expanding non-smooth terms, and we will need to understand how small the error terms are. The following definition captures the features of the error terms we will accumulate that are essential to showing they are small enough.

\begin{definition}
A term $o(\sigma^k)$ denotes a function on $M$ depending on the parameter $\sigma$ with the property that for $\sigma$ less than some $\sigma_0$, $\norm{\frac{o(\sigma^k)}{\sigma^k}}_{L^\infty}$ is finite and bounded independent of $\sigma$ (i.e. an $o(\sigma^k)$ is $O(\sigma^k)$), and for almost every $x\in M$, $\frac{o(\sigma^k)}{\sigma^k}\to 0$ as $\sigma\to 0$.
\end{definition}

\begin{remark}
By the dominated convergence theorem, a term that is $o(\sigma^k)$ has, for any compact $K\subseteq M$,

\[
\norm{\frac{o(\sigma^k)}{\sigma^k}}_{L^1(K)}\to 0.
\]
Note also that a smooth $O(\sigma^k)$ term is, in particular, $o(\sigma^{k-1})$.
\end{remark}

We will need to do some arithmetic with $o(\sigma^k)$ terms, as well as integrate such a function over a range of $\sigma$.

\begin{lemma}\label{integrate-little o}
For a function $o(\sigma^k)$, the function
\[
\int_0^\sigma o(\tau^k)d\tau
\]
defined by integrating $o(\sigma^k)$ pointwise on $M$ is $o(\sigma^{k+1})$. Additionally,

\begin{itemize}
\item $o(\sigma^k)o(\sigma^\ell) = o(\sigma^{k+\ell})$
\item $e^{o(\sigma^k)} = 1 + o(\sigma^k)$
\item $\frac{1}{1+o(\sigma^k)} = 1 + o(\sigma^k)$.
\end{itemize}
\end{lemma}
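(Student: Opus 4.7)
The strategy is to verify, for each of the four claims, both clauses in the definition of an $o(\sigma^k)$ term: a uniform $L^\infty$ bound of the appropriate order that holds for almost every $x$ with constant independent of $\sigma$, together with pointwise vanishing of the normalized ratio as $\sigma \to 0$ for almost every $x$. The role of the uniform bound is precisely to reduce the pointwise a.e.\ claim to an elementary $\epsilon$-argument at each individual point of the full-measure set.

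For the integration statement, let $f(x,\sigma)$ be an $o(\sigma^k)$ term and fix a constant $C$ with $|f(x,\tau)|\le C\tau^k$ for $\tau\le\sigma_0$ and a.e.\ $x$. Then
\[
\left|\int_0^\sigma f(x,\tau)\,d\tau\right| \le \frac{C\sigma^{k+1}}{k+1},
\]
which is the required $L^\infty$ bound. For the pointwise half, fix any $x$ in the full-measure set on which $f(x,\tau)/\tau^k\to 0$ as $\tau\to 0$. Given $\epsilon>0$ pick $\delta>0$ with $|f(x,\tau)|\le\epsilon\tau^k$ for $\tau\le\delta$; integrating over $[0,\sigma]$ with $\sigma\le\delta$ yields $|\int_0^\sigma f(x,\tau)\,d\tau|\le \epsilon\sigma^{k+1}/(k+1)$, and thus the primitive divided by $\sigma^{k+1}$ goes to zero.

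The remaining three identities are purely algebraic. For the product, both the $L^\infty$ bounds and the pointwise statement multiply: at each point where $f/\sigma^k\to 0$, the product $(f/\sigma^k)(g/\sigma^\ell)$ tends to zero since one factor vanishes and the other is uniformly bounded. For the exponential, write $e^f-1 = f\cdot\sum_{n\ge 0} f^n/(n+1)!$; once $\sigma$ is small enough that $|f|\le C\sigma^k\le 1$, the series is controlled by a universal constant, giving the $L^\infty$ bound, while pointwise the series tends to $1$ and $f/\sigma^k$ tends to zero. For the reciprocal, use $(1+f)^{-1}-1 = -f/(1+f)$; once $|f|\le 1/2$ uniformly one has $|1+f|\ge 1/2$, so both clauses transfer from $f$ to $-f/(1+f)$ with only a constant loss.

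There is no serious obstacle here; the one subtlety worth flagging is in the integration step, where one must pass from pointwise a.e.\ decay of the integrand in $\tau$ to pointwise a.e.\ decay of the primitive in $\sigma$. This is exactly why the definition of $o(\sigma^k)$ bundles the uniform $L^\infty$ bound with the a.e.\ vanishing: without the uniform bound one would need a dominated-convergence-type argument rather than the direct $\epsilon$-estimate above.
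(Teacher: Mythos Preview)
Your proof is correct and follows essentially the same approach as the paper's. The paper proves only the integration statement in detail (via the same $L^\infty$ bound from integrating $C\tau^k$ and the same pointwise $\epsilon$-argument you give), then dismisses the three algebraic identities with ``the other three follow similarly, the latter two with Taylor expansions''; you have simply supplied those details explicitly.
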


\begin{proof}
We will prove the integral statement. The other three follow similarly, the latter two with Taylor expansions. For $\sigma$ sufficiently small we have $\abs{\frac{o(\sigma^k)}{\sigma^k}}\le C$ independent of $\sigma$. Hence

\begin{eqnarray*}
\abs{\frac{1}{\sigma^{k+1}}\int_0^\sigma o(\tau^k)(x)d\tau} & \le & \frac{1}{\sigma^{k+1}} \int_0^\sigma C\tau^k d\tau\\
 & = & \frac{C}{k+1}.
\end{eqnarray*}

And for $\epsilon>0$ and almost any $x\in M$, we can find $\sigma_0>0$ so that $\sigma<\sigma_0$ implies $\frac{1}{\sigma^k}o(\sigma^k)(x) < \epsilon$. So for $\sigma < \sigma_0$ we compute

\begin{eqnarray*}
\frac{1}{\sigma^{k+1}}\int_0^\sigma o(\tau^k)(x) d\tau & < & \frac{1}{\sigma^{k+1}}\int_0^\sigma \epsilon\tau^kd\tau\\
 & = & \frac{\epsilon}{k+1}.
\end{eqnarray*}

Since $\frac{1}{\sigma^{k+1}}\int_0^\sigma o(\tau^k)d\tau$ is bounded in $L^\infty$ independent of $\sigma$ and tends to $0$ almost everywhere, $\int_0^\sigma o(\tau^k)d\tau = o(\sigma^{k+1})$.
\end{proof}

The way in which most of the $o(\sigma^k)$ error terms will arise is by pulling non-smooth functions out of integrals. The following propositioni allows us to do this.

\begin{proposition}\label{lebesgue-little o}
For bounded functions $f$ and $\phi$ on a domain $\Omega\subset M$, integrating in normal coordinates around $x_0\in\Omega$ we have for $\sigma < d(x_0,\partial \Omega)$

\[
\int_{B_\sigma(x_0)}f(x)\phi(x)dx = \phi(x_0)\int_{B_\sigma(x_0)}f(x)dx + o(\sigma^n)\sup_{B_\sigma(x_0)}\abs{f}.
\]
\end{proposition}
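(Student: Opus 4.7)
The plan is to rewrite the difference on the two sides as a single integral against the oscillation of $\phi$, and then use the Lebesgue differentiation theorem to conclude that this oscillation integral is $o(\sigma^n)$ in the sense defined above.

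Concretely, the first step is to bring $\phi(x_0)$ inside the integral and use linearity to write
\[
\int_{B_\sigma(x_0)}f(x)\phi(x)\,dx - \phi(x_0)\int_{B_\sigma(x_0)}f(x)\,dx = \int_{B_\sigma(x_0)}f(x)\bigl(\phi(x)-\phi(x_0)\bigr)\,dx.
\]
Pulling $\sup_{B_\sigma(x_0)}\abs{f}$ out, it suffices to show that
\[
E(x_0,\sigma) := \int_{B_\sigma(x_0)}\abs{\phi(x)-\phi(x_0)}\,dx
\]
is an $o(\sigma^n)$ term in the sense of the definition preceding Lemma~\ref{integrate-little o}; since the claimed bound is an equality up to an $o(\sigma^n)\sup\abs{f}$ error, replacing $f(\phi-\phi(x_0))$ by $\abs{f}\cdot\abs{\phi-\phi(x_0)}$ with a sign absorbed into the error is exactly what is needed.

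Next I would check the two requirements in the definition of $o(\sigma^n)$ for $E(x_0,\sigma)$. The uniform $L^\infty$ bound $E(x_0,\sigma)\le C\sigma^n$ is immediate from $\abs{\phi(x)-\phi(x_0)}\le 2\norm{\phi}_{L^\infty(\Omega)}$ and $\abs{B_\sigma(x_0)}\le C\sigma^n$ in normal coordinates. For the almost-everywhere vanishing of $\sigma^{-n}E(x_0,\sigma)$, I would invoke the Lebesgue differentiation theorem applied to the bounded measurable function $\phi$: at every Lebesgue point $x_0$ of $\phi$ (and the complement has measure zero),
\[
\frac{1}{\abs{B_\sigma(x_0)}}\int_{B_\sigma(x_0)}\abs{\phi(x)-\phi(x_0)}\,dx \;\longrightarrow\; 0
\]
as $\sigma\to 0$. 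Combining with $\abs{B_\sigma(x_0)}=\omega_n\sigma^n(1+O(\sigma^2))$ from Proposition~\ref{metric expansion} (or just $\abs{B_\sigma(x_0)}\le C\sigma^n$) yields $\sigma^{-n}E(x_0,\sigma)\to 0$ a.e.

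The only subtle point, and the only one that requires any care, is that the Lebesgue differentiation theorem is typically stated for Euclidean balls with Lebesgue measure, but here $B_\sigma(x_0)$ means a geodesic ball with Riemannian measure. Since we work in normal coordinates about $x_0$, the geodesic ball coincides with the Euclidean coordinate ball, and $d\mu = \sqrt{g}\,dx$ with $\sqrt{g}=1+O(\abs{x}^2)$ is comparable to $dx$; either reformulation of the theorem (in Euclidean coordinates with bounded density, or directly using that a doubling metric measure space admits Lebesgue differentiation) gives the desired a.e.\ statement. This is the main piece of actual analysis; the rest is bookkeeping, and combining it with the $L^\infty$ bound above shows $E(x_0,\sigma)=o(\sigma^n)$, completing the proof.
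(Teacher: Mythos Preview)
Your proposal is correct and follows essentially the same approach as the paper: bound the difference by $\sup_{B_\sigma}\abs{f}\int_{B_\sigma}\abs{\phi(x)-\phi(x_0)}\,dx$, then verify the two conditions in the definition of $o(\sigma^n)$ for this oscillation integral using boundedness of $\phi$ for the uniform estimate and the Lebesgue differentiation theorem for the almost-everywhere decay. Your extra remark about Euclidean versus Riemannian balls is harmless but unnecessary here, since the statement already specifies integration with respect to $dx$ in normal coordinates, where the geodesic ball coincides with the Euclidean ball.
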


\begin{proof}
First observe
\[
\abs{\int_{B_\sigma(x_0)}f(x)\phi(x)dx - \int_{B_\sigma(x_0)}f(x)\phi(x_0)dx} \le \sup_{B_\sigma(x_0)}\abs{f}\int_{B_\sigma(x_0)}\abs{\phi(x)-\phi(x_0)}dx.
\]

Define $\phi^\sigma(x_0) = \int_{B_\sigma(x_0)}\abs{\phi(x)-\phi(x_0)}dx$. At a Lebesgue point $x_0$ for $\phi$, we have $\frac{1}{\sigma^n}\phi^\sigma(x_0)\to 0$. This is the almost everywhere convergence condition.

Since $\phi\in L^\infty$, there is some $K>0$ with $\abs{\phi}\le K$, so $\phi^\sigma \le 2K\omega_n\sigma^n$. This gives the $L^\infty$ bound. Hence $\phi^\sigma = o(\sigma^n)$, so we can write

\[
\int_{B_\sigma}f(x)\phi(x)dx = \int_{B_\sigma}f(x)\phi(0)dx + o(\sigma^n)\sup_{B_\sigma}\abs{f}.
\]
\end{proof}

\section{Domain and Target Variation Formulas and Lipschitz Continuity}\label{Sdomain}

We will now derive the formula from \cite{gromov-schoen}, taking special care with terms involving derivatives of metric information, as these will be essential to the rest of our work.

\begin{proposition}
For a Riemannian manifold $M$, an NPC metric space $X$, and a harmonic map $u:M\to X$, we have for almost every $x_0\in M$ (writing $B_\sigma$ for $B_\sigma(x_0)$),

\begin{align}
(2-n)\int_{B_\sigma} \abs{\nabla u}^2 d\mu & + \sigma\int_{\partial B_\sigma} \left[ \abs{\nabla u}^2 - 2 \abs{\frac{\partial u}{\partial r}}^2\right] d\Sigma\nonumber\\
& = \int_{B_\sigma} \left[ x^k\frac{\partial g^{ij}}{\partial x^k} \frac{\partial u}{\partial x^i}\cdot\frac{\partial u}{\partial x^j}\sqrt{g} + \abs{\nabla u}^2 x^k\frac{\partial\sqrt{g}}{\partial x^k}\right]dx.\label{monotonicity-gs}
\end{align}
\end{proposition}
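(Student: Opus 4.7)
The strategy is to exploit the stationarity of the energy under compactly supported domain variations. Because $u$ is energy-minimizing among Korevaar--Schoen maps that agree with it off a compact set, for any smooth family of diffeomorphisms $F_t : M \to M$ with $F_0 = \mathrm{id}$ and $F_t = \mathrm{id}$ outside a compact set, the competitor $u \circ F_t^{-1}$ satisfies $E(u \circ F_t^{-1}) \ge E(u)$ for $t$ in a neighborhood of $0$, forcing $\frac{d}{dt}\big|_{t=0} E(u \circ F_t^{-1}) = 0$. I would work in normal coordinates $\{x^i\}$ about a Lebesgue point $x_0$ of $\abs{\nabla u}^2$ and $\pi$, and take $F_t$ to be the flow of the radial vector field $X^k = \phi(\abs{x})\, x^k$, with $\phi$ a Lipschitz cutoff equal to $1$ on $[0,\sigma]$, interpolating linearly to $0$ on $[\sigma, \sigma+\eta]$, and vanishing beyond.

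After the change of variables $y = F_t(x)$, the energy can be written
\[
E(u \circ F_t^{-1}) = \int g^{ij}(F_t(x))\, \pi_{\alpha\beta}(x)\, (A_t^{-1})^\alpha_i (A_t^{-1})^\beta_j\, \sqrt{g(F_t(x))}\, \abs{\det A_t}\, dx,
\]
where $A_t = dF_t(x)$. Differentiating at $t=0$ with $\dot F_0 = X$ and $\dot A_0 = dX$ produces four contributions: $X^k \partial_k g^{ij}\, \pi_{ij}$ from $g^{ij}(F_t)$; $-2\, g^{ij}\pi_{\alpha j}\, \partial_i X^\alpha$ from the two $(A_t^{-1})$-derivative terms combined via the symmetry of $\pi$; $\abs{\nabla u}^2 X^k \partial_k \sqrt{g}$ from $\sqrt{g(F_t)}$; and $\abs{\nabla u}^2 \sqrt{g}\, \partial_k X^k$ from the Jacobian. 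Inserting $\partial_i X^\alpha = \phi'(\abs{x})\, x^i x^\alpha/\abs{x} + \phi\, \delta^\alpha_i$ and $\partial_k X^k = \phi'(\abs{x})\abs{x} + n\phi$, and exploiting the Gauss-lemma identity $g^{ij}(x) x^j = x^i$ to simplify $g^{ij}\pi_{\alpha j} x^i x^\alpha = \pi_{\alpha j} x^\alpha x^j = \abs{x}^2 \abs{\frac{\partial u}{\partial r}}^2$, the full integrand reorganizes as
\[
\phi(\abs{x}) \left[(n-2)\abs{\nabla u}^2 \sqrt{g} + x^k \partial_k g^{ij}\, \pi_{ij}\sqrt{g} + \abs{\nabla u}^2 x^k \partial_k \sqrt{g}\right] + \phi'(\abs{x})\abs{x}\left(\abs{\nabla u}^2 - 2\abs{\frac{\partial u}{\partial r}}^2\right)\sqrt{g},
\]
which must integrate against $dx$ to zero.

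Finally, sending $\eta \to 0$ makes $\phi \to \chi_{B_\sigma}$, and the coarea formula converts the $\phi'$ contribution to $-\sigma \int_{\partial B_\sigma}(\abs{\nabla u}^2 - 2 \abs{\frac{\partial u}{\partial r}}^2)\, d\Sigma$, using that $\sqrt{g}\, dS = d\Sigma$ on coordinate spheres (a consequence of Gauss's lemma, since $d\mu = dr \wedge d\Sigma_r$ in normal coordinates). Rearranging the resulting identity reproduces the stated formula. The main obstacle is sign bookkeeping: the factor $-2$ from $\frac{d}{dt}\big|_{t=0}(A_t^{-1})^\alpha_i = -\partial_i X^\alpha$ must combine with the negative distributional limit $\phi' \to -\delta(\cdot - \sigma)$ at $\abs{x} = \sigma$ to yield the boundary integrand $\abs{\nabla u}^2 - 2 \abs{\frac{\partial u}{\partial r}}^2$ with precisely the displayed sign---the very sign the paper highlights as differing from the expression printed in \cite{gromov-schoen}. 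The ``almost every $x_0$'' qualifier accommodates the Lebesgue-point hypothesis on $\pi$ and $\abs{\nabla u}^2$ needed to make sense of the pointwise boundary values when passing to the limit $\eta \to 0$.
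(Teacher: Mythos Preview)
Your proposal is correct and follows essentially the same domain-variation argument as the paper: both compute the first variation of energy under a radial reparametrization in normal coordinates, differentiate, invoke the Gauss-lemma identity $g^{ij}x^j=x^i$ to simplify the radial term, and pass the cutoff to $\chi_{B_\sigma}$. The only cosmetic differences are that the paper varies by $u_t=u\circ F_t$ with $F_t(x)=(1+t\eta(x))x$ for a general smooth $\eta$ (changing variables afterward so that $\pi_u$ sits at the integration point) while you vary by $u\circ F_t^{-1}$ with a specific Lipschitz radial cutoff; at the level of the first variation these are equivalent.
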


\begin{remark}
Note that this formula corrects a sign error appearing in \cite{gromov-schoen}. The change to one term on the right hand side of the above formula, while essential to the present work, does not affect the remainder of the arguments in \cite{gromov-schoen} as they only require these terms to be $O(\sigma^2)\int_{B_\sigma}\abs{\nabla u}^2d\mu$.
\end{remark}

\begin{proof}
These computations will be done in normal coordinates around $x_0 = 0$. Consider a family of maps $u_t:M\to X$ as follows: Let $\eta$ be a smooth test function supported near $0$, and let $F_t(x) = (1+t\eta(x))x$. Define $u_t(x) = u\circ F_t(x)$.

\cite{korevaar-schoen1} describes how to relate the pull-back tensor of $u_t$ with that of $u$ by the formula
\[
(\pi_{u_t})_{ij}(x) = \frac{\partial F_t^\alpha}{\partial x^i}(x)\frac{\partial F_t^\beta}{\partial x^j}(x)(\pi_u)_{\alpha\beta}(F_t(x)).
\]
This formula generalizes the chain rule for smooth functions. Now we consider the energy of the map $u_t$:

\begin{eqnarray*}
E(u_t) & = & \int {\nabla u_t}^2 d\mu\\
 & = & \int g^{ij}(x)(\pi_{u_t})_{ij}(x)d\mu\\
 & = & \int g^{ij}(x)(\pi_u)_{\alpha\beta}(F_t(x)) \left[(1+t\eta(x))\delta_i^\alpha + tx^\alpha\frac{\partial\eta}{\partial x^i}(x)\right]\\
 & & \times\left[(1+t\eta(x))\delta_j^\beta + tx^\beta\frac{\partial\eta}{\partial x^j}(x)\right]d\mu\\
 & = & \int g^{ij}(x)\left[(1+2t\eta(x))(\pi_u)_{ij}(F_t(x)) + 2tx^k\frac{\partial\eta}{\partial x^i}(x)(\pi_u)_{jk}(F_t(x))\right] d\mu + O(t^2)\\
 & = & O(t^2) + \int g^{ij}(F_t^{-1}(x)) \Bigg[ (1+2t\eta(F_t^{-1}(x))) (\pi_u)_{ij}(x)\\
 & & + 2t(F_t^{-1})^k(x) \frac{\partial\eta}{\partial x^i}(F_t^{-1}(x)) (\pi_u)_{jk}(x) \Bigg] \abs{det(dF_t^{-1}(x))}\sqrt{g(F_t^{-1}(x))}dx.
\end{eqnarray*}

This last line comes from the change of variables formula, which is valid even in the presence of the $L^1$ tensor $\pi$. First compute

\begin{eqnarray*}
\frac{\partial F_t^i}{\partial x^j} & = & (1+t\eta(x))\delta^i_j + t\frac{\partial \eta}{\partial x^j}x^i\\
det(dF_t) & = & 1 + nt\eta(x) + tx^k\frac{\partial\eta}{\partial x^k} + O(t^2)\\
det(dF_t^{-1}) & = & 1 - nt\eta - tx^k\frac{\partial\eta}{\partial x^k} + O(t^2).
\end{eqnarray*}

We will need the following computation, for smooth functions $f$:

\[
\frac{\partial}{\partial t} f\circ F_t^{-1}(x) = \frac{\partial f}{\partial x^k}(F_t^{-1}(x))\frac{\partial (F_t^{-1})^k}{\partial t}(x).
\]

Since $F_t(x) = (1+t\eta(x))x$, there is some function $\rho$ so that $F_t^{-1}(x) = \rho(t,x)x$. Then $x = F_t(F_t^{-1}(x)) = (1+t\eta(\rho(t,x)x))\rho(t,x)x = x$. Hence $(1+t\eta(\rho(t,x)x))\rho(t,x) = 1$ and so taking $\frac{\partial}{\partial t}$ we have

\[
0 = \eta(\rho(t,x)x)\rho(t,x) + (1+t\eta(\rho(t,x)x))\frac{\partial\rho}{\partial t}.
\]

At $t=0$, $\rho(0,x) = 1$ since $F_0$ is the identity, so $\frac{\partial\rho}{\partial t}\big|_{t=0} = -\eta(x)$. Now $(F_t^{-1})^k(x) = \rho(t,x)x^k$, so we have the following formula for smooth functions $f$ (which we will apply for $f = \sqrt{g}$ and $f = g^{ij}$):

\[
\frac{\partial}{\partial t}\big|_{t=0} f\circ F_t^{-1}(x) = -\eta(x)x^k\frac{\partial f}{\partial x^k}(x).
\]

Now we can take the derivative of $E(u_t)$ at $t=0$ and pass the derivative under the integral, writing $\pi$ for $\pi_u$. Since $u$ is harmonic, the derivative is $0$:

\begin{eqnarray*}
0 & = & \frac{d}{dt}\big|_{t=0} E(u_t)\\
 & = & -\int \eta x^k\frac{\partial g^{ij}}{\partial x^k}\pi_{ij}d\mu + \int g^{ij} \left[ 2\eta\pi_{ij} + 2x^k \frac{\partial\eta}{\partial x^i}\pi_{jk}\right] d\mu\\
 & & - \int g^{ij}\pi_{ij} \left(n\eta + x^k\frac{\partial\eta}{\partial x^k}\right) d\mu - \int g^{ij}\pi_{ij}\eta x^k\frac{\partial\sqrt{g}}{\partial x^k}dx.
\end{eqnarray*}
In other words,








\begin{eqnarray*}
(2-n)\int \eta\abs{\nabla u}^2d\mu & + & \int \left[2g^{ij}x^k\frac{\partial\eta}{\partial x^i}\pi_{jk} - x^k\frac{\partial\eta}{\partial x^k}\abs{\nabla u}^2\right]d\mu\\
 & = & \int \eta\left[\pi_{ij}x^k\frac{\partial g^{ij}}{\partial x^k}\sqrt{g} + \abs{\nabla u}^2 x^k\frac{\partial \sqrt{g}}{\partial x^k}\right]dx.
\end{eqnarray*}

Let now $\eta$ approach the characteristic function of a ball $B_\sigma$ about $0$. The partial derivatives $\frac{\partial\eta}{\partial x^k}$ approach $-\frac{x^k}{\abs{x}}$ times a $\delta$-distribution on the sphere $\partial B_\sigma$. So the above formula becomes

\begin{eqnarray*}
(2-n)\int_{B_\sigma}\abs{\nabla u}^2 d\mu & + & \int_{\partial B_\sigma} \left[ \frac{x^kx^k}{\abs{x}}\abs{\nabla u}^2 - 2g^{ij}\frac{x^kx^i}{\abs{x}}\pi_{jk}\right]d\Sigma\\
 & = & \int_{B_\sigma}\left[\pi_{ij}x^k\frac{\partial g^{ij}}{\partial x^k}\sqrt{g} + \abs{\nabla u}^2x^k\frac{\partial\sqrt{g}}{\partial x^k}\right]dx.
\end{eqnarray*}

Now note in the second integral on the left that $x^kx^k =\abs{x}^2 = \sigma^2$. From \cite{g-j-w} equation $(2.8)$ says $x^ig_{ij}=x^j$, so $g^{ij}x^ix^k\pi_{jk} = x^jx^k\pi_{jk}$. And $x^jx^k\pi_{jk} = \pi(x^j\partial_j,x^k\partial_k) = \pi\left( r\frac{\partial}{\partial r},r\frac{\partial}{\partial r}\right) = r^2\abs{\frac{\partial u}{\partial r}}^2$. Hence

\begin{eqnarray*}
(2-n)\int_{B_\sigma}\abs{\nabla u}^2 d\mu & + & \sigma\int_{\partial B_\sigma}\left[\abs{\nabla u}^2 - 2\abs{\frac{\partial u}{\partial r}}^2\right]d\Sigma\\
 & = & \int_{B_\sigma}\left[\pi_{ij}x^k\frac{\partial g^{ij}}{\partial x^k}\sqrt{g} + \abs{\nabla u}^2x^k\frac{\partial\sqrt{g}}{\partial x^k}\right]dx.
\end{eqnarray*}
\end{proof}

For the rest of our work, we will need a more precise expansion of the terms on the right hand side of this formula.

\begin{lemma} \label{monotonicity1}
Let $M$ be a Riemannian manifold, $X$ an NPC metric space, and $u:M\to X$ a harmonic map. For almost every $x_0\in M$,

\begin{align*}
(2-n) & \int_{B_\sigma(x_0)} \abs{\nabla u}^2 d\mu + \sigma\int_{\partial B_\sigma(x_0)} \left[ \abs{\nabla u}^2 - 2 \abs{\frac{\partial u}{\partial r}}^2\right] d\Sigma\\
 & = \frac{\omega_n\left(2Ric:\pi(x_0) - S(x_0)\abs{\nabla u}^2(x_0)\right)}{3(n+2)}\sigma^{n+2} + o(\sigma^{n+2}).
\end{align*}
Here $Ric:\pi$ denotes the quantity $g^{ik}g^{j\ell}\pi_{ij}R_{k\ell}$, which at the center of normal coordinates becomes $\pi_{ij}Ric_{ij}$. If $\abs{\nabla u}^2(x_0)\neq 0$, then
\begin{align*}
(2-n) & \int_{B_\sigma(x_0)} \abs{\nabla u}^2 d\mu + \sigma\int_{\partial B_\sigma(x_0)} \left[ \abs{\nabla u}^2 - 2 \abs{\frac{\partial u}{\partial r}}^2\right] d\Sigma\\
 & = \left(\frac{1}{3(n+2)}\left(\frac{2Ric:\pi(x_0)}{\abs{\nabla u}^2(x_0)} - S(x_0)\right)\sigma^2 + o(\sigma^2)\right)\int_{B_\sigma(x_0)}\abs{\nabla u}^2d\mu.
\end{align*}
\end{lemma}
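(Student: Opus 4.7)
The plan is to start from identity~(\ref{monotonicity-gs}) of the previous proposition and Taylor-expand the purely geometric factors on its right hand side using Proposition~\ref{metric expansion}, then pull the non-smooth factors $\pi_{ij}$ and $\abs{\nabla u}^2$ outside the integral at a Lebesgue point via Proposition~\ref{lebesgue-little o}, and finally evaluate the resulting integrals of quadratic forms with Proposition~\ref{quadratic form}.

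More concretely, since $g_{ij} = \delta_{ij} - \frac{1}{3} R_{kij\ell}(0) x^k x^\ell + O(\abs{x}^3)$, matrix inversion gives $g^{ij}(x) = \delta^{ij} + \frac{1}{3} R_{kij\ell}(0) x^k x^\ell + O(\abs{x}^3)$ (with indices raised by $\delta$ to this order). Euler's identity applied to the homogeneous quadratic pieces yields
\[
x^k \frac{\partial g^{ij}}{\partial x^k}(x) = \tfrac{2}{3} R_{kij\ell}(0) x^k x^\ell + O(\abs{x}^3), \qquad x^k \frac{\partial \sqrt{g}}{\partial x^k}(x) = -\tfrac{1}{3} R_{k\ell}(0) x^k x^\ell + O(\abs{x}^3),
\]
and the extra factor $\sqrt{g} = 1 + O(\abs{x}^2)$ in the first integrand only contributes an $O(\abs{x}^4)$ correction. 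Substituting into the right hand side of~(\ref{monotonicity-gs}) leaves
\[
\int_{B_\sigma} \Bigl[\tfrac{2}{3} R_{kij\ell}(0) x^k x^\ell \, \pi_{ij}(x) \;-\; \tfrac{1}{3} R_{k\ell}(0) x^k x^\ell \, \abs{\nabla u}^2(x)\Bigr] dx + O(\sigma^{n+3}).
\]

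At any simultaneous Lebesgue point $x_0$ of the bounded functions $\pi_{ij}$ and $\abs{\nabla u}^2$ (by Korevaar-Schoen Lipschitz regularity both are in $L^\infty$, so a.e.\ $x_0$ qualifies), Proposition~\ref{lebesgue-little o} applied with $f(x) = x^k x^\ell$ replaces each non-smooth factor by its value at $x_0$ with an error of size $o(\sigma^n) \cdot \sigma^2 = o(\sigma^{n+2})$. Part~\ref{quadratic in ball} of Proposition~\ref{quadratic form} then evaluates $\int_{B_\sigma} x^k x^\ell dx = \frac{\omega_n}{n+2}\delta^{k\ell}\sigma^{n+2}$, and the contractions
\[
\delta^{k\ell} R_{kij\ell}(0) = R_{ij}(0), \qquad \delta^{k\ell} R_{k\ell}(0) = S(x_0),
\]
collapse the curvature symbols to the Ricci and scalar curvatures, producing exactly the first formula.

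The second formula is then a routine rewriting: by Lebesgue differentiation (equivalently Proposition~\ref{lebesgue-little o} with $\phi = \abs{\nabla u}^2$ and $f \equiv 1$) one has $\int_{B_\sigma} \abs{\nabla u}^2 d\mu = \omega_n \abs{\nabla u}^2(x_0)\sigma^n + o(\sigma^n)$ at almost every $x_0$, so when $\abs{\nabla u}^2(x_0) \neq 0$ this quantity is nonzero for small $\sigma$ and dividing it into the first formula gives the stated multiplicative expression. The main obstacle is purely bookkeeping: one must verify that the three distinct sources of error---the $O(\abs{x}^3)$ geometric remainders paired with bounded factors, the Lebesgue $o$-errors from extracting $\pi_{ij}$ and $\abs{\nabla u}^2$ from the integrand, and the smooth corrections from converting between $dx$ and $d\mu = \sqrt{g}\,dx$---all assemble into the single error term $o(\sigma^{n+2})$.
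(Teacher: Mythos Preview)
Your proposal is correct and follows essentially the same approach as the paper: expand the geometric factors via Proposition~\ref{metric expansion}, extract $\pi_{ij}$ and $\abs{\nabla u}^2$ at a Lebesgue point via Proposition~\ref{lebesgue-little o}, and evaluate the resulting quadratic integrals via Proposition~\ref{quadratic form}. The only cosmetic difference is that the paper first diagonalizes $\pi(0)$ by an orthonormal change of basis and then invokes the special cases \eqref{integrate ricci} and \eqref{integrate sectional}, whereas you contract $\int_{B_\sigma} x^k x^\ell\,dx = \frac{\omega_n}{n+2}\delta^{k\ell}\sigma^{n+2}$ directly against the curvature symbols; your route is a slight streamlining of the same computation.
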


\begin{proof}
This result follows from taking a closer look at the terms on the right hand side in \eqref{monotonicity-gs}. We will first expand the terms that come from the domain, those that do not involve the tensor $\pi$. Fix $x_0\in M$, a Lebesgue point for all the $\pi_{ij}$ and $\abs{\nabla u}^2$, and identify it with $0\in T_{x_0}M$ via exponential coordinates. Recall the Taylor expansion for the metric and the volume density from Proposition~\ref{metric expansion} and compute:

\begin{eqnarray*}
x^k\frac{\partial}{\partial x^k}g^{ij} 
 & = & \frac{2}{3} \langle R_0(\partial_i,x)x,\partial_j\rangle + O(\abs{x}^3)\\
x^k\frac{\partial\sqrt{g}}{\partial x^k} 
 & = & -\frac{1}{3}Ric_0(x,x) + O(\abs{x}^3).
\end{eqnarray*}

Combining all the $O(\abs{x}^3)$ terms and recalling that $\abs{\pi_{ij}}\le\abs{\nabla u}^2$, we may now rewrite the right hand side of \eqref{monotonicity-gs} as

\[
\frac{2}{3}\int_{B_\sigma}\pi_{ij}\langle R_0(\partial_i,x)x,\partial_j \rangle dx - \frac{1}{3}\int_{B_\sigma}\abs{\nabla u}^2 Ric_0(x,x) dx + O(\sigma^3)\int_{B_\sigma}\abs{\nabla u}^2dx.
\]

Since $\abs{\nabla u}^2$ is in $L^\infty_{loc}$ (because $u$ is locally Lipschitz), it is in $L^\infty(\Omega)$ for some $\Omega\subset M$ compact, and all the quadratic curvature terms are smooth and $O(\abs{x}^2)$, so Proposition~\ref{lebesgue-little o} lets us write the above as
\[
\frac{2}{3}\pi_{ij}(0)\int_{B_\sigma}\langle R_0(\partial_i,x)x,\partial_j\rangle dx - \frac{1}{3}\abs{\nabla u}^2(0)\int_{B_\sigma}Ric_0(x,x)dx + o(\sigma^{n+2}).
\]
Note that $\int_{B_\sigma}\abs{\nabla u}^2d\mu$ is $O(\sigma^n)$, and so $O(\sigma^3)\int_{B_\sigma}\abs{\nabla u}^2dx$ is $O(\sigma^{n+3})$, which is in particular $o(\sigma^{n+2})$, so it has been absorbed into that term.

Now choose an orthonormal change of basis to diagonalize $\pi(0)$, so that $\pi_{ij}(0) = \lambda_i\delta_{ij}$. Now the right hand side of \eqref{monotonicity-gs} can be further rewritten
\[
\frac{2}{3}\lambda_i\int_{B_\sigma}\langle R_0(\partial_i,x)x,\partial_i\rangle dx - \frac{1}{3}\abs{\nabla u}^2(0)\int_{B_\sigma}Ric_0(x,x)dx + o(\sigma^{n+2}).
\]
Equations \eqref{integrate ricci} and \eqref{integrate sectional} after Proposition~\ref{quadratic form} let us integrate these curvature terms. The above becomes
\[
\frac{\omega_n}{3(n+2)}\left(2Ric:\pi(0) - S(0)\abs{\nabla u}^2(0)\right)\sigma^{n+2}+o(\sigma^{n+2}).
\]
Hence \eqref{monotonicity-gs} can be written

\begin{align*}
(2-n) & \int_{B_\sigma(x_0)} \abs{\nabla u}^2 d\mu + \sigma\int_{\partial B_\sigma(x_0)} \left[ \abs{\nabla u}^2 - 2 \abs{\frac{\partial u}{\partial r}}^2\right] d\Sigma\\
 & = \frac{\omega_n}{3(n+2)}\sigma^{n+2}\left(2\frac{Ric:\pi(x_0)}{\abs{\nabla u}^2(x_0)}-S(x_0)\right)\abs{\nabla u}^2(x_0) + o(\sigma^{n+2}).
\end{align*}

Since $x_0$ is a Lebesgue point for $\abs{\nabla u}^2$, we have

\begin{eqnarray*}
\int_{B_\sigma(x_0)}\abs{\nabla u}^2d\mu & = & \abs{\nabla u}^2(x_0)Vol(B_\sigma) + o(\sigma^n)\\
 & = & \abs{\nabla u}^2(x_0)\omega_n\sigma^n + o(\sigma^n).
\end{eqnarray*}
The $o(\sigma^n)$ error term in the first line is integrable, and small, by the same reasoning as in Proposition~\ref{lebesgue-little o}. So as long as $\abs{\nabla u}^2(x_0)\neq 0$, we can write
\[
\left(\int_{B_\sigma(x_0)}\abs{\nabla u}^2d\mu\right)^{-1} = \frac{1}{\omega_n\sigma^n\abs{\nabla u}^2(x_0)}(1+o(1)).
\]

So around points with $\abs{\nabla u}^2\neq 0$ \eqref{monotonicity-gs} can be further written

\begin{align*}
(2-n) & \int_{B_\sigma(x_0)} \abs{\nabla u}^2 d\mu + \sigma\int_{\partial B_\sigma(x_0)} \left[ \abs{\nabla u}^2 - 2 \abs{\frac{\partial u}{\partial r}}^2\right] d\Sigma\\
 & = \left(\frac{1}{3(n+2)}\left(\frac{2Ric:\pi(x_0)}{\abs{\nabla u}^2(x_0)} - S(x_0)\right)\sigma^2 + o(\sigma^2)\right)\int_{B_\sigma(x_0)}\abs{\nabla u}^2d\mu.
\end{align*}
\end{proof}

The following proposition comes from variation of the map $u$ on the target, rather than on the domain. In \cite{gromov-schoen} this is proved using the chain rule for maps into Riemannian simplicial complexes. In our more general setting, we do not have tools quite as strong as the chain rule, so we resort to the triangle comparison principle.

\begin{proposition}\label{target variation}
For a harmonic map $u:M\to X$ into an NPC space $X$ and for any $Q\in X$ we have in the weak sense

\[
\frac{1}{2}\Delta d^2(u,Q) \ge \abs{\nabla u}^2.
\]
\end{proposition}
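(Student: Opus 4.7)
The plan is to extract the weak inequality from the energy-minimizing property of $u$ combined with the convexity of $d^2$ along geodesics in NPC spaces. For a nonnegative smooth test function $\phi$ compactly supported on $M$ and small $s\ge 0$, I introduce a target variation $u_s(x)=\gamma_x(s\phi(x))$, where $\gamma_x:[0,1]\to X$ is the constant-speed geodesic from $u(x)$ to $Q$. Thus $u_0=u$, $u_s$ agrees with $u$ outside the support of $\phi$, and $u_s$ moves $u$ toward $Q$ in proportion to $s\phi$.

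The central step is a pointwise upper bound on $d^2(u_s(x),u_s(y))$, obtained by two applications of the NPC convexity inequality
$$d^2(\gamma(t),R)\le (1-t)d^2(\gamma(0),R)+t\,d^2(\gamma(1),R)-t(1-t)d^2(\gamma(0),\gamma(1))$$
valid along any NPC geodesic $\gamma$: first along $\gamma_x$ at parameter $s\phi(x)$ with reference point $u_s(y)$, and then along $\gamma_y$ at parameter $s\phi(y)$ with reference point $u(x)$. Combined with the identity $d(u_s(y),Q)=(1-s\phi(y))d(u(y),Q)$, expanding to first order in $s$ yields
\begin{align*}
d^2(u_s(x),u_s(y))\le\ & d^2(u(x),u(y))-s[\phi(x)+\phi(y)]\,d^2(u(x),u(y))\\
&+s[\phi(y)-\phi(x)][d^2(u(x),Q)-d^2(u(y),Q)]+O(s^2).
\end{align*}
Dividing by $d^2(x,y)$ and passing to the Korevaar--Schoen directional energy density as $y\to x$ along an orthonormal frame, and then tracing, this becomes
$$\abs{\nabla u_s}^2 \le (1-2s\phi)\abs{\nabla u}^2 - s\la \nabla\phi,\nabla d^2(u,Q)\ra + O(s^2),$$
where $d^2(u,Q)$ denotes the Lipschitz function $x\mapsto d^2(u(x),Q)$ on $M$.

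Integrating this estimate, using that $u$ minimizes energy among competitors agreeing with $u$ outside the support of $\phi$ (so $E(u_s)\ge E(u)$), dividing by $s>0$, and letting $s\to 0^+$ gives
$$\int_M \la\nabla\phi,\nabla d^2(u,Q)\ra\,d\mu + 2\int_M \phi\abs{\nabla u}^2\,d\mu \le 0,$$
which, after integration by parts, is precisely the weak inequality $\frac{1}{2}\Delta d^2(u,Q)\ge \abs{\nabla u}^2$. The principal obstacle is making rigorous the passage from the pointwise bound on $d^2(u_s(x),u_s(y))$ to the upper bound on the Korevaar--Schoen energy density $\abs{\nabla u_s}^2$: one must control the $O(s^2)$ remainder uniformly using the local Lipschitz estimate on $u$ established earlier in the section and the smoothness of $\phi$ and Lipschitz continuity of $d^2(u,Q)$, so that the upper bounds on the approximate energies pass to the limiting energy densities.
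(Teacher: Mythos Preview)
Your argument is correct and follows essentially the same route as the paper: define the geodesic homotopy $u_s$ toward $Q$, use the NPC comparison to bound $d^2(u_s(x),u_s(y))$ to first order in $s$, pass to the directional energy density to obtain $\abs{\nabla u_s}^2 \le (1-2s\phi)\abs{\nabla u}^2 - s\la\nabla\phi,\nabla d^2(u,Q)\ra + O(s^2)$, and then invoke energy minimality. The only cosmetic difference is that the paper constructs an explicit comparison triangle in $\R^2$ and differentiates the comparison map there, whereas you invoke the equivalent quadratic convexity inequality for $d^2$ along geodesics twice; both yield the same first-order expansion and the same conclusion.
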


\begin{proof}
Pick a point $x\in M$ and a tangent direction $v\in T_xM$ with $\abs{v}=1$. For $\epsilon>0$ small let $x+\epsilon v$ denote the image of $\epsilon v$ under the exponential map $exp_x$. For the points $Q$, $u(x)$ and $u(x+\epsilon v)$ in $X$, let the points $0$, $\tu(x)$ and $\tu(x+\epsilon v)$ in $\R^2$ be a comparison triangle. That is, the distances between the three points in $X$ coincides with the corresponding distances in $\R^2$.

Extend the map $\tu$ to the geodesic between $x$ and $x+\epsilon v$ by the formula
\[
\tu(x+s\epsilon v) = (1-s)\tu(x) + s\tu(x+\epsilon v).
\]
Take a smooth test function $\eta$ supported in a neighborhood of $x$ and define a one-parameter family of maps $u_t:M\to X$ defined by setting $u_t(y)$ to be the point that is a fraction $1-t\eta(y)$ of the way along the geodesic from $Q$ to $u(y)$. Similarly, along the geodesic from $x$ to $x+\epsilon v$ define

\[
\tu_t(y) = (1-t\eta(y))\tu(y).
\]

Now we can compute the directional derivative of the map $\tu_t$ using the chain rule:

\[
v\cdot\nabla \tu_t (y) = (1-t\eta(y))v\cdot\nabla\tu(y) - t(v\cdot\nabla\eta(y))\tu(y).
\]
And we can compute the magnitude of this derivative:
\begin{eqnarray*}
\abs{v\cdot\nabla\tu_t(y)}^2 & = & (1-t\eta(y))^2\abs{v\cdot\nabla\tu(y)}^2 - 2t(v\cdot\nabla\eta(y))\tu(y)\cdot(v\cdot\nabla\tu(y)) + O(t^2)\\
 & = & (1-2t\eta(y))\abs{v\cdot\nabla\tu(y)}^2 - t(v\cdot\eta(y))(v\cdot\nabla\abs{\tu}^2(y)) + O(t^2).
\end{eqnarray*}

A simple computation shows
\[
v\cdot\nabla{u}(x) = \lim_{s\to 0} \frac{ \tu(x+s\epsilon v) -\tu(x)}{s\epsilon} = \frac{\tu(x+\epsilon v)-\tu(x)}{\epsilon}.
\]
And hence
\[
\abs{v\cdot \nabla\tu(x)}^2 = \frac{d^2(u(x+\epsilon v),u(x))}{\epsilon^2}.
\]
Also,

\[
v\cdot\nabla \abs{\tu}^2(x) = \lim_{s\to 0}\frac{\abs{\tu(x+sv)}^2 - \abs{\tu(x)}^2}{s} = \frac{\abs{\tu(x+\epsilon v)}^2 - \abs{\tu(x)}^2}{\epsilon} + o(1).
\]

Now we have

\begin{eqnarray*}
\abs{v\cdot\tu_t(x)}^2 & = & (1-2t\eta(x))\frac{d^2(u(x+\epsilon v),u(x))}{\epsilon^2}\\
 & & - t(v\cdot\nabla\eta(x))\left( \frac{d^2(u(x+\epsilon v),Q) - d^2(u(x),Q)}{\epsilon} + o(1)\right) + O(t^2).
\end{eqnarray*}
By the triangle comparison principle, we have
\[
\frac{d(u_t(x),u_t(x+\epsilon v))}{\epsilon^2} \le \frac{\abs{\tu(x)-\tu(x+\epsilon v)}^2}{\epsilon^2} = \abs{v\cdot \nabla\tilde{u}(x)}^2 + o(1).
\]
Taking $\epsilon\to 0$ yields
\[
\abs{(u_t)_*v}^2(x) \le (1-2t\eta(x))\abs{u_*v}^2(x) - t(v\cdot\nabla\eta(x))(v\cdot\nabla d^2(u,Q)(x)) + O(t^2).
\]
Averaging over the sphere of unit vectors at $x$, we get
\[
\abs{\nabla u_t}^2(x) \le (1-2t\eta(x))\abs{\nabla u}^2(x) - t(\nabla \eta\cdot\nabla d^2(u,Q))(x) + O(t^2).
\]
Finally, integrate over the domain, and recall that $u$ is minimizing:
\[
E(u)\le E(u_t) \le E(u) - 2t\int \eta\abs{\nabla u}^2 + t\int (\Delta\eta)d^2(u,Q) + O(t^2).
\]
Cancelling the $E(u)$, dividing by $t$ and sending $t\to0$ yields

\[
\int (\Delta\eta)d^2(u,Q) \ge 2\int \eta\abs{\nabla u}^2.
\]
\end{proof}

In particular, letting $\eta$ approach the characteristic function of the ball $B_\sigma(0)$, we see

\[
\int_{B_\sigma}\abs{\nabla u}^2d\mu \le \int_{B_\sigma}\Delta d^2(u,Q) d\mu = \int_{\partial B_\sigma}\frac{\partial}{\partial r}d^2(u,Q)d\Sigma.
\]

We would like now to derive a stronger result if the target has a stronger curvature bound.

\begin{proposition}\label{negative cat target variation}
For a harmonic map $u:M\to X$ into a CAT(-1) space $X$ and for any $Q\in X$ we have

\[
\frac{1}{2}\Delta d^2(u,Q) \ge \abs{\nabla u}^2 + \left(\frac{d(u,Q)\cosh d(u,Q)}{\sinh d(u,Q)}-1\right)\left(\abs{\nabla u}^2 - \abs{\nabla d(u,Q)}^2\right).
\]
\end{proposition}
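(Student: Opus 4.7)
The plan is to mirror the proof of Proposition~\ref{target variation}, substituting hyperbolic comparison triangles in $\Hyp^2$ for the Euclidean ones in $\R^2$. Fix $x\in M$ and a unit vector $v\in T_xM$. Construct the $\Hyp^2$-comparison triangle for $Q, u(x), u(x+\epsilon v)$ with $\tilde{Q}$ at the origin of $\Hyp^2$, and let $\tu_t(y)$ be the point a fraction $1-t\eta(y)$ along the $\Hyp^2$-geodesic from $\tilde{Q}$ to $\tu(y)$, in direct analogy with the definition of $u_t$ on the target side. By the CAT(-1) comparison, $d(u_t(x), u_t(x+\epsilon v)) \le d_{\Hyp^2}(\tu_t(x), \tu_t(x+\epsilon v))$, which in the limit $\epsilon\to 0$ gives an upper bound on $\abs{v\cdot\nabla u_t}^2$ by the corresponding hyperbolic quantity.

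Next I would expand the hyperbolic side in polar coordinates $(\rho, \theta)$ on $\Hyp^2$ centered at $\tilde{Q}$, where the metric is $d\rho^2 + \sinh^2\rho\, d\theta^2$. Writing $r(y) = d(u(y), Q)$, the point $\tu_t(y)$ has radial coordinate $(1-t\eta(y))\,r(y)$ with unchanged angular coordinate, and
\[
\lim_{\epsilon\to 0}\frac{d_{\Hyp^2}^2(\tu_t(x), \tu_t(x+\epsilon v))}{\epsilon^2} = \bigl((1-t\eta)(v\cdot\nabla r) - t(v\cdot\nabla\eta)\,r\bigr)^2 + \sinh^2\bigl((1-t\eta)r\bigr)\,\omega^2,
\]
where the angular derivative $\omega=v\cdot\nabla\theta$ is pinned down at $t=0$ by the $t=0$ matching identity $(v\cdot\nabla r)^2 + \sinh^2 r\cdot\omega^2 = \abs{v\cdot\nabla u}^2$. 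Expanding to first order in $t$, using $\sinh^2((1-t\eta)r) = \sinh^2 r - 2t\eta\,r\sinh r\cosh r + O(t^2)$ and $2r(v\cdot\nabla r) = v\cdot\nabla d^2(u,Q)$, the terms rearrange to
\begin{align*}
\abs{v\cdot\nabla u_t}^2 \le{} & (1-2t\eta)\abs{v\cdot\nabla u}^2 - t(v\cdot\nabla\eta)\bigl(v\cdot\nabla d^2(u,Q)\bigr) \\
& - 2t\eta\Bigl(\tfrac{r\cosh r}{\sinh r}-1\Bigr)\bigl(\abs{v\cdot\nabla u}^2 - (v\cdot\nabla r)^2\bigr) + O(t^2).
\end{align*}

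The remainder of the argument proceeds as in Proposition~\ref{target variation}: average over unit $v$ at $x$ to replace $\abs{v\cdot\nabla u}^2$ by $\abs{\nabla u}^2$ and $(v\cdot\nabla r)^2$ by $\abs{\nabla d(u,Q)}^2$, multiply by the nonnegative test function $\eta$ and integrate over $M$, invoke $E(u)\le E(u_t)$, divide by $t>0$ and send $t\to 0^+$, then integrate by parts the term containing $\nabla\eta$. This yields the claimed weak inequality. The main technical obstacle is the hyperbolic expansion: identifying $\omega$ intrinsically from the comparison match and tracking $\sinh^2((1-t\eta)r)$ to first order in $t$. The factor $\tfrac{d\cosh d}{\sinh d}-1$ arises precisely from this step, is nonnegative for $d>0$, and vanishes as $d\to 0$, so in the Euclidean limit one recovers the weaker inequality of Proposition~\ref{target variation}.
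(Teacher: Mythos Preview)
Your proposal is correct and follows essentially the same route as the paper: both set up the comparison triangle in $\Hyp^2$ with polar metric $d\rho^2+\sinh^2\rho\,d\theta^2$, define $\tu_t$ by scaling the radial coordinate by $1-t\eta$, decompose $v\cdot\nabla\tu_t$ into radial and angular parts, expand $\sinh^2((1-t\eta)r)$ to first order in $t$ to produce the factor $\tfrac{r\cosh r}{\sinh r}-1$, and then conclude exactly as in Proposition~\ref{target variation}. The only cosmetic difference is that the paper works in the exponential-chart identification of $\Hyp^2$ with $\R^2$ and tracks Euclidean versus hyperbolic lengths of the tangential component explicitly, while you work directly in polar coordinates; the computations are the same.
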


\begin{proof}
The strategy for this proof is identical to that of Proposition~\ref{target variation}. But now for the points $Q$, $u(x)$, and $u(x+\epsilon v)$, we have comparison points in $\Hyp^2$. We will choose normal coordinates in $\Hyp^2$, identifying it with $\R^2$ except with the metric $ds^2 = dr^2 + \sinh^2(r)d\theta^2$.

So take our comparison points to be $0$, $\tilde{u}(x)$, and $\tilde{u}(x+\epsilon v)$. Extend the map $\tilde{u}$ to map the geodesic joining $x$ and $x+\epsilon v$ to the geodesic joining $\tilde{u}(x)$ and $\tilde{u}(x+\epsilon v)$, and define $\tilde{u}_t$ on that geodesic segment by $\tilde{u}_t(y) = (1-t\eta(y))\tilde{u}(y)$ for some smooth function $\eta$. Just as before, the directional derivative of $\tilde{u}_t$ at $x$ is

\[
v\cdot\nabla\tilde{u}_t(x) = (1-t\eta(y))v\cdot\nabla\tilde{u}(x) - t(v\cdot\nabla\eta(x))\tilde{u}(x).
\]

But now this vector is based at $\tilde{u}_t(x)$, so we must use the metric at that point to compute its norm. The vector $\tu(x)$ is already pointing in the radial direction when it is based at $\tu(x)$ or even at $\tu_t(x)$. For $v\cdot\nabla\tu(x)$, the radial component is simply $v\cdot\nabla\abs{\tu}_0(x)$ and so the component orthogonal to the radial has length $\sqrt{\abs{v\cdot\nabla\tu(x)}^2_{\tu(x)} - \abs{v\cdot\nabla\abs{\tu}_0(x)}^2}$ at $\tu(x)$. Now in these coordinates the Euclidean length of the tangential component of $v\cdot\nabla\tu$ is $\frac{\abs{\tu(x)}_0}{\sinh\abs{\tu(x)}_0}$ times its length at $\tu(x)$, and so its length when placed at $\tu_t(x)$ is $\frac{\sinh\abs{\tu_t(x)}_0}{(1-t\eta(x))\sinh\abs{\tu(x)}_0}\sqrt{\abs{v\cdot\nabla\tu(x)}^2_{\tu(x)} - \abs{v\cdot\nabla\abs{\tu}_0(x)}^2}$.

Here I am using subscripts on norms of quantities on the target to indicate where those norms are being taken. In particular, $\abs{\tu(x)}_0$ is the distance from $\tu(x)$ to $0$. Now we can compute

\begin{eqnarray*}
\abs{v\cdot\nabla\tu_t(x)}^2_{\tu_t(x)} & = & \Big[ (1-t\eta(x))(v\cdot\nabla\abs{\tu}_0(x)) - t(v\cdot\nabla\eta(x))\abs{\tu(x)}_0\Big]^2\\
 & & + \frac{\sinh^2\abs{\tu_t(x)}_0}{\sinh^2\abs{\tu(x)}_0}\left(\abs{v\cdot\nabla\tu(x)}^2_{\tu(x)} - \abs{v\cdot\nabla\abs{\tu}_0(x)}^2\right).
\end{eqnarray*}

By Taylor expansion we have

\begin{eqnarray*}
\sinh(z) & = & \sum_{n=0}^\infty \frac{z^{2n+1}}{(2n+1)!}\\
\sinh((1-t\eta)z) & = & \sum_{n=0}^\infty \frac{(1-t\eta)^{2n+1}z^{2n+1}}{(2n+1)!}\\
 & = & \sum_{n=0}^\infty \frac{(1-(2n+1)t\eta)z^{2n+1}}{(2n+1)!} + O(t^2)\\
 & = & \sinh(z) - t\eta z\cosh(z) + O(t^2)\\
\sinh^2((1-t\eta)z) & = & \sinh^2(z) - 2t\eta z\sinh(z)\cosh(z) + O(t^2)\\
\frac{\sinh^2\abs{\tu_t(x)}_0}{\sinh^2\abs{\tu(x)}_0} & = & 1 - 2t\eta\frac{\abs{\tu(x)}_0\cosh\abs{\tu(x)}_0}{\sinh\abs{\tu(x)}_0} + O(t^2).
\end{eqnarray*}
So now we have
\begin{eqnarray*}
\abs{v\cdot\nabla\tu_t(x)}^2_{\tu_t(x)} & = & (1-2t\eta(x))\abs{v\cdot\nabla\abs{\tu}_0(x)}^2 - 2t\abs{\tu(x)}_0(v\cdot\nabla\abs{\tu}_0(x))(v\cdot\nabla\eta(x))\\
 & & + \left(1 - 2t\eta(x)\frac{\abs{\tu(x)}_0\cosh\abs{\tu(x)}_0}{\sinh\abs{\tu(x)}_0}\right)\left(\abs{v\cdot\nabla\tu(x)}^2_{\tu(x)} - \abs{v\cdot\nabla\abs{\tu}_0(x)}^2\right) + O(t^2)\\
 & = & (1-2t\eta(x))\abs{v\cdot\nabla\tu(x)}^2_{\tu(x)} - t(v\cdot\nabla\abs{\tu}^2_0(x))(v\cdot\nabla\eta(x)) + O(t^2)\\
 & & - 2t\eta(x)\left(\frac{\abs{\tu(x)}_0\cosh\abs{\tu(x)}_0}{\sinh\abs{\tu(x)}_0}-1\right)\left(\abs{v\cdot\nabla\tu(x)}^2_{\tu(x)} - \abs{v\cdot\nabla\abs{\tu}_0(x)}^2\right).
\end{eqnarray*}
The remainder of the argument follows exactly as in Proposition~\ref{target variation}.
\end{proof}

We have already used (in Lemma~\ref{monotonicity1}) and will continue to use extensively the fact that a harmonic map is locally Lipschitz continuous. This was shown in \cite{gromov-schoen} for maps into simplicial complexes and in \cite{korevaar-schoen1} for maps into NPC spaces. It also follows from our work if one takes only the weaker statements without assuming continuity, so we include the proof here for completeness.

\begin{proposition}
Let $u:M\to X$ be a harmonic map from a Riemannian manifold $M$ into and NPC metric space $X$. Then $u$ is locally Lipschitz continuous, with Lipschitz constant depending on the energy of $u$ and the injectivity radius of $M$.
\end{proposition}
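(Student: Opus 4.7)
The plan is to derive a pointwise bound on $|\nabla u|^2$ by combining energy monotonicity (from Lemma~\ref{monotonicity1}) with Moser iteration applied to the subharmonic squared-distance function (from Proposition~\ref{target variation}), following the strategy of \cite{korevaar-schoen1} and \cite{gromov-schoen}. Setting $e(\sigma) := \int_{B_\sigma(x_0)} \abs{\nabla u}^2\, d\mu$ and using the coarea formula together with the curvature expansion of Proposition~\ref{metric expansion} to identify $e'(\sigma)$ with $\int_{\partial B_\sigma} \abs{\nabla u}^2\, d\Sigma$ up to an $O(\sigma^2)\,e(\sigma)$ error, Lemma~\ref{monotonicity1} (after discarding the nonnegative radial energy term) becomes the differential inequality
\[
\sigma\, e'(\sigma) - (n-2)\, e(\sigma) \ge -C\sigma^2 e(\sigma) - o(\sigma^{n+2}),
\]
which integrates to the almost-monotonicity $\sigma^{2-n} e(\sigma) \le (1+C\sigma^2)\, R^{2-n} e(R)$ for all $\sigma < R$ less than the injectivity radius at $x_0$. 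In particular $\sigma^{2-n} e(\sigma) \le C\, E(u)\, R^{2-n}$ uniformly on compact interior sets.

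Next, by Proposition~\ref{target variation}, for any $Q \in X$ the function $f_Q(x) := d^2(u(x), Q)$ is weakly subharmonic, and the standard De~Giorgi / Moser mean-value inequality for nonnegative subharmonic $L^1$ functions yields
\[
\sup_{B_{\sigma/2}(x_0)} f_Q \;\le\; \frac{C}{\abs{B_\sigma(x_0)}} \int_{B_\sigma(x_0)} f_Q\, d\mu.
\]
To convert this into a pointwise bound on $\abs{\nabla u}^2$, I would pass to the approximate energy density $e_\epsilon(x) = c_n \epsilon^{-2} \int_{\abs{v}=1} d^2(u(x), u(x+\epsilon v))\, dv$ of \cite{korevaar-schoen1}, which converges weakly to $\abs{\nabla u}^2$. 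Applying the subharmonic mean value to each $d^2(u(\cdot), u(x+\epsilon v))$ and averaging over $v$ shows that $e_\epsilon$ satisfies a weak subharmonic-type inequality, with error terms coming from the curvature of $M$ that are controlled by the monotonicity of the first step. A Moser iteration then yields $\sup_K e_\epsilon \le C\, E(u)\, R^{-n}$ on compact interior sets $K$, uniformly in $\epsilon$. Since $\abs{\nabla u}^2$ is the weak $L^1$ limit of $c_n\, e_\epsilon$, this gives $\abs{\nabla u}^2 \in L^\infty_{\mathrm{loc}}$ with bound depending only on $E(u)$ and the injectivity radius, hence the Lipschitz estimate.

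The main obstacle is making the Moser iteration go through \emph{uniformly} in $\epsilon$: one must derive, from the target variation of Proposition~\ref{target variation} applied pointwise at varying base points, a weak subharmonic inequality for $e_\epsilon$ whose error constants do not deteriorate as $\epsilon \to 0$. This is the technical heart of the Korevaar--Schoen Lipschitz theorem, and it is where the interplay between the domain and target variation formulas is most delicate. The additional curvature corrections appearing in Lemma~\ref{monotonicity1} contribute only lower-order terms that are easily absorbed by working at scales below the injectivity radius, so the Korevaar--Schoen argument transfers with only cosmetic modifications once the almost-monotonicity of Step~1 is in hand.
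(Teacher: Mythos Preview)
Your outline follows the Korevaar--Schoen strategy of showing $e_\epsilon$ is (approximately) subharmonic uniformly in $\epsilon$ and then Moser-iterating. That is a valid route, but you have essentially deferred the hard step to \cite{korevaar-schoen1}: you identify the ``main obstacle'' correctly and then do not resolve it. As written, this is a citation rather than a proof.

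The paper's argument is genuinely different and more self-contained. It never touches the approximate energy density or a uniform-in-$\epsilon$ iteration. Instead it uses three coarse ingredients:
\begin{itemize}
\item the order bound $I(\sigma)\le (1+O(\sigma^2))\,\sigma E(\sigma)$ (the weak form of Proposition~\ref{order}, which does \emph{not} require Lipschitz continuity),
\item the near-monotonicity $\frac{d}{d\sigma}\log\bigl(\fint_{B_\sigma}|\nabla u|^2\bigr)\ge O(\sigma)$ (the weak form of Proposition~\ref{mean value inequality}),
\item the Li--Schoen mean-value inequality for the subharmonic function $d^2(u,Q)$.
\end{itemize}
Chaining these gives, for any $Q$,
\[
\sup_{B_{r/2}(x)} d^2(u,Q)\ \le\ C\fint_{B_r(x)} d^2(u,Q)\ =\ \frac{C}{|B_r|}\int_0^r I(s)\,ds\ \le\ \frac{C}{|B_r|}\int_0^r s\,E(s)\,ds\ \le\ O(r^2)\fint_{B_r(x)}|\nabla u|^2.
\]
One then takes $Q$ to be the midpoint of $u(x_1)$ and $u(x_2)$, applies this at radius $r=2d(x_1,x_2)$ around each $x_i$, and uses the second bullet to replace $\fint_{B_r}$ by $\fint_{B_{\sigma/2}}$ at a fixed scale. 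The Lipschitz bound drops out directly.

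Two further remarks. First, you invoke Lemma~\ref{monotonicity1}, but that lemma already uses $|\nabla u|^2\in L^\infty_{\mathrm{loc}}$ (via Proposition~\ref{lebesgue-little o}); the paper is careful to use only the raw formula~\eqref{monotonicity-gs}, whose right-hand side is visibly $O(\sigma^2)E(\sigma)$ without any continuity assumption. Second, your Step~1 yields only monotonicity of $\sigma^{2-n}E(\sigma)$, whereas the paper needs the stronger near-monotonicity of $\fint_{B_\sigma}|\nabla u|^2\sim\sigma^{-n}E(\sigma)$; this requires the target variation (through the order bound and Lemma~\ref{A vs I} in weak form), not just the domain variation you use.
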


\begin{proof}
First, we have equation~\eqref{monotonicity-gs} but not the result of Lemma~\ref{monotonicity1}. Following that, we will have only the following statement out of \cite{gromov-schoen}, which we expand near $0$, rather than the full force of Proposition~\ref{order}:

\[
\frac{\sigma \int_{B_\sigma}\abs{\nabla u}^2d\mu}{\int_{\partial B_\sigma}d^2(u,Q)d\Sigma} \ge e^{-c_1\sigma^2} = 1 + O(\sigma^2).
\]
And finally, in the proof of Proposition~\ref{mean value inequality} we would see
\[
\frac{d}{d\sigma} \log\left(\fint_{B_\sigma}\abs{\nabla u}^2 d\mu\right) \ge O(\sigma).
\]

Since the function $d^2(u,Q)$ is subharmonic for any $Q\in X$, the mean value inequality from \cite{li-schoen} says

\[
\sup_{B_{r/2}(x_0)}d^2(u,Q) \le C(1+O(r))\fint_{B_r(x_0)}d^2(u,Q)d\mu.
\]
Here the constant $C$ depends on the dimension of $M$, while the $O(r)$ term depends on the Ricci curvatures. If $M$ has non-negative Ricci curvatures, we may take $O(r) = 0$.

Now for $x\in M$, $r$ smaller than the injectivity radius at $x$, and any $Q\in X$ we have

\begin{eqnarray*}
\sup_{B_{r/2}(x)}d^2(u,Q) & \le & C(1+O(r))\fint_{B_r(x)}d^2(u,Q)d\mu\\
 & = & \frac{C + O(r)}{\abs{B_r(x)}}\int_0^r \int_{\partial B_s(x)}d^2(u,Q)d\Sigma ds\\
 & \le & \frac{C+O(r)}{\abs{B_r(x)}}\int_0^r (1+O(s))s\int_{B_s(x)}\abs{\nabla u}^2d\mu ds\\
 & \le & O(r^2)\fint_{B_r(x)}\abs{\nabla u}^2d\mu.
\end{eqnarray*} 

Fix $x_0\in M$ and $\sigma$ smaller than its injectivity radius. For $x_1,x_2\in B_{\sigma/8}(x_0)$ we have $d(x_1,x_2)<\sigma/4$. Take $Q$ to be the midpoint of $u(x_1)$ and $u(x_2)$. Note that

\[
d^2(u(x_1),u(x_2)) \le 2d^2(u(x_1),Q)+2d^2(u(x_2),Q).
\]

Applying the above inequality around $x_i$ for $i=1,2$ with radius $r = 2d(x_1,x_2)$, we have

\[
d^2(u(x_i),Q) \le O(r^2)\fint_{B_r(x_i)}\abs{\nabla u}^2 d\mu.
\]
Integrating the inequality from Proposition~\ref{mean value inequality} gives
\[
\fint_{B_{\sigma/2}(x_i)}\abs{\nabla u}^2d\mu \ge (1+O(\sigma^2))\fint_{B_r(x_i)}\abs{\nabla u}^2 d\mu.
\]
Hence
\begin{eqnarray*}
d^2(u(x_i),Q) & \le & O(r^2)(1+O(\sigma^2))\fint_{B_{\sigma/2}(x_i)}\abs{\nabla u}^2 d\mu\\
 & \le & \frac{O(r^2)(1+O(\sigma^2))}{\abs{B_{\sigma/2}(x_i)}}\int_{B_{\sigma/2}(x_0)}\abs{\nabla u}^2 d\mu.
\end{eqnarray*}

This $O(r^2)$ term we can say is less than some $Cr^2 = \frac{C}{4}d^2(x_1,x_2)$. Putting everything together yields, for some constant $C(\sigma)$ depending on $\sigma$,
\[
\frac{d^2(u(x_1),u(x_2))}{d^2(x_1,x_2)} \le C(\sigma)\int_{B_\sigma(x_0)}\abs{\nabla u}^2 d\mu.
\]
As this holds for any $x_1,x_2\in B_{\sigma/8}(x_0)$, this gives a local Lipschitz bound, and it can be seen how the bound depends on the energy of $u$ as well as the injectivity radius (i.e. distance to the boundary).
\end{proof}

\section{A Bochner Formula}\label{Sbochner}

This section of the paper will first give an asymptotic expansion of the usual monotonicity formula for harmonic maps before deriving the mean value property that will lead to our main theorem.

For most of this section, we will be working around a single fixed point $x_0\in M$, and computing in normal coordinates around $x_0$. So until we reach Proposition~\ref{mean value inequality}, fix $x_0$ to be a Lebesgue point for $pi$ and such that $\abs{\nabla u}^2(x_0)\neq0$. In a neighborhood of $x_0$, identify points $x\in M$ with vectors in $T_{x_0}M$ via the exponential map, in particular identifying $x_0$ with $0$. Also for simplicity of notation, we will use $B_\sigma$ to denote $B_\sigma(x_0) = B_\sigma(0)$.

Now define
\[
E(\sigma) = \int_{B_\sigma}\abs{\nabla u}^2d\mu
\]
And for a point $Q\in X$ define
\[
I(\sigma) = \int_{\partial B_\sigma}d^2(u,Q)d\mu
\]

We would like to choose the point $Q\in X$ that minimizes the integral $I(\sigma)$, at least asymptotically as $\sigma\to 0$. Since harmonic maps are locally Lipschitz continuous (c.f. \cite{gromov-schoen}, \cite{korevaar-schoen1}), we will always take $Q = u(x_0)$.

We will use frequently expansions of $E(\sigma)$ and $I(\sigma)$. For $E(\sigma)$, we will take advantage of $x_0$ being a Lebesgue point for $\abs{\nabla u}^2$ to say

\[
E(\sigma) = \abs{\nabla u}^2(0)Vol(B_\sigma) + o(\sigma^n) = \omega_n\abs{\nabla u}^2(0)\sigma^n + o(\sigma^n).
\]
For $I(\sigma)$, we first work only at a single point $x$ to derive a pointwise expression for $d^2(u,Q)$ before arriving at an expansion of $I(\sigma)$.

\begin{lemma}\label{d squared}
\[
d^2(u(x),u(0)) = \pi_{ij}(0)x^ix^j + e(x).
\]
Here $e(x)$ depends on the basepoint $0$ as well as $x$ near $0$, but $\frac{e(x)}{\abs{x}^2}$ can be bounded independent of both $x$ and $0$. Integrating $\frac{e(x)}{\abs{x}^2}$ along concentric spheres the quantity tends to $0$ at almost every basepoint:
\[
I(\sigma) = \omega_n\abs{\nabla u}^2(0)\sigma^{n+1} + o(\sigma^{n+1}).
\]
\end{lemma}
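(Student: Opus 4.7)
Define $e(x) := d^2(u(x), u(0)) - \pi_{ij}(0) x^i x^j$ and verify its two properties in turn. For the uniform bound, the local Lipschitz continuity established in the previous proposition gives $d^2(u(x),u(0)) \le L^2 \abs{x}^2$ on a neighborhood of $0$, with $L$ depending only on the local energy of $u$ and the injectivity radius of $M$. Since the symmetric matrix $\pi(0)$ is positive semi-definite with trace $\abs{\nabla u}^2(0) \le L^2$, we also have $\abs{\pi_{ij}(0) x^i x^j} \le L^2 \abs{x}^2$. Thus $\abs{e(x)}/\abs{x}^2 \le 2L^2$, with $L$ uniform as the basepoint $0$ varies over any fixed compact set.

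For the vanishing of the sphere average, I would invoke the Korevaar--Schoen construction of the pullback tensor: at any Lebesgue point $x_0$ of $\pi$, for almost every direction $\nu \in S^{n-1} \subset T_{x_0}M$ the directional derivative exists and satisfies
\[
\lim_{\epsilon \to 0} \frac{d^2(u(\exp_{x_0}(\epsilon\nu)), u(x_0))}{\epsilon^2} = \pi_{ij}(x_0)\nu^i\nu^j,
\]
so $e(\epsilon\nu)/\epsilon^2 \to 0$ for a.e. $\nu$. Combined with the uniform bound from the previous paragraph, the dominated convergence theorem on $S^{n-1}$ yields $\int_{S^{n-1}} e(\sigma\nu)\, dS(\nu) = o(\sigma^2)$, which after the polar decomposition $dS = \sigma^{n-1}dS(\nu)$ on $\partial B_\sigma$ gives
\[
\int_{\partial B_\sigma} e(x)\, dS = o(\sigma^{n+1}).
\]

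The main term is handled by Proposition~\ref{quadratic form}\ref{quadratic on sphere}, giving
\[
\int_{\partial B_\sigma} \pi_{ij}(0) x^i x^j \, dS = \omega_n \, tr(\pi)(0)\, \sigma^{n+1} = \omega_n \abs{\nabla u}^2(0)\,\sigma^{n+1}.
\]
The Riemannian surface measure satisfies $d\Sigma = (1 + O(\sigma^2))\,dS$ on $\partial B_\sigma$ by Proposition~\ref{metric expansion}, so swapping measures contributes at most $O(\sigma^{n+3}) = o(\sigma^{n+1})$ of additional error on each term. Adding the main term and the error term yields the stated expansion of $I(\sigma)$.

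The main obstacle is the pointwise radial-limit statement used in the second step. Unlike an ordinary Lebesgue-differentiation result, the convergence $d^2(u(\epsilon\nu),u(0))/\epsilon^2 \to \pi_{ij}(0)\nu^i\nu^j$ along rays at a.e. basepoint is a structural feature of the Korevaar--Schoen directional energy formalism; passing from "for each smooth vector field $Z$, $\pi(Z,Z) = \abs{u_*(Z)}^2$ almost everywhere" to "at a.e. point $x_0$, for a.e. direction $\nu$ the ray-limit exists and equals $\pi_{ij}(x_0)\nu^i\nu^j$" is the non-trivial input, obtained by a Fubini-type argument on the underlying $L^1$ representatives.
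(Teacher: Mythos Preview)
Your proof is correct and follows the same outline as the paper: define $e(x)$ as the difference, bound $e(x)/\abs{x}^2$ via the local Lipschitz constant, and use Korevaar--Schoen theory for the vanishing of the sphere integral. The one substantive difference is in how the Korevaar--Schoen input is invoked. You argue ray-wise---for a.e.\ direction $\nu$, $e(\sigma\nu)/\sigma^2\to 0$---and then apply dominated convergence on $S^{n-1}$; this is why you flag the Fubini-type passage as the main obstacle. The paper instead observes that $\frac{1}{\sigma^{n+1}}\int_{\partial B_\sigma}e(x)\,d\Sigma$ is precisely the difference between the $\sigma$-approximate energy density at $0$ and the true energy density $\abs{\nabla u}^2(0)$, and the convergence of approximate energy densities to $\abs{\nabla u}^2$ at a.e.\ point is already the content of the Korevaar--Schoen construction. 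This sidesteps the ray-wise step entirely and gives the $o(\sigma^{n+1})$ conclusion directly, so the obstacle you identify does not actually need to be confronted. Your explicit handling of $d\Sigma$ versus $dS$ and the use of Proposition~\ref{quadratic form} for the main term are more careful than the paper's somewhat terse treatment.
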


\begin{proof}
We recall the definition of the directional derivatives from \cite{korevaar-schoen1}, for almost every direction $Z$:

\[
\abs{u_*Z}^2(0) = \lim_{\epsilon\to 0} \frac{d^2(u(\epsilon Z),u(0))}{\epsilon^2}.
\]
We also recall that the pull back tensor $\pi$ satisfies
\[
\abs{u_*Z}^2(0) = \pi_{ij}(0)Z^iZ^j.
\]



Hence the quantity
\[
\frac{d^2(u(\epsilon Z),u(0))}{\epsilon^2} - \pi_{ij}(0)Z^iZ^j
\]
tends to $0$ as $\epsilon\to 0$ for almost every direction $Z$. Say $\abs{Z}=1$, and letting $x = \epsilon Z$ we define

\[
e(x) = d^2(u(x),u(0)) - \pi_{ij}(0)x^ix^j.
\]


Since $u$ is locally Lipschitz, we can say $\frac{e(x)}{\abs{x}^2}$ is bounded independent of $\abs{x}$ and the basepoint $0$. Integrating over a sphere of radius $\sigma$ we see

\[
I(\sigma) = \omega_n\abs{\nabla u}^2(0)\sigma^{n+1} + \int_{\partial B_\sigma} e(x)d\Sigma.
\]

The quantity $\frac{1}{\sigma^{n+1}}\int_{\partial B_\sigma}e(x)d\Sigma$ measures the difference between the $\sigma$-approximate energy density at $0$ and the actual energy density. This difference tends towards $0$ at almost every point $0\in M$. Together with the boundedness property, this shows

\[
I(\sigma) = \omega_n\abs{\nabla u}^2(0)\sigma^{n+1} + o(\sigma^{n+1}).
\]
\end{proof}

For CAT(-1) targets we will also need to understand $\nabla d(u,Q)$. Since we expect the same result whether $X$ is CAT(-1) or just NPC, and the computations are easier if we compare to Euclidean space, we will only really use the weaker assumption that $X$ is NPC.

\begin{lemma}\label{grad d squared}
\[
\int_{B_\sigma}d^2(u,Q)\abs{\nabla d(u,Q)}^2d\mu = \frac{\omega_n}{n+2}\pi:\pi(0)\sigma^{n+2} + o(\sigma^{n+2}).
\]
Here $\pi:\pi$ means $g^{ik}g^{j\ell}\pi_{ij}\pi_{k\ell}$, which at the center of normal coordinates becomes $\pi_{ij}\pi_{ij}$.
\end{lemma}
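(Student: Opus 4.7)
The plan is to exploit the chain-rule identity $d^2(u,Q)|\nabla d(u,Q)|^2 = |\nabla \tfrac{1}{2}d^2(u,Q)|^2$, which holds almost everywhere because $d(u,Q)$ is Lipschitz. Writing $w(x) = d^2(u(x),Q)$ and using Lemma~\ref{d squared} to expand $w = f + e$ with $f(x) = \pi_{ij}(0)x^ix^j$, I would establish the asymptotic by matching lower and upper bounds on $\tfrac{1}{4}\int_{B_\sigma}|\nabla w|^2\,dx$.

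For the lower bound, expand $|\nabla w|^2 = |\nabla f|^2 + 2\nabla f \cdot \nabla e + |\nabla e|^2$ and drop the non-negative term $|\nabla e|^2$. The leading piece $\tfrac{1}{4}|\nabla f|^2 = (\pi\pi)_{ij}(0)x^ix^j$ is a quadratic form with trace $\pi:\pi(0)$, so Proposition~\ref{quadratic form}(2) gives $\int_{B_\sigma}\tfrac{1}{4}|\nabla f|^2\,dx = \frac{\omega_n}{n+2}\pi:\pi(0)\sigma^{n+2}$. The cross term, after integration by parts shifting $\nabla$ onto $f$ (whose Laplacian $\Delta f = 2|\nabla u|^2(0)$ is constant), reduces to a multiple of $\int_{B_\sigma}e\,dx$. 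Strengthening Lemma~\ref{d squared} to $\int_{\partial B_\rho}|e|\,dS = o(\rho^{n+1})$ via dominated convergence on the sphere (using $|e(\epsilon Z)|/\epsilon^2 \to 0$ a.e.\ and the uniform bound $|e|\leq C|x|^2$), and integrating radially in the spirit of Lemma~\ref{integrate-little o}, yields $\int_{B_\sigma}|e|\,dx = o(\sigma^{n+2})$; the boundary contribution is likewise $o(\sigma^{n+2})$ since $|\partial_r f| = O(\sigma)$.

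For the matching upper bound, integrate by parts:
\[
\tfrac{1}{4}\int_{B_\sigma}|\nabla w|^2\,dx = -\tfrac{1}{4}\int_{B_\sigma}w\,\Delta w\,dx + \tfrac{1}{4}\int_{\partial B_\sigma}w\,\partial_r w\,dS.
\]
Proposition~\ref{target variation} provides $\tfrac{1}{2}\Delta w \geq |\nabla u|^2$, and since $w \geq 0$, Proposition~\ref{lebesgue-little o} evaluates the bulk contribution to $-\frac{\omega_n}{2(n+2)}|\nabla u|^4(0)\sigma^{n+2} + o(\sigma^{n+2})$. The boundary contribution, expanded via $w = f+e$ and $\partial_r w = \partial_r f + \partial_r e$ with $\partial_r f = 2f/\sigma$ on $\partial B_\sigma$, has main piece $\tfrac{1}{4}\int_{\partial B_\sigma}f\partial_r f\,dS = \frac{\omega_n}{2(n+2)}(2\pi:\pi(0)+|\nabla u|^4(0))\sigma^{n+2}$ by Proposition~\ref{quadratic form}(3). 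Combining this with the bulk contribution produces exactly $\frac{\omega_n}{n+2}\pi:\pi(0)\sigma^{n+2}$ up to lower-order errors. The main obstacle is handling the mixed boundary pieces $e\partial_r f$, $f\partial_r e$, and $e\partial_r e$: the first follows from the $L^1$ estimate on $|e|$, while the others require the stronger averaged bound $\int_{\partial B_\sigma}e^2\,dS = o(\sigma^{n+3})$ (obtained by dominated convergence on $e^2/|x|^4$) together with an integration-by-parts maneuver using $f\partial_r e = \partial_r(fe) - e\partial_r f$ to convert normal derivatives of $e$ into integrals already controlled.
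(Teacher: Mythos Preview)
Your lower bound is fine: dropping $|\nabla e|^2$, integrating the quadratic $|\nabla f|^2$ via Proposition~\ref{quadratic form}, and handling the cross term by putting both derivatives on the smooth $f$ all work as you describe. The gap is in the upper bound, specifically in the boundary pieces $\int_{\partial B_\sigma}f\,\partial_r e$ and $\int_{\partial B_\sigma}e\,\partial_r e$. Your maneuver $f\partial_r e = \partial_r(fe) - e\partial_r f$ does not help: if you interpret $\int_{\partial B_\sigma}\partial_r(fe)\,dS$ as $\tfrac{d}{d\sigma}\int_{\partial B_\sigma}fe\,dS - \tfrac{n-1}{\sigma}\int_{\partial B_\sigma}fe\,dS$, you are differentiating an $o(\sigma^{n+3})$ quantity, and the derivative of such a function is \emph{not} $o(\sigma^{n+2})$ in general. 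If instead you push it back into the ball via the divergence theorem, you pick up $\int_{B_\sigma}f\,\Delta e$, and $\Delta e = \Delta w - \Delta f$ is only bounded from \emph{below} by Proposition~\ref{target variation}; that yields a lower bound on $\int_{\partial B_\sigma}f\partial_r e$, which is the wrong direction for an upper bound on $\int_{\partial B_\sigma}w\partial_r w$. The same obstruction blocks $\int_{\partial B_\sigma}e\partial_r e = \int_{B_\sigma}(e\Delta e + |\nabla e|^2)$, which reintroduces the very quantity you are trying to bound. In short, knowing only $\int_{\partial B_\sigma}|e|=o(\sigma^{n+1})$, $\int_{\partial B_\sigma}e^2=o(\sigma^{n+3})$, and the one-sided $\Delta w \ge 2|\nabla u|^2$ is not enough to control $\partial_r e$ on spheres to order $o(\sigma^{n+2})$.

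The paper closes this gap by a different route: rather than treating $\nabla w$ as a black box, it uses the NPC hypothesis directly to obtain a \emph{pointwise} formula for the directional derivative. One builds a four-point subembedding of $Q,\,u(x),\,u(x+\epsilon v),\,u((1-\epsilon)(x+\epsilon v))$ into $\R^2$, extends to an affine map, and computes $v\cdot\nabla d^2(u,Q)(x) \le 2\pi_{ij}(x)x^iv^j + e(x)$; replacing $v$ by $-v$ upgrades this to an equality up to an error of the same type. Squaring and averaging over $v\in S^{n-1}$ then gives $|\nabla d^2(u,Q)|^2(x) = 4\pi_{ik}(x)\pi_{jk}(x)x^ix^j + e(x)$ pointwise, after which the integration is routine. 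The missing idea in your argument is precisely this comparison-geometry computation of $\nabla w$; without it the radial derivative of the error is uncontrolled.
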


\begin{proof}
For a point $x\in B_\sigma$ and a unit vector $v\in T_{x_0}M$, consider the points $u(0)=Q$, $u(x)$, $u(x+\epsilon v)$, and $u((1-\epsilon)(x+\epsilon v))$. Construct a subembedding in $\R^2$, i.e. four points $0$, $\tu(x)$, $\tu(x+\epsilon v)$, and $\tu((1-\epsilon)(x+\epsilon v))$ such that $d(u(x+\epsilon v),Q)\le\abs{\tu(x+\epsilon v)}$, $d(u(1-\epsilon)(x+\epsilon v),u(x))\le\abs{\tu((1-\epsilon)(x+\epsilon v))-\tu(x)}$ and all other corresponding distances are equal.

Now extend $\tu$ to be an affine map that interpolates the points $\tu(x)$, $\tu(x+\epsilon v)$ and $\tu((1-\epsilon)(x+\epsilon v))$. It will be given by

\[
\tu(tx+s\epsilon v) = t\tu(x)+s(\tu(x+\epsilon v)-\tu(x)) + \frac{t-1}{\epsilon}((1-\epsilon)\tu(x+\epsilon v)-\tu((1-\epsilon)(x+\epsilon v)).
\]

Now we compute

\begin{eqnarray*}
v\cdot\nabla d^2(u,Q)(x) & = & \lim_{s\to0}\frac{d^2(u(x+s\epsilon v),Q)-d^2(u(x),Q)}{s\epsilon}\\
 & \le & \lim_{s\to0}\frac{\abs{\tu(x+s\epsilon v)}^2-\abs{\tu(x)}^2}{s\epsilon}\\
 & = & 2\tu(x)\cdot\frac{\tu(x+\epsilon v)-\tu(x)}{\epsilon}\\
 & = & 2(x\cdot\nabla\tu(x))\cdot(v\cdot\nabla\tu(x))\\
 & = & \frac{1}{1-\epsilon}\bigg(\abs{x\cdot\nabla\tu(x)}^2 + (1-\epsilon)^2\abs{v\cdot\nabla\tu(x)}^2\\
 & & - \abs{((1-\epsilon)v-x)\cdot\nabla\tu(x)}^2\bigg).
\end{eqnarray*}

The first term is simply $\abs{x\cdot\nabla\tu(x)}^2=\abs{\tu(x)}^2=d^2(u(x),Q) = \pi_{ij}(x)x^ix^j + e(x)$. The second term is $(1-\epsilon)^2\abs{v\cdot\nabla\tu(x)}^2 = (1-\epsilon)^2\left(\frac{\abs{\tu(x+\epsilon v)-\tu(x)}^2}{\epsilon^2}+o(1)\right) = \abs{u_*v}^2(x)+o(1)$. The last term is $\frac{\abs{\tu((1-\epsilon)(x+\epsilon v))-\tu(x)}^2}{\epsilon^2}+o(1) \ge \abs{u_*((1-\epsilon)v-x)}^2(x)+o(1) = \abs{u_*(v-x)}^2(x)+o(1)$. Putting this all together and sending $\epsilon\to0$, recall Lemma~\ref{d squared} and see

\begin{eqnarray*}
v\cdot\nabla d^2(u,Q)(x) & \le & d^2(u(x),Q) + \abs{u_*v}^2(x) - \abs{u_*(v-x)}^2(x)\\
 & = & 2\pi_{ij}(x)x^iv^j + e(x).
\end{eqnarray*}

Considering taking a partial derivative in the opposite direction, $-v$, we would have
\[
(-v)\cdot\nabla d^2(u,Q)(x) \le -2\pi_{ij}(x)x^iv^j + e(x).
\]
Hence we have an equality! The sign of the $e(x)$ term does not matter, as it is sufficiently small, so it will become a little-oh term upon integration. Now we use Proposition~\ref{quadratic form}

\begin{eqnarray*}
\abs{\nabla d^2(u,Q)}^2(x) & = & \frac{1}{\omega_n}\int_{\partial B_1}(v\cdot\nabla d^2(u,Q))^2(x)dS(v)\\
 & = & \frac{1}{\omega_n}\int_{\partial B_1}4\pi_{ij}(x)\pi_{k\ell}(x)x^iv^jx^kv^\ell dS(v) + e(x)\\
 & = & 4\pi_{ik}(x)\pi_{jk}(x)x^ix^j + e(x)\\
\int_{B_\sigma}\abs{\nabla d^2(u,Q)}^2d\mu & = & 4\int_{B_\sigma}\pi_{ik}(x)\pi_{jk}(x)x^ix^j(1+O(\sigma^2))dx+o(\sigma^{n+2})\\
 & = & 4\int_{B_\sigma}\pi_{ik}(0)\pi_{jk}(0)x^ix^jdx + o(\sigma^{n+2})\\
 & = & 4\frac{\omega_n}{n+2}\pi:\pi(0)\sigma^{n+2}+o(\sigma^{n+2}).
\end{eqnarray*}
\end{proof}

Now we will derive an inequality relating $E$, $I$, and $\abs{\frac{\partial u}{\partial r}}$:

\begin{lemma}\label{energy bound}
Let $u:M\to X$ be a harmonic map into a metric space $X$. If $X$ is NPC, then
\[
E(\sigma) \le \left(I(\sigma)\int_{\partial B_\sigma}\abs{\frac{\partial u}{\partial r}}^2d\Sigma\right)^\frac{1}{2}.
\]
If $X$ is CAT(-1), then
\[
E(\sigma) \le \left[1+\frac{\pi:\pi(0)-\abs{\nabla u}^4(0)}{3(n+2)\abs{\nabla u}^2(0)}\sigma^2+o(\sigma^2)\right]\left(I(\sigma)\int_{\partial B_\sigma}\abs{\frac{\partial u}{\partial r}}^2d\Sigma\right)^\frac{1}{2}.
\]
\end{lemma}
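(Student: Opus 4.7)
The plan is to feed the target-variation inequalities of Propositions~\ref{target variation} and \ref{negative cat target variation} into the divergence theorem on $B_\sigma$, and then apply Cauchy--Schwarz on the boundary sphere to produce a factor of $I(\sigma)^{1/2}$ times $\bigl(\int_{\partial B_\sigma}|\partial u/\partial r|^2 d\Sigma\bigr)^{1/2}$.

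Concretely, for the NPC case I would take $Q=u(x_0)$, let the test function $\eta$ approach the characteristic function of $B_\sigma$ in the weak statement $\tfrac{1}{2}\Delta d^2(u,Q)\ge |\nabla u|^2$, and integrate by parts to obtain
\[
2E(\sigma) \;\le\; \int_{\partial B_\sigma}\frac{\partial}{\partial r}d^2(u,Q)\,d\Sigma.
\]
Pointwise on $\partial B_\sigma$ the chain rule gives $\partial_r d^2(u,Q)=2d(u,Q)\,\partial_r d(u,Q)$, and the triangle inequality applied to the definition of the directional derivative of $d(u,Q)$ yields $|\partial_r d(u,Q)|\le |\partial u/\partial r|$. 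Cauchy--Schwarz on $\partial B_\sigma$ then gives
\[
\int_{\partial B_\sigma}\frac{\partial}{\partial r}d^2(u,Q)\,d\Sigma \le 2\bigl(I(\sigma)\bigr)^{1/2}\Bigl(\int_{\partial B_\sigma}\bigl|\tfrac{\partial u}{\partial r}\bigr|^2 d\Sigma\Bigr)^{1/2},
\]
which is the NPC bound after cancelling the factor of $2$.

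For the CAT$(-1)$ statement I would run the same argument with Proposition~\ref{negative cat target variation} in place of Proposition~\ref{target variation}, which produces the additional bulk term
\[
2\int_{B_\sigma}\Bigl(\tfrac{d(u,Q)\cosh d(u,Q)}{\sinh d(u,Q)}-1\Bigr)\bigl(|\nabla u|^2-|\nabla d(u,Q)|^2\bigr)\,d\mu
\]
on the left of the divergence identity. Taylor expanding $\tfrac{z\cosh z}{\sinh z}-1=\tfrac{z^2}{3}+O(z^4)$ and using that $d(u,Q)=O(\sigma)$ on $B_\sigma$, this term equals $\tfrac{2}{3}\int_{B_\sigma}d^2(u,Q)\bigl(|\nabla u|^2-|\nabla d(u,Q)|^2\bigr)d\mu$ up to $o(\sigma^{n+2})$. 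I would then evaluate the two pieces asymptotically: for $\int d^2|\nabla u|^2$, combine Proposition~\ref{lebesgue-little o} with Lemma~\ref{d squared} to get $\frac{\omega_n}{n+2}|\nabla u|^4(0)\sigma^{n+2}+o(\sigma^{n+2})$, while $\int d^2|\nabla d|^2$ is exactly what Lemma~\ref{grad d squared} computes, giving $\frac{\omega_n}{n+2}\pi\!:\!\pi(0)\sigma^{n+2}+o(\sigma^{n+2})$. Rearranging,
\[
E(\sigma)\le \bigl(I(\sigma)\smallint_{\partial B_\sigma}|\partial_r u|^2 d\Sigma\bigr)^{1/2} + \tfrac{\omega_n\bigl(\pi:\pi(0)-|\nabla u|^4(0)\bigr)}{3(n+2)}\sigma^{n+2}+o(\sigma^{n+2}).
\]

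To convert this additive correction into the multiplicative one in the statement, I need the leading asymptotics of $\bigl(I(\sigma)\int_{\partial B_\sigma}|\partial_r u|^2 d\Sigma\bigr)^{1/2}$. Lemma~\ref{d squared} gives $I(\sigma)=\omega_n|\nabla u|^2(0)\sigma^{n+1}+o(\sigma^{n+1})$, and a parallel expansion (using $|\partial_r u|^2=\pi_{ij}x^ix^j/r^2$ together with Proposition~\ref{quadratic form} and Proposition~\ref{lebesgue-little o}) gives $\int_{\partial B_\sigma}|\partial_r u|^2 d\Sigma=\omega_n|\nabla u|^2(0)\sigma^{n-1}+o(\sigma^{n-1})$, so that the product's square root equals $\omega_n|\nabla u|^2(0)\sigma^n+o(\sigma^n)$. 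Dividing the additive correction by this quantity, using Lemma~\ref{integrate-little o} for the $1/(1+o(1))$ expansion, produces exactly the factor $1+\frac{\pi:\pi(0)-|\nabla u|^4(0)}{3(n+2)|\nabla u|^2(0)}\sigma^2+o(\sigma^2)$.

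I expect the main technical obstacle to be the bookkeeping of the $o(\sigma^k)$ terms: in particular justifying that the pointwise estimate $|\partial_r d(u,Q)|\le |\partial u/\partial r|$ derived from the NPC subembedding really does integrate against the weak Laplacian inequality, and that the Taylor remainder in $\tfrac{z\cosh z}{\sinh z}-1$ contributes only to the $o(\sigma^{n+2})$ terms once one multiplies by $|\nabla u|^2-|\nabla d|^2$ (which is $L^\infty$ but not a priori continuous). Once these error terms are handled, the remaining manipulations are purely algebraic.
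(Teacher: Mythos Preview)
Your NPC argument is identical to the paper's. For the CAT$(-1)$ case your route is genuinely different and in fact cleaner: the paper rewrites $\tfrac12\Delta d^2=d\,\Delta d+|\nabla d|^2$, isolates $|\nabla u|^2\le\tanh d\,\Delta d+|\nabla d|^2$, integrates by parts against $\tanh d$, and then has to expand $\int_{\partial B_\sigma}\tanh^2 d(u,Q)$ relative to $I(\sigma)$ via $\tanh^2 z=z^2-\tfrac{2}{3}z^4+O(z^6)$ together with Proposition~\ref{quadratic form}(\ref{product quadratic on sphere}). You instead integrate Proposition~\ref{negative cat target variation} as it stands, keep the boundary term the same as in the NPC case, and push all the CAT$(-1)$ information into the bulk term $\int_{B_\sigma}\bigl(\tfrac{d\cosh d}{\sinh d}-1\bigr)(|\nabla u|^2-|\nabla d|^2)$, which after the Taylor expansion reduces to $\tfrac13\int_{B_\sigma}d^2|\nabla u|^2$ and $\tfrac13\int_{B_\sigma}d^2|\nabla d|^2$. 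The first is handled by Lemma~\ref{d squared} plus Proposition~\ref{lebesgue-little o}, the second is exactly Lemma~\ref{grad d squared}; this avoids the $\int_{\partial B_\sigma}d^4$ computation entirely.

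There is one slip in your last step. You convert the additive correction into a multiplicative factor by dividing by the leading asymptotic of $\bigl(I(\sigma)\int_{\partial B_\sigma}|\partial_r u|^2\bigr)^{1/2}$, and for that you assert $\int_{\partial B_\sigma}|\partial_r u|^2\,d\Sigma=\omega_n|\nabla u|^2(0)\sigma^{n-1}+o(\sigma^{n-1})$. Proposition~\ref{lebesgue-little o} does not give this: it is a ball statement, and there is no Lebesgue differentiation on spheres for merely $L^\infty$ integrands. Since $\pi:\pi-|\nabla u|^4\le 0$, the additive bound is actually weaker than the multiplicative one whenever $(I\int)^{1/2}$ overshoots $\omega_n|\nabla u|^2(0)\sigma^n$, so you do need control from above here. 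The fix is immediate: instead of dividing by $(I\int)^{1/2}$, use the known expansion $E(\sigma)=\omega_n|\nabla u|^2(0)\sigma^n+o(\sigma^n)$ to write the additive correction as $\bigl[\tfrac{\pi:\pi(0)-|\nabla u|^4(0)}{3(n+2)|\nabla u|^2(0)}\sigma^2+o(\sigma^2)\bigr]E(\sigma)$, move it to the left, and divide through by $1-B\sigma^2+o(\sigma^2)$ via Lemma~\ref{integrate-little o}. This is exactly how the paper closes its own version of the estimate.
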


\begin{proof}
First the triangle inequality gives

\begin{equation}\label{radial derivatives}
\abs{\frac{\partial}{\partial r}d(u,Q)} \le \abs{\frac{du}{dr}}.
\end{equation}

If $X$ is NPC, apply the divergence theorem and the Cauchy-Schwarz inequality to Proposition~\ref{target variation}:

\begin{eqnarray*}
E(\sigma) & \le & \frac{1}{2}\int_{B_\sigma}\Delta d^2(u,Q)d\mu\\
 & = & \frac{1}{2}\int_{\partial B_\sigma}\frac{\partial}{\partial r}d^2(u,Q)d\Sigma\\
 & = & \int_{\partial B_\sigma}d(u,Q)\frac{\partial}{\partial r}d(u,Q)d\Sigma\\
 & \le & \left(I(\sigma)\int_{\partial B_\sigma}\abs{\frac{\partial u}{\partial r}}^2d\Sigma\right)^\frac{1}{2}.
\end{eqnarray*}

If $X$ is CAT(-1), apply the divergence theorem and Cauchy-Schwarz to Proposition~\ref{negative cat target variation}:

\begin{eqnarray*}
\frac{1}{2}\Delta d^2(u,Q) & = & d(u,Q)\Delta d(u,Q) + \abs{\nabla d(u,Q)}^2\\
 & \ge & \frac{d(u,Q)\cosh d(u,Q)}{\sinh d(u,Q)}\abs{\nabla u}^2 - \left(\frac{d(u,Q)\cosh d(u,Q)}{\sinh d(u,Q)} - 1\right)\abs{\nabla d(u,Q)}^2\\
\abs{\nabla u}^2 & \le & \tanh d(u,Q) \Delta d(u,Q) + \abs{\nabla d(u,Q)}^2\\
E(\sigma) & = & \int_{B_\sigma}\abs{\nabla u}^2\\
 & \le & \int_{B_\sigma}\tanh d(u,Q) \Delta d(u,Q) + \int_{B_\sigma}\abs{\nabla d(u,Q)}^2\\
 & = & \int_{\partial B_\sigma}\tanh d(u,Q)\frac{\partial}{\partial r}d(u,Q) + \int_{B_\sigma}\tanh^2 d(u,Q)\abs{\nabla d(u,Q)}^2\\
 & \le & \left(\int_{\partial B_\sigma}\tanh^2 d(u,Q)\right)^\frac{1}{2}\left(\int_{\partial B_\sigma}\abs{\frac{\partial u}{\partial r}}^2\right)^\frac{1}{2} + \int_{B_\sigma}d^2(u,Q)\abs{\nabla d(u,Q)}^2 + O(\sigma^{n+4}).
\end{eqnarray*}

Now, let's compare $\int_{\partial B_\sigma}\tanh^2 d(u,Q)$ with $I(\sigma)$. We have $\tanh^2(z) = z^2 - \frac{2}{3}z^4 + O(z^6)$, so
\begin{eqnarray*}
\frac{1}{I(\sigma)}\int_{\partial B_\sigma}\tanh^2 d(u,Q) & = & 1 - \frac{2}{3I(\sigma)}\int_{\partial B_\sigma}d^4(u,Q) + O(\sigma^{4})\\
 & = & 1 - \frac{2(2\pi:\pi(0) + \abs{\nabla u}^4(0))}{3(n+2)\abs{\nabla u}^2(0)}\sigma^2 + o(\sigma^2).
\end{eqnarray*}
From Lemma~\ref{grad d squared} we have
\[
\int_{B_\sigma}d^2(u,Q)\abs{\nabla d(u,Q)}^2 = \frac{\omega_n}{n+2}\pi:\pi(0)\sigma^{n+2}+o(\sigma^{n+2}) = \left[\frac{\pi:\pi(0)}{(n+2)\abs{\nabla u}^2(0)}\sigma^2 + o(\sigma^2)\right]E(\sigma).
\]
Moving this term to the other side and combining all the ingredients,

\begin{eqnarray*}
\Bigg[1 - \frac{\pi:\pi(0)}{(n+2)\abs{\nabla u}^2(0)}\sigma^2 + o(\sigma^2)\Bigg]E(\sigma) & \le & \Bigg[1 - \frac{2\pi:\pi(0)+\abs{\nabla u}^4(0)}{3(n+2)\abs{\nabla u}^2(0)}\sigma^2\\
 & & + o(\sigma^2)\Bigg]\left(I(\sigma)\int_{\partial B_\sigma}\abs{\frac{\partial u}{\partial r}}^2\right)^\frac{1}{2}.
\end{eqnarray*}

And hence

\[
E(\sigma) \le \left[ 1 + \frac{\pi:\pi(0)-\abs{\nabla u}^4(0)}{3(n+2)\abs{\nabla u}^2(0)}\sigma^2 + o(\sigma^2)\right]\left(I(\sigma)\int_{\partial B_\sigma}\abs{\frac{\partial u}{\partial r}}^2\right)^\frac{1}{2}.
\]
\end{proof}

It was shown in \cite{gromov-schoen} that for a suitable constant $c_1\ge 0$, the function
\[
e^{c_1\sigma^2}\frac{\sigma E(\sigma)}{I(\sigma)}
\]
is non-decreasing in $\sigma$. They used the monotnicity of this function to define the order of $u$ at $x_0$ as

\[
ord(x_0) = \lim_{\sigma\to 0} e^{c_1\sigma^2}\cdot\frac{\sigma E(\sigma)}{I(\sigma)}.
\]

At points where $\abs{\nabla u}^2\neq 0$, \cite{gromov-schoen} showed that $u$ has order $1$. This can also be seen via our expansions of $E$ and $I$ from the beginning of this section. We can now see how the ratio $\frac{\sigma E(\sigma)}{I(\sigma)}$ behaves asymptotically near $\sigma=0$:



\begin{proposition}\label{order}
Let $u:M\to X$ be a harmonic map into NPC space. Then
\[
\frac{\sigma E(\sigma)}{I(\sigma)} \ge 1 + \frac{2Ric:\pi(0)}{3(n+2)\abs{\nabla u}^2(0)}\sigma^2 + o(\sigma^2).
\]
If $X$ is CAT(-1), then
\[
\frac{\sigma E(\sigma)}{I(\sigma)} \ge 1 + \frac{2Ric:\pi(0)+\abs{\nabla u}^4(0)-\pi:\pi(0)}{3(n+2)\abs{\nabla u}^2(0)}\sigma^2 + o(\sigma^2).
\]
\end{proposition}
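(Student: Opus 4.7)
The plan is to control the logarithmic derivative of $F(\sigma) := \sigma E(\sigma)/I(\sigma)$ and integrate from $\sigma = 0$. Since $E(\sigma) = \omega_n\abs{\nabla u}^2(x_0)\sigma^n + o(\sigma^n)$ by the Lebesgue point property and $I(\sigma) = \omega_n\abs{\nabla u}^2(x_0)\sigma^{n+1} + o(\sigma^{n+1})$ by Lemma~\ref{d squared}, and because we are working at a point where $\abs{\nabla u}^2(x_0) \neq 0$, one has $F(0^+) = 1$. Thus any lower bound of the form $(\log F)'(\sigma) \ge c\,\sigma + o(\sigma)$ integrates via Lemma~\ref{integrate-little o} to $\log F(\sigma) \ge \tfrac{c}{2}\sigma^2 + o(\sigma^2)$, which gives $F(\sigma) \ge 1 + \tfrac{c}{2}\sigma^2 + o(\sigma^2)$.

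To compute $(\log F)' = \tfrac{1}{\sigma} + E'/E - I'/I$, Lemma~\ref{monotonicity1} rearranges directly to $\tfrac{E'}{E} = \tfrac{n-2}{\sigma} + \tfrac{2J}{E} + \tfrac{A}{\sigma E}$, where $J(\sigma) = \int_{\partial B_\sigma}\abs{\partial u/\partial r}^2 d\Sigma$ and $A(\sigma) = \tfrac{\omega_n(2Ric:\pi - S\abs{\nabla u}^2)}{3(n+2)}\sigma^{n+2} + o(\sigma^{n+2})$. For $I'/I$, I would parametrize $\partial B_\sigma$ by the unit sphere in exponential coordinates and expand the induced area element as $d\Sigma = \sigma^{n-1}\bigl(1 - \tfrac{\sigma^2}{6}Ric(\omega,\omega) + O(\sigma^3)\bigr)d\omega$, which follows from Proposition~\ref{metric expansion} applied to the induced metric on the sphere. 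Differentiating $I(\sigma) = \int_{S^{n-1}} d^2(u(\sigma\omega),Q) h(\sigma,\omega)d\omega$, substituting Lemma~\ref{d squared} for $d^2$, and evaluating the resulting Gaussian integral of $\pi_{ij}(0)\omega^i\omega^j\,Ric_{k\ell}(0)\omega^k\omega^\ell$ via Proposition~\ref{quadratic form}(3) yields
\[
\frac{I'}{I} = \frac{n-1}{\sigma} + \frac{K}{I} + \frac{\Psi}{I}, \qquad \Psi(\sigma) = -\frac{\omega_n\bigl(S\abs{\nabla u}^2 + 2Ric:\pi\bigr)}{3(n+2)}\sigma^{n+2} + o(\sigma^{n+2}),
\]
with $K(\sigma) = \int_{\partial B_\sigma}\partial_r d^2(u,Q) d\Sigma$.

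The $\tfrac{1}{\sigma}$ and $\tfrac{n-1}{\sigma}$ terms cancel, leaving $(\log F)' = \tfrac{2J}{E} - \tfrac{K}{I} + \tfrac{A}{\sigma E} - \tfrac{\Psi}{I}$. Cauchy-Schwarz (as in Lemma~\ref{energy bound}) gives $K = 2\int d\,\partial_r d\,d\Sigma \le 2\sqrt{IJ}$, so $-K/I \ge -2\sqrt{J/I}$. Lemma~\ref{energy bound} gives $J \ge E^2/(\alpha^2 I)$ with $\alpha = 1$ in the NPC case and $\alpha = 1 + \beta\sigma^2 + o(\sigma^2)$, $\beta = \tfrac{\pi:\pi - \abs{\nabla u}^4}{3(n+2)\abs{\nabla u}^2}$, in the CAT(-1) case. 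Setting $a = J/E$, $b = E/I$, the constraint $a \ge b/\alpha^2$ together with $\alpha < 2$ for small $\sigma$ forces the minimum of $2a - 2\sqrt{ab}$ to be attained at $a = b/\alpha^2$, giving $\tfrac{2J}{E} - 2\sqrt{J/I} \ge 2b(1-\alpha)/\alpha^2$; since $b \sim 1/\sigma$ this equals $-2\beta\sigma + o(\sigma)$, which vanishes in the NPC case.

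Using the leading behavior $\sigma E, I \sim \omega_n\abs{\nabla u}^2\sigma^{n+1}$ to evaluate $\tfrac{A}{\sigma E} - \tfrac{\Psi}{I}$, the $S\abs{\nabla u}^2$ contributions cancel and one is left with $\tfrac{4Ric:\pi}{3(n+2)\abs{\nabla u}^2}\sigma + o(\sigma)$. Adding the CAT(-1) correction $-2\beta\sigma$ produces the extra $\tfrac{2(\abs{\nabla u}^4 - \pi:\pi)}{3(n+2)\abs{\nabla u}^2}\sigma$ term. Integrating these lower bounds for $(\log F)'$ from $0$ via Lemma~\ref{integrate-little o} and invoking $F(0^+) = 1$ yields the two stated inequalities. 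The main obstacle is the sphere-measure computation producing $\Psi$: one must carry the Ricci correction in the induced area through the differentiation, and the error term $e(x)$ from Lemma~\ref{d squared} (bounded only by $C|x|^2$ pointwise) must be shown to contribute $o(\sigma^{n+2})$ after being multiplied by the degree-one $Ric(\omega,\omega)$ factor coming from $\partial_\sigma h$, which follows from the almost-everywhere decay of $\sigma^{-(n+1)}\int_{\partial B_\sigma}e(x)d\Sigma$.
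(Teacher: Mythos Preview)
Your proposal is correct and follows essentially the same approach as the paper: both compute $(\log F)'$ by combining Lemma~\ref{monotonicity1} for $E'/E$ with a direct differentiation of $I(\sigma)$ (including the Ricci correction to the sphere area element and the evaluation of $\int_{\partial B_\sigma}d^2(u,Q)\,Ric_0(x,x)$ via Proposition~\ref{quadratic form}(3)), then bound $2J/E - K/I$ using Cauchy--Schwarz together with Lemma~\ref{energy bound}, and integrate. The only cosmetic difference is that you phrase the last step as a constrained minimization of $2a - 2\sqrt{ab}$, whereas the paper simply chains the inequality $2J/E \ge 2\alpha^{-1}(J/I)^{1/2}$ with $K/I \le 2(J/I)^{1/2}$ and uses $(J/I)^{1/2} = \sigma^{-1} + o(\sigma^{-1})$; the resulting bound $-2C\sigma + o(\sigma)$ is identical.
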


\begin{proof}
First note that

\begin{eqnarray*}
\lim_{\sigma\to 0}\frac{\sigma E(\sigma)}{I(\sigma)} & = & \lim_{\sigma\to 0}e^{-c_1\sigma^2}e^{c_1\sigma^2}\frac{\sigma E(\sigma)}{I(\sigma)}\\
 & = & \lim_{\sigma\to0}e^{-c_1\sigma^2}\lim_{\sigma\to 0}e^{c_1\sigma^2}\frac{\sigma E(\sigma)}{I(\sigma)}\\
 & = & 1\cdot ord(x_0).
\end{eqnarray*}

For an $L^\infty$ function $f$ on $M$, compute

\begin{eqnarray*}
\frac{d}{d\sigma}\int_{\partial B_\sigma}fd\Sigma & = & \int_{\partial B_\sigma}\frac{\partial}{\partial r}(f\sqrt{g})dS + \frac{n-1}{\sigma}\int_{B_\sigma}f\sqrt{g}dS\\
 & = & \int_{\partial B_\sigma}\frac{\partial f}{\partial r}d\Sigma -\frac{1}{3}\int_{\partial B_\sigma}f(x)\frac{Ric_0(x,x)}{\abs{x}}dS\\
 & &  + \frac{n-1}{\sigma}\int_{\partial B_\sigma}fd\Sigma + O(\sigma^2)\int_{\partial B_\sigma}fdS.
\end{eqnarray*}
Applying this to $f(x) = d^2(u(x),Q) = d^2(u(x),u(0))$, we compute a logarithmic derivative:
\begin{eqnarray*}
\frac{I'(\sigma)}{I(\sigma)} & = & \frac{1}{I(\sigma)}\int_{\partial B_\sigma}\frac{\partial}{\partial r}d^2(u,Q)d\Sigma +\frac{n-1}{\sigma}+O(\sigma^2)\\
 & & - \frac{1}{3\sigma I(\sigma)}\int_{\partial B_\sigma}d^2(u,Q)Ric_0(x,x)dS.
\end{eqnarray*}
Using Lemma~\ref{d squared} to rewrite $d^2(u,Q)$ and then part \ref{product quadratic on sphere} from Proposition~\ref{quadratic form}, compute
\begin{eqnarray*}
\int_{\partial B_\sigma} d^2(u,Q)Ric_0(x,x)dS & = & \int_{\partial B_\sigma} (\pi_{ij}(0)x^ix^j+e(x)) R_{k\ell}(0)x^kx^\ell dS\\
 & = & \frac{\omega_n}{n+2}\left(2Ric:\pi(0) + S(0)\abs{\nabla u}^2(0)\right)\sigma^{n+3} + o(\sigma^{n+3}).
\end{eqnarray*}
Combining, see
\begin{eqnarray*}
\frac{I'(\sigma)}{I(\sigma)} & = & \frac{1}{I(\sigma)} \int_{\partial B_\sigma}\frac{\partial}{\partial r}d^2(u,Q)d\Sigma + \frac{n-1}{\sigma}\\
 & & - \frac{1}{3(n+2)}\left( \frac{2Ric:\pi(0)}{\abs{\nabla u}^2(0)} + S(0)\right)\sigma + o(\sigma).
\end{eqnarray*}

Now compute the logarithmic derivative of $E(\sigma)$:

\begin{eqnarray*}
\frac{E'(\sigma)}{E(\sigma)} & = & (E(\sigma))^{-1}\int_{\partial B_\sigma}\abs{\nabla u}^2d\Sigma\\
 & = & \frac{n-2}{\sigma} + \frac{2}{E(\sigma)}\int_{\partial B_\sigma}\abs{\frac{\partial u}{\partial r}}^2d\Sigma\\
 & & + \frac{1}{3(n+2)}\left(\frac{2Ric:\pi(0)}{\abs{\nabla u}^2(0)} - S(0)\right)\sigma + o(\sigma).
\end{eqnarray*}
This last line comes from Lemma~\ref{monotonicity1}. Now combine these:
\begin{eqnarray*}
\frac{d}{d\sigma}\log\left(\frac{\sigma E(\sigma)}{I(\sigma)}\right) & = & \frac{1}{\sigma} + \frac{E'(\sigma)}{E(\sigma)} - \frac{I'(\sigma)}{I(\sigma)}\\
 & = & - (I(\sigma))^{-1}\int_{\partial B_\sigma}\frac{\partial}{\partial r}(d^2(u,Q))d\Sigma\\
 & & + 2(E(\sigma))^{-1}\int_{\partial B_\sigma}\abs{\frac{\partial u}{\partial r}}^2d\Sigma\\
 & & + \frac{1}{3(n+2)}\cdot\frac{4Ric:\pi(0)}{\abs{\nabla u}^2(0)}\sigma + o(\sigma).
\end{eqnarray*}

From Lemma~\ref{energy bound} we have, for some constant $C$ depending on the curvature bound for $X$,

\begin{eqnarray*}
E(\sigma) & \le & \left(1+C\sigma^2+o(\sigma^2)\right)\left(I(\sigma)\int_{\partial B_\sigma}\abs{\frac{\partial u}{\partial r}}^2d\Sigma\right)^\frac{1}{2}\\
\frac{2}{E(\sigma)}\int_{\partial B_\sigma}\abs{\frac{\partial u}{\partial r}}^2d\Sigma & \ge & 2\left(1-C\sigma^2+o(\sigma^2)\right)\left(\frac{1}{I(\sigma)}\int_{\partial B_\sigma}\abs{\frac{\partial u}{\partial r}}^2d\Sigma\right)^\frac{1}{2}.
\end{eqnarray*}
And from Cauchy-Schwarz, we get
\[
\frac{1}{I(\sigma)}\int_{\partial B_\sigma}\frac{\partial}{\partial r}(d^2(u,Q))d\Sigma \le 2\left(\frac{1}{I(\sigma)}\int_{\partial B_\sigma}\abs{\frac{\partial u}{\partial r}}^2d\Sigma\right)^\frac{1}{2}.
\]
Note also that $\frac{1}{I(\sigma)}\int_{\partial B_\sigma}\abs{\frac{\partial u}{\partial r}}^2d\Sigma = \sigma^{-2}+o(\sigma^{-2})$, and so conclude
\[
\frac{d}{d\sigma}\log\left(\frac{\sigma E(\sigma)}{I(\sigma)}\right) \ge \left(\frac{4Ric:\pi(0)}{3(n+2)\abs{\nabla u}^2(0)}-2C\right)\sigma + o(\sigma).
\]

Integrating over $\sigma$, recall Lemma~\ref{integrate-little o} and see

\[
\log\left(\frac{\sigma E(\sigma)}{I(\sigma)}\right) - \log(ord(0)) \ge \left(\frac{2Ric:\pi(0)}{3(n+2)\abs{\nabla u}^2(0)}-C\right)\sigma^2 + o(\sigma^2).
\]
Recall $ord(x_0)=1$ and $e^t=1+t+O(t^2)$. Taking exponentials now shows
\[
\frac{\sigma E(\sigma)}{I(\sigma)} \ge 1 + \left(\frac{2Ric:\pi(0)}{3(n+2)\abs{\nabla u}^2(0)}-C\right)\sigma^2 + o(\sigma^2).
\]
The value of $C$ for NPC or CAT(-1) targets that comes from Lemma~\ref{energy bound} yields the desired results.

\end{proof}

As a consequence of the order growth we have the following useful lemma:

\begin{lemma}\label{A vs I}
Let $u:M\to X$ be a harmonic map. If $X$ is NPC, we have
\[
\sigma\int_{\partial B_\sigma}\abs{\frac{\partial u}{\partial r}}^2d\Sigma \ge \left(1+\frac{2Ric:\pi(0)}{3(n+2)\abs{\nabla u}^2(0)}\sigma^2+o(\sigma^2)\right)E(\sigma).
\]
If $X$ is CAT(-1) then we have
\[
\sigma\int_{\partial B_\sigma}\abs{\frac{\partial u}{\partial r}}^2d\Sigma \ge \left(1+\frac{2Ric:\pi(0)+3\abs{\nabla u}^4(0)-3\pi:\pi(0)}{3(n+2)\abs{\nabla u}^2(0)}\sigma^2+o(\sigma^2)\right)E(\sigma).
\]
\end{lemma}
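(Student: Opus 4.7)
The plan is to combine Lemma~\ref{energy bound} with Proposition~\ref{order} in a mostly algebraic way. For the NPC case, I would square the energy bound
\[
E(\sigma) \le \left(I(\sigma)\int_{\partial B_\sigma}\abs{\tfrac{\partial u}{\partial r}}^2 d\Sigma\right)^{\frac{1}{2}}
\]
and rearrange to get $\int_{\partial B_\sigma}\abs{\partial u/\partial r}^2 d\Sigma \ge E(\sigma)^2/I(\sigma)$. Multiplying by $\sigma$ rewrites the right hand side as $\frac{\sigma E(\sigma)}{I(\sigma)}\cdot E(\sigma)$, and then the NPC lower bound on $\sigma E(\sigma)/I(\sigma)$ from Proposition~\ref{order} plugs in to yield exactly the claimed inequality.

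For the CAT(-1) case, the same squaring step produces an extra factor of $\bigl[1+\tfrac{\pi:\pi(0)-\abs{\nabla u}^4(0)}{3(n+2)\abs{\nabla u}^2(0)}\sigma^2+o(\sigma^2)\bigr]^2$ on the right hand side. I would expand this square, which by Lemma~\ref{integrate-little o} becomes $1+\tfrac{2(\pi:\pi(0)-\abs{\nabla u}^4(0))}{3(n+2)\abs{\nabla u}^2(0)}\sigma^2+o(\sigma^2)$, and then invert it using the same lemma so the reciprocal is $1+\tfrac{2(\abs{\nabla u}^4(0)-\pi:\pi(0))}{3(n+2)\abs{\nabla u}^2(0)}\sigma^2+o(\sigma^2)$. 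Multiplying this reciprocal by the CAT(-1) lower bound $\tfrac{\sigma E(\sigma)}{I(\sigma)} \ge 1+\tfrac{2Ric:\pi(0)+\abs{\nabla u}^4(0)-\pi:\pi(0)}{3(n+2)\abs{\nabla u}^2(0)}\sigma^2+o(\sigma^2)$ from Proposition~\ref{order} and collecting the $\sigma^2$ coefficients produces $\tfrac{2Ric:\pi(0)+3\abs{\nabla u}^4(0)-3\pi:\pi(0)}{3(n+2)\abs{\nabla u}^2(0)}$, which is precisely the stated constant.

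There is no real obstacle here, as all the hard analytic work was done in establishing Lemma~\ref{energy bound} and Proposition~\ref{order}. The only thing to be careful about is keeping track of the $o(\sigma^2)$ remainders when multiplying and inverting the second-order expansions; the rules in Lemma~\ref{integrate-little o} (in particular $o(\sigma^k)o(\sigma^\ell)=o(\sigma^{k+\ell})$ and $1/(1+o(\sigma^k))=1+o(\sigma^k)$) make all these manipulations legitimate, so that higher-order cross terms are absorbed into the $o(\sigma^2)$ at the end.
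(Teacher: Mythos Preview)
Your proposal is correct and uses the same two ingredients (Lemma~\ref{energy bound} and Proposition~\ref{order}) as the paper, arriving at the same constant $C'-2C$ after the algebra. Your route is in fact a touch more streamlined than the paper's, which passes through the intermediate quantity $\sigma\sqrt{I(\sigma)\int_{\partial B_\sigma}\abs{\partial u/\partial r}^2}$ in two steps rather than squaring the energy bound outright, but the arguments are essentially identical.
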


\begin{proof}
There are constants $C$ (coming from Lemma~\ref{energy bound}) and $C'$ (coming from Proposition~\ref{order}) that depend on the curvature bound of $X$ such that

\begin{eqnarray*}
E(\sigma) & \le & (1+C\sigma^2+o(\sigma^2))\left(I(\sigma)\int_{\partial B_\sigma}\abs{\frac{\partial u}{\partial r}}^2d\Sigma\right)^\frac{1}{2}\\
\frac{\sigma E(\sigma)}{I(\sigma)} & \ge & 1 + C'\sigma^2 + o(\sigma^2).
\end{eqnarray*}

Combining these, see
\begin{eqnarray*}
\sigma\left(I(\sigma)\int_{\partial B_\sigma}\abs{\frac{\partial u}{\partial r}}^2d\Sigma\right)^\frac{1}{2} & \ge & (1-C\sigma^2+o(\sigma^2))\sigma E(\sigma)\\
 & \ge & (1 + (C'-C)\sigma^2 + o(\sigma^2))I(\sigma)\\
\sigma\int_{\partial B_\sigma}\abs{\frac{\partial u}{\partial r}}^2d\Sigma & \ge & (1+(C'-C)\sigma^2+o(\sigma^2))\left(I(\sigma)\int_{\partial B_\sigma}\abs{\frac{\partial u}{\partial r}}^2d\Sigma\right)^\frac{1}{2}\\
 & \ge & (1+(C'-2C)\sigma^2+o(\sigma^2))E(\sigma).
\end{eqnarray*}
Supplying the appropriate constants yields the desired results.
\end{proof}

Now we are prepared to prove the mean value inequality that will lead to our main result.

\begin{proposition}\label{mean value inequality}
Let $u:M\to X$ be a harmonic map, and let $x_0\in M$ be almost any point. If $X$ is NPC then
\[
\fint_{B_\sigma(x_0)}\abs{\nabla u}^2d\mu \ge \abs{\nabla u}^2(x_0) + \frac{Ric:\pi(x_0)}{n+2}\sigma^2 + o(\sigma^2).
\]
If $X$ is CAT(-1) then
\[
\fint_{B_\sigma(x_0)}\abs{\nabla u}^2d\mu \ge \abs{\nabla u}^2(x_0) + \frac{Ric:\pi(x_0)+\abs{\nabla u}^4(x_0)-\pi:\pi(x_0)}{n+2}\sigma^2 + o(\sigma^2).
\]
\end{proposition}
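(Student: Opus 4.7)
The plan is to combine the monotonicity formula from Lemma~\ref{monotonicity1} with the lower bound for the radial energy from Lemma~\ref{A vs I} to extract a differential inequality for $E(\sigma)$, and then pass from $E(\sigma)$ to the volume average $\fint_{B_\sigma}\abs{\nabla u}^2 d\mu$ using the asymptotic Bishop--Gromov expansion of Proposition~\ref{bishop-gromov}. The pleasing cancellation that makes the scalar curvature disappear from the final answer will happen when these two expansions are combined.

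First I would set $f(\sigma)=\fint_{B_\sigma}\abs{\nabla u}^2 d\mu = E(\sigma)/\abs{B_\sigma}$ and observe $E'(\sigma)=\int_{\partial B_\sigma}\abs{\nabla u}^2 d\Sigma$. Rewriting Lemma~\ref{monotonicity1} gives
\[
\sigma E'(\sigma) = (n-2)E(\sigma) + 2\sigma\int_{\partial B_\sigma}\abs{\tfrac{\partial u}{\partial r}}^2 d\Sigma + \frac{\omega_n\bigl(2Ric{:}\pi(x_0)-S(x_0)\abs{\nabla u}^2(x_0)\bigr)}{3(n+2)}\sigma^{n+2}+o(\sigma^{n+2}).
\]
Plug in the bound from Lemma~\ref{A vs I} and use $E(\sigma)=\omega_n\abs{\nabla u}^2(x_0)\sigma^n+o(\sigma^n)$ to convert the $\sigma^2 E(\sigma)$ term into an $\omega_n\sigma^{n+2}$ term. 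In the NPC case, this yields
\[
\frac{\sigma E'(\sigma)}{E(\sigma)} \ge n + \frac{6Ric{:}\pi(x_0)-S(x_0)\abs{\nabla u}^2(x_0)}{3(n+2)\abs{\nabla u}^2(x_0)}\sigma^2 + o(\sigma^2),
\]
and in the CAT(-1) case the numerator of the $\sigma^2$ coefficient acquires the extra contribution $6\abs{\nabla u}^4(x_0)-6\pi{:}\pi(x_0)$.

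Next I would compute $\frac{d}{d\sigma}\log f(\sigma) = E'(\sigma)/E(\sigma)-\abs{\partial B_\sigma}/\abs{B_\sigma}$, where the second ratio is expanded via Proposition~\ref{bishop-gromov} as $n/\sigma - S(x_0)\sigma/(3(n+2))+O(\sigma^2)$. This is the step where the $S(x_0)\abs{\nabla u}^2(x_0)$ pieces from the monotonicity expansion and from the Bishop--Gromov correction cancel exactly. After this cancellation one is left with
\[
\frac{d}{d\sigma}\log f(\sigma) \ge \frac{2\,Ric{:}\pi(x_0)}{(n+2)\abs{\nabla u}^2(x_0)}\sigma + o(\sigma)
\]
in the NPC case, and with the corresponding $\abs{\nabla u}^4-\pi{:}\pi$ additive term in the CAT(-1) case.

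Finally, I would integrate from $0$ to $\sigma$ using Lemma~\ref{integrate-little o} (the integral of $o(\sigma)$ is $o(\sigma^2)$), exponentiate using the identity $e^{t}=1+t+O(t^2)$ from the same lemma, and multiply through by $f(0)=\abs{\nabla u}^2(x_0)$. This delivers exactly the claimed mean value inequalities. The main obstacle is purely bookkeeping: one must keep the $o(\sigma^2)$ and $o(\sigma^{n+2})$ terms coherent through division by $E(\sigma)$, through the Bishop--Gromov expansion, and through exponentiation, and be sure to invoke Lemma~\ref{integrate-little o} at the integration step so that the accumulated errors genuinely stay $o(\sigma^2)$. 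Also worth noting: one uses that $x_0$ is a Lebesgue point for both $\pi$ and $\abs{\nabla u}^2$ with $\abs{\nabla u}^2(x_0)\neq 0$, which is assumed at the start of Section~\ref{Sbochner}; the statement ``almost any point'' covers both the Lebesgue-point condition and the easy case $\abs{\nabla u}^2(x_0)=0$, where the inequality is trivial since the right side is then $\frac{Ric{:}\pi(x_0)}{n+2}\sigma^2+o(\sigma^2)$ with $\pi(x_0)=0$.
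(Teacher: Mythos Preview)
Your proposal is correct and follows essentially the same route as the paper: both compute the logarithmic derivative of $\fint_{B_\sigma}\abs{\nabla u}^2d\mu$ by combining Lemma~\ref{monotonicity1}, Lemma~\ref{A vs I}, and the Bishop--Gromov expansion of Proposition~\ref{bishop-gromov}, observe the cancellation of the scalar-curvature terms, and then integrate and exponentiate via Lemma~\ref{integrate-little o}. The only cosmetic difference is that the paper works directly with $\frac{d}{d\sigma}\fint_{B_\sigma}\abs{\nabla u}^2d\mu$ (using the second form of Lemma~\ref{monotonicity1}) while you first isolate $\sigma E'(\sigma)/E(\sigma)$ (using the first form) before subtracting $\abs{\partial B_\sigma}/\abs{B_\sigma}$; the algebra and the handling of the degenerate case $\abs{\nabla u}^2(x_0)=0$ are identical.
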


\begin{proof}
If $\abs{\nabla u}^2(x_0) = 0$, then the inequality holds trivially. Let $x_0\in M$ with $\abs{\nabla u}^2(x_0)\neq 0$ be a point where the previous propositions and lemmas apply and identify $x_0$ with $0\in T_{x_0}M$. Compute, setting for simplicity $V_\sigma = Vol(B_\sigma(x_0))$:

\begin{eqnarray*}
\frac{d}{d\sigma}\fint_{B_\sigma}\abs{\nabla u}^2d\mu & = & \frac{1}{V_\sigma^2}\left( V_\sigma\int_{\partial B_\sigma}\abs{\nabla u}^2d\Sigma - V'_\sigma E(\sigma)\right)\\
 & = & \frac{1}{V_\sigma} \Big[ \int_{\partial B_\sigma}\abs{\nabla u}^2 d\Sigma - \frac{n}{\sigma} E(\sigma)\\
 & & + \left( \frac{1}{3(n+2)}S(0)\sigma + O(\sigma^2)\right) E(\sigma) \Big]\\
 & = & \frac{1}{V_\sigma} \Bigg[ 2\int_{\partial B_\sigma} \abs{\frac{\partial u}{\partial r}}^2 d\Sigma - \frac{2}{\sigma} E(\sigma)\\
 & & + \left( \frac{1}{3(n+2)}\cdot\frac{2Ric:\pi(0)}{\abs{\nabla u}^2(0)}\sigma + o(\sigma)\right)E(\sigma)\Bigg]\\
 & = & \frac{2}{\sigma V_\sigma}\Bigg[ \sigma\int_{\partial B_\sigma}\abs{\frac{\partial u}{\partial r}}^2 d\Sigma - E(\sigma)\\
 & & + \left( \frac{Ric:\pi(0)}{3(n+2)\abs{\nabla u}^2(0)}\sigma^2 + o(\sigma^2)\right)E(\sigma)\Bigg].
\end{eqnarray*}
The comparison of $V_\sigma$ and $V'_\sigma$ comes from Proposition~\ref{bishop-gromov}. Recall Lemma~\ref{A vs I} with a constant $A$ depending on the curvature of $X$, and apply it as follows:

\begin{eqnarray*}
\sigma\int_{\partial B_\sigma}\abs{\frac{\partial u}{\partial r}}^2d\Sigma & \ge & (1+A\sigma^2+o(\sigma^2))E(\sigma)\\
\frac{d}{d\sigma}\fint_{B_\sigma}\abs{\nabla u}^2d\mu & \ge & \frac{2}{\sigma V_\sigma}\left(\left(\frac{Ric:\pi(0)}{3(n+2)\abs{\nabla u}^2(0)}+A\right)\sigma^2+o(\sigma^2)\right)E(\sigma)\\
\frac{d}{d\sigma}\log\fint_{B_\sigma}\abs{\nabla u}^2d\mu & \ge & \left(\frac{2Ric:\pi(0)}{3(n+2)\abs{\nabla u}^2(0)}+2A\right)\sigma + o(\sigma).
\end{eqnarray*}

Integrating with respect to $\sigma$, recall that we are working at a Lebesgue point for $\abs{\nabla u}^2$ and see
\begin{eqnarray*}
\fint_{B_\sigma}\abs{\nabla u}^2d\mu & \ge & \abs{\nabla u}^2(0) \exp \left( \left(\frac{Ric:\pi(0)}{(n+2)\abs{\nabla u}^2(0)}+A\right)\sigma^2 + o(\sigma^2)\right)\\
 & = & \abs{\nabla u}^2(0)\left(1 + \left(\frac{Ric:\pi(0)}{(n+2)\abs{\nabla u}^2(0)}+A\right)\sigma^2 + o(\sigma^2)\right).
\end{eqnarray*}
Substituting the appropriate value for $A$ from Lemma~\ref{A vs I} yields the desired result.
\end{proof}

And now we'd like to turn the mean value inequality into a differential inequality.

\begin{proposition}\label{subharmonicity}
Let $f$ be an $L^\infty$ function on a relatively compact Riemannian domain $\Omega$ of dimension $n$ satisfying for almost every $x_0\in \Omega$
\[
\fint_{B_\sigma(x_0)}fd\mu \ge f(x_0) + \varphi(x_0)\sigma^2 + o(\sigma^2).
\]
Then $f$ satisfies the weak differential inequality
\[
\frac{1}{2}\Delta f \ge (n+2)\varphi.
\]
\end{proposition}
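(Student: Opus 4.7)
The plan is to test the pointwise hypothesis against a non-negative smooth compactly supported function $\eta$ on $\Omega$ and pass to the limit $\sigma \to 0$ to recover the weak inequality $\int f\,\Delta\eta\,d\mu \ge 2(n+2)\int \varphi\eta\,d\mu$. First I would multiply the hypothesis by $\eta(x_0)$ and integrate. The error $o(\sigma^2)$ in the hypothesis is, by the paper's own definition, $L^\infty$-bounded by $C\sigma^2$ uniformly and converges to $0$ pointwise a.e., so dominated convergence gives
\[
\int_\Omega \eta(x_0)\fint_{B_\sigma(x_0)} f\,d\mu\,d\mu(x_0) \;\ge\; \int \eta f\,d\mu + \sigma^2\int \eta\varphi\,d\mu + o(\sigma^2).
\]

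The key step is to expand the left-hand side asymptotically in $\sigma$. Applying Fubini (valid because $f\in L^\infty$ and $\eta$ is compactly supported), together with the symmetry $\chi_{B_\sigma(x_0)}(x)=\chi_{B_\sigma(x)}(x_0)$, I would rewrite
\[
\int \eta(x_0)\fint_{B_\sigma(x_0)} f\,d\mu\,d\mu(x_0) \;=\; \int_\Omega f(x)\int_{B_\sigma(x)} \frac{\eta(x_0)}{|B_\sigma(x_0)|}\,d\mu(x_0)\,d\mu(x).
\]
From Proposition~\ref{bishop-gromov} one has $|B_\sigma(x_0)|^{-1} = (\omega_n\sigma^n)^{-1}\bigl(1+\tfrac{S(x_0)}{6(n+2)}\sigma^2 + O(\sigma^3)\bigr)$, and for smooth $\eta$, combining its Taylor expansion with the volume-density expansion of Proposition~\ref{metric expansion} yields $\fint_{B_\sigma(x)} \eta\,d\mu = \eta(x) + \tfrac{\sigma^2}{2(n+2)}\Delta\eta(x) + O(\sigma^3)$. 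The scalar-curvature corrections from the two $\sqrt{g}$ factors cancel exactly, leaving
\[
\int_{B_\sigma(x)}\frac{\eta(x_0)}{|B_\sigma(x_0)|}\,d\mu(x_0) \;=\; \eta(x) + \tfrac{\sigma^2}{2(n+2)}\Delta\eta(x) + O(\sigma^3),
\]
so the left-hand side equals $\int \eta f\,d\mu + \tfrac{\sigma^2}{2(n+2)}\int f\,\Delta\eta\,d\mu + O(\sigma^3)$ (the remainder integrates against $|f|$ on the compact support of $\eta$).

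Combining the two inequalities, subtracting $\int \eta f\,d\mu$, dividing by $\sigma^2$, and letting $\sigma \to 0$ yields $\tfrac{1}{2(n+2)}\int f\,\Delta\eta\,d\mu \ge \int \varphi\eta\,d\mu$ for every non-negative smooth test $\eta$, which is exactly the weak form of $\tfrac{1}{2}\Delta f \ge (n+2)\varphi$. The main obstacle I anticipate is the asymptotic expansion of the left-hand-side ball average: because $|B_\sigma(x_0)|$ depends on $x_0$, the Fubini swap produces genuine curvature corrections, and one must verify that the scalar-curvature $\tfrac{S}{6(n+2)}\sigma^2$ contributions from $|B_\sigma(x_0)|^{-1}$ and from the inner mean of $\eta$ against $\sqrt{g}$ precisely cancel to second order. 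Once that cancellation is in hand, everything else is a routine dominated-convergence argument, and the characterization of $o(\sigma^k)$ established earlier in the paper supplies the uniform bound needed to pass the pointwise little-$o$ through the integral.
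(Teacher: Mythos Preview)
Your proposal is correct and follows essentially the same approach as the paper: integrate against a test function $\eta$, use Fubini to move the ball-average from $f$ onto $\eta$, apply the smooth expansion $\fint_{B_\sigma}\eta = \eta + \tfrac{\sigma^2}{2(n+2)}\Delta\eta + O(\sigma^3)$, and pass to the limit. The only difference is the order of operations---the paper starts from $\int f\,\Delta\eta$ and works toward the hypothesis, while you start from the hypothesis and work toward $\int f\,\Delta\eta$---and the paper handles the $x_0$-dependence of $|B_\sigma(x_0)|$ via the estimate $|B_\sigma(x)|^{-1}-|B_\sigma(y)|^{-1}=O(d(x,y)\sigma^{2-n})$ rather than your ``cancellation'' language, but these amount to the same observation.
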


\begin{proof}
First we will need a computation about smooth functions. For a smooth function $h$ on $\Omega$, take normal coordinates about $x_0\in \Omega$, and the Taylor expansion of $h$ about $x_0$ in these coordinates is

\[
h(x) = h(0) + \partial_ih(0)x^i + \frac{1}{2}\partial_i\partial_jh(0)x^ix^j + O(\abs{x}^2).
\]

Now compute the average of $h$ on $B_\sigma(x_0)$, recalling that the volume density in normal coordinates is $\sqrt{g}=1+O(\abs{x}^2)$ and that the Riemannian volume of $B_\sigma$ is $\omega_n\sigma^n + O(\sigma^{n+2})$:
\begin{eqnarray*}
\fint_{B_\sigma}h d\mu & = & h(0) + \partial_i h(0)\fint_{B_\sigma}x^id\mu + \frac{1}{2}\partial_i\partial_jh(0)\fint_{B_\sigma}x^ix^jd\mu + O(\sigma^3)\\
 & = & h(0) + \partial_i h(0)\fint_{B_\sigma}x^idx + \frac{1}{2}\partial_i\partial_j h(0)\fint_{B_\sigma}x^ix^jdx + O(\sigma^3).
\end{eqnarray*}
Integrating now with respect to a Euclidean metric, $\fint_{B_\sigma}x^idx = 0$, and Proposition~\ref{quadratic form} implies
\[
\fint_{B_\sigma}\frac{1}{2}\partial_i\partial_jh(0)x^ix^jdx = \frac{1}{2(n+2)}\sigma^2\partial_i\partial_ih(0).
\]
And in normal coordinates, $\partial_i\partial_i h(0) = \Delta h(0)$, so
\[
\fint_{B_\sigma(x_0)}h d\mu = h(x_0) + \frac{1}{2(n+2)}\sigma^2\Delta h(x_0) + O(\sigma^3).
\]

This $O(\sigma^3)$ term depends on the geometry of $\Omega$ as well as higher order information about the function $h$. Note that this formula, when combined with the hypothesis, immediately yields the result for the smooth function $h$.

For $f\in L^1$, we will integrate against a smooth test function $\eta$. It will be useful to be able to move an average value operator from one function onto the other. In order to do so, we must first compare volumes of balls centered at different points:

\begin{eqnarray*}
\abs{B_\sigma(p)} & = & \omega_n\sigma^n\left(1 - \frac{S(p)}{6(n+2)}\sigma^2 + O(\sigma^3)\right)\\
\frac{1}{\abs{B_\sigma(x)}} - \frac{1}{\abs{B_\sigma(y)}} & = & \frac{1}{\omega_n\sigma^n}\left( \frac{S(x) - S(y)}{6(n+2)}\sigma^2 + O(\sigma^3)\right)\\
 & = & O(d(x,y)\sigma^{2-n}).
\end{eqnarray*}

For $\sigma < \frac{1}{2}d(supp(\eta),\partial\Omega)$ we can make sense of the following:

\begin{eqnarray*}
\int \eta(x)\left(\fint_{B_\sigma(x)}f(y)d\mu(y)\right)d\mu(x) & = & \int_{d(x,y)<\sigma}\frac{1}{\abs{B_\sigma(x)}}\eta(x)f(y)d\mu(y)d\mu(x)\\
 & = & \int f(y)\left(\frac{1}{\abs{B_\sigma(y)}}+O(\sigma^{3-n})\right)\int_{B_\sigma(y)}\eta(x)d\mu(x)d\mu(y)\\
 & = & \int f(y)\left(\fint_{B_\sigma(y)}\eta(x)d\mu(x) + O(\sigma^3)\eta(y)\right)d\mu(y).
\end{eqnarray*}

Now since $\eta$ is smooth it satisfies

\[
\fint_{B_\sigma(x)}\eta d\mu = \eta(x) + \frac{1}{2(n+2)}\Delta\eta(x)\sigma^2 + O(\sigma^3).
\]

Now we are in a position to integrate $\Delta\eta$ against $f$:
\begin{eqnarray*}
\int f\Delta\eta d\mu & = & \frac{2(n+2)}{\sigma^2}\int f(x)\left(\fint_{B_\sigma(x)}\eta d\mu - \eta(x) + O(\sigma^3)\right)d\mu(x)\\
 & = & \frac{2(n+2)}{\sigma^2}\int \eta(y)\left( \fint_{B_\sigma(y)}f d\mu + O(\sigma^3)f(y) - f(y)\right)d\mu(y)  + O(\sigma)\\
 & \ge & 2(n+2)\int\eta(y)(\varphi(y)+o(1))d\mu(y)+O(\sigma).
\end{eqnarray*}
Taking $\sigma\to 0$ yields the desired weak inequality
\[
\int f\Delta\eta d\mu \ge \int 2(n+2)\varphi\eta d\mu.
\]
\end{proof}





\begin{proof1}
From Proposition~\ref{mean value inequality} we have
\[
\fint_{B_\sigma(x)}\abs{\nabla u}^2d\mu \ge \varphi(x).
\]
Here the function $\varphi$ depends on the Ricci curvatures of $M$ and the curvature bound for $X$. Applying Proposition~\ref{subharmonicity}, we get our result.
\end{proof1}

\begin{remark}
Compare to the Bochner formula of \cite{eells-sampson}:

\[
\frac{1}{2}\Delta\abs{\nabla u}^2 = \abs{\nabla du}^2 + \la Ric^M(\nabla u),\nabla u\ra - \la R^X(\nabla u,\nabla u)\nabla u,\nabla u\ra.
\]

The terms $Ric:\pi$ that have appeared are exactly analogous to the $\la Ric(\nabla u),\nabla u\ra$ that appears in the classical formula. In addition, when the target is a manifold of curvature $-1$, the sectional curvature term $\langle R^X(\nabla u,\nabla u)\nabla u,\nabla u\rangle$ simplifies to $g^{ij}g^{k\ell}(\pi_{ij}\pi_{k\ell}-\pi_{ik}\pi_{j\ell}) = \abs{\nabla u}^4-\pi:\pi$.
\end{remark}

\begin{corollary}
Let $u:M\to X$ be a harmonic map from a compact manifold $M$ of non-negative Ricci curvatures into an NPC space $X$. Then $\abs{\nabla u}^2\equiv 0$. If the Ricci curvatures of $M$ are positive somewhere, then $u$ is a constant map. If $X$ is CAT(-1) and $u$ is not a constant map, then the rank of $\pi$ is almost everywhere equal to one.
\end{corollary}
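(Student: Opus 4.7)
The plan is to apply Theorem~\ref{main1} directly and read off the pointwise vanishing from integration over the compact domain, then translate that vanishing into rank information on $\pi$.

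First, observe that the pull-back tensor $\pi$ is pointwise positive semi-definite (as a symmetric form coming from squared directional derivatives), and $\mathrm{Ric}\ge 0$ on $M$. Since the trace of a product of two symmetric positive semi-definite matrices is non-negative, we have $\mathrm{Ric}:\pi \ge 0$ almost everywhere. By Theorem~\ref{main1}, $|\nabla u|^2$ satisfies the weak inequality $\tfrac12\Delta|\nabla u|^2 \ge 0$, so $|\nabla u|^2$ is weakly subharmonic on the compact Riemannian manifold $M$. A standard argument (test against the constant function $1$ and against $|\nabla u|^2$ itself, or invoke the weak maximum principle for $L^\infty$ subsolutions on a closed manifold) forces $|\nabla u|^2$ to be constant and $\Delta|\nabla u|^2\equiv 0$ in the weak sense. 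This is the ``energy density is constant'' half of the claim.

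Next, integrating the NPC Bochner inequality over $M$ yields $0 = \int_M \tfrac12\Delta|\nabla u|^2\, d\mu \ge \int_M \mathrm{Ric}:\pi\, d\mu \ge 0$, so $\mathrm{Ric}:\pi \equiv 0$ almost everywhere. At a point $x_0$ where $\mathrm{Ric}_{x_0}$ is positive definite, the identity $\mathrm{Ric}:\pi(x_0)=0$ combined with $\pi(x_0)$ being positive semi-definite forces $\pi(x_0) = 0$ (diagonalize $\pi(x_0)$ in an orthonormal frame and note that each $\mathrm{Ric}(e_i,e_i)\lambda_i = 0$ with $\mathrm{Ric}(e_i,e_i)>0$ gives $\lambda_i=0$). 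Hence $|\nabla u|^2(x_0)=0$, and by the constancy established above, $|\nabla u|^2\equiv 0$ on $M$, i.e.\ $u$ is constant.

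For the CAT($-1$) statement, apply the stronger inequality in Theorem~\ref{main1}: integration over $M$ combined with $\int_M \Delta|\nabla u|^2 = 0$ gives
\[
0 \;\ge\; \int_M \bigl(\mathrm{Ric}:\pi + |\nabla u|^4 - \pi:\pi\bigr)\, d\mu.
\]
All three ingredients are pointwise non-negative: $\mathrm{Ric}:\pi \ge 0$ as before, and writing $\pi$ in an orthonormal basis that diagonalizes it with eigenvalues $\lambda_1,\ldots,\lambda_n\ge 0$ gives $|\nabla u|^4 - \pi:\pi = \bigl(\sum_i\lambda_i\bigr)^2 - \sum_i \lambda_i^2 = 2\sum_{i<j}\lambda_i\lambda_j\ge 0$, with equality exactly when at most one $\lambda_i$ is nonzero, i.e.\ $\mathrm{rank}\,\pi\le 1$. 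Consequently $|\nabla u|^4 - \pi:\pi \equiv 0$ almost everywhere, so $\mathrm{rank}\,\pi\le 1$ almost everywhere. If $u$ is not constant then $|\nabla u|^2$ is a positive constant, so $\pi\ne 0$ a.e., forcing the rank to equal one.

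The only mildly delicate step is concluding constancy of a weakly subharmonic $L^\infty$ function on closed $M$, which is standard once one is willing to use $|\nabla u|^2\in L^\infty$ (guaranteed by the local Lipschitz regularity proved earlier in \S\ref{Sdomain}); everything else is pointwise linear algebra applied under the integral sign.
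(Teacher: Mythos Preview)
Your argument is correct and follows essentially the same route as the paper: apply Theorem~\ref{main1}, use compactness of $M$ to force $\Delta|\nabla u|^2=0$ and hence constancy, then read off the remaining conclusions from the vanishing of the right-hand side. The only cosmetic difference is in the positive-Ricci step: the paper passes to an open set where $\mathrm{Ric}\ge m g$ and uses $\tfrac12\Delta|\nabla u|^2\ge m|\nabla u|^2$ there, while you instead integrate to get $\mathrm{Ric}:\pi\equiv 0$ a.e.\ and then diagonalize $\pi$ at a point of positive Ricci; both arrive at $|\nabla u|^2=0$ on a set of positive measure, which by constancy finishes the job. (One tiny caveat: the equality $\mathrm{Ric}:\pi(x_0)=0$ is only known almost everywhere, so strictly speaking you should say ``at almost every point of the open set where $\mathrm{Ric}>0$'' rather than ``at the point $x_0$''; this does not affect the conclusion.)
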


\begin{proof}
Theorem~\ref{main1} implies that over $M$ we have
\[
\Delta\abs{\nabla u}^2 \ge 0.
\]
Since $M$ is compact,
\[
\int_M \Delta\abs{\nabla u}^2 d\mu = 0.
\]
Hence we must have $\Delta\abs{\nabla u}^2 = 0$ almost everywhere, so $\abs{\nabla u}^2$ is constant.

If in addition the Ricci curvatures of $M$ are positive at a single point of $M$, there is an open set $\Omega$ where the Ricci curvatures are strictly positive. Theorem~\ref{main1} now says that over $\Omega$ there is $m>0$ so that
\[
\frac{1}{2}\Delta\abs{\nabla u}^2 \ge m\abs{\nabla u}^2.
\]
Since $\Delta\abs{\nabla u}^2 = 0$, we must have $\abs{\nabla u}^2\equiv 0$ on $\Omega$. Since $\abs{\nabla u}^2$ is constant, it must be $0$ on all of $M$. That is, $u$ is a constant map.

If $X$ is CAT(-1) then the vanishing of $\abs{\nabla u}^4-\pi:\pi$ implies that $\pi$ has at most one positive eigenvalue. Since $\pi$ is positive semi-definite, if it is not zero then it has rank one.
\end{proof}

\begin{proof2}
Let $M$ be a closed hyperbolic manifold of dimension $n$. A map $u:M\to X$ being conformal means that the pull-back tensor satisfies $\pi = \lambda g$ for a non-negative function $\lambda$. If $u$ is harmonic and $X$ is CAT(-1) then Theorem~\ref{main1} says

\[
\frac{n}{2}\Delta\lambda \ge -n\lambda + (n\lambda)^2 - n\lambda^2 = n\lambda\left((n-1)\lambda-1\right).
\]

At a point where $\lambda$ is maximized, $\Delta\lambda\le 0$. At such a point,

\[
\lambda\le\frac{1}{n-1}.
\]
\end{proof2}

In the case that $X$ is CAT(-1) and so $\pi$ has rank one, we expect that the map $u$ should map into a geodesic on $X$. We will approach this problem in a later paper, as it will require several new tools. In the meantime, for a harmonic map from a flat torus into an NPC space, we can analyze all the inequalities we used in order to show the map is totally geodesic of harmonic maps. Since the energy density is constant, if it is not identically $0$ then every point has order $1$. The following arguments (essentially those presented in \cite{gromov-schoen} Lemma $3.2$) show that such a map must be totally geodesic:

\begin{proof3}
If $M$ is compact and flat, then in normal coordinates around any point, the metric is constant. Thus the formula \eqref{monotonicity-gs} is simply

\[
(2-n)\int_{B_\sigma(x)}\abs{\nabla u}^2 dx + \sigma\int_{B_\sigma(x)}\left[\abs{\nabla u}^2 - 2\abs{\frac{\partial u}{\partial r}}^2\right]dS = 0.
\]

The further lemmas and propositions then follow with no error terms. In particular, since $\abs{\nabla u}^2$ is constant by Theorem~\ref{main1}, we have
\[
0 = \frac{d}{d\sigma}\fint_{B_\sigma}\abs{\nabla u}^2d\mu \ge 0.
\]
Hence all of the inequalities used must be equalities. The Cauchy-Schwarz inequality gives a constant (depending on $\sigma$) so that on $\partial B_\sigma$ we have

\[
\frac{\partial}{\partial r}d(u,Q) = c_\sigma d(u,Q).
\]

Equation \eqref{radial derivatives} gives almost everywhere
\[
\abs{\frac{\partial}{\partial r}d(u,Q)} = \abs{\frac{\partial u}{\partial r}}.
\]
And from $\int_{\partial B_\sigma}d^2(u,Q)dS = \sigma^2\int_{\partial B_\sigma}\abs{\frac{\partial u}{\partial r}}^2dS$ we have $c_\sigma = \frac{1}{\sigma}$. Now we see

\[
\abs{\frac{\partial u}{\partial r}} = \frac{\partial}{\partial r}d(u,Q) = \frac{1}{\sigma}d(u,Q).
\]

First note that for $x$ near $0$ we calculate
\[
d(u(x),u(0)) = \int_0^x \frac{\partial}{\partial r}d(u,Q)dt = \int_0^x\abs{\frac{\partial u}{\partial r}}dt.
\]
And this last integral calculates the length of the image of the geodesic from $0$ to $x$. Hence the image of this geodesic segment is a geodesic in $X$. Now from
\[
\frac{\partial}{\partial r}d(u(x),u(0)) = \frac{1}{\abs{x}}d(u(x),u(0))
\]
conclude that the image of the unit speed geodesic from $0$ to $x$ is a constant speed geodesic in $X$. Since harmonic maps into NPC spaces are locally Lipschitz continuous (c.f.~\cite{gromov-schoen},\cite{korevaar-schoen1}), the speed of the image geodesics in $X$ depends only on the direction of the geodesics in $M$. Hence $u$ is totally geodesic.
\end{proof3}

Classically, the Bochner formula proves that any map from a space of non-negative Ricci curvature to a space of non-positive curvature is totally geodesic. The methods of this paper are not quite strong enough to detect this when the domain is not flat. The sectional curvatures of the domain influence many of the $o(\sigma^k)$ error terms. In order to show the map is totally geodesic, we need all of our inequalities to be equalities, but these error terms interfere. We nonetheless phrase this as a conjecture.

\begin{conjecture}
Any harmonic map from a Riemannian manifold of non-negative Ricci curvatures to an NPC metric space is totally geodesic.
\end{conjecture}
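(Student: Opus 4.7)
The plan is to strengthen Theorem~\ref{main1} by extracting a non-negative singular analogue of the Hessian term $\abs{\nabla du}^2$ from the classical Eells-Sampson formula. Since the corollary to Theorem~\ref{main1} already gives that $\abs{\nabla u}^2$ is constant on a compact $M$ with $Ric\ge0$, and since $Ric:\pi\ge0$ under those assumptions, any inequality of the shape
\[
\frac{1}{2}\Delta\abs{\nabla u}^2 \ge Ric:\pi + T(u),
\]
with $T(u)\ge0$ and $T(u)=0$ characterizing totally geodesic maps, would force $T(u)\equiv0$ upon integration over the compact domain. In the non-compact case one could then propagate the conclusion via exhaustion by balls together with a local version of the same mean-value argument.

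First I would revisit Lemma~\ref{d squared} and push the expansion of $d^2(u(x),u(0))$ one order further, to
\[
d^2(u(x),u(0)) = \pi_{ij}(0)x^ix^j + C_{ijk}(0)x^ix^jx^k + o(\abs{x}^3),
\]
where in the smooth case $C_{ijk}$ is a symmetric cubic form built from $du$ and $\nabla du$ whose norm vanishes exactly when $u$ is totally geodesic at $x_0$. In the NPC setting one candidate construction of $C$ proceeds by using quadruples of points together with repeated subembeddings into $\R^2$, in the spirit of Lemma~\ref{grad d squared}; an alternative route is a second-order variant of Korevaar-Schoen's directional energy. With $C$ in hand I would redo the expansions driving Lemma~\ref{monotonicity1}, Proposition~\ref{order}, Lemma~\ref{A vs I}, and Proposition~\ref{mean value inequality}, this time retaining the cubic-in-$x$ data.

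The cubic contributions integrate to zero on spheres by oddness, but their cross-products with the quadratic data should produce a positive quartic-order correction both in the Cauchy-Schwarz slack of Lemma~\ref{energy bound} and in the asymptotic formula for $\sigma E(\sigma)/I(\sigma)$. Pushing these through the argument of Proposition~\ref{mean value inequality} would yield a refined mean-value inequality
\[
\fint_{B_\sigma(x_0)}\abs{\nabla u}^2 d\mu \ge \abs{\nabla u}^2(x_0) + \frac{Ric:\pi(x_0) + \alpha\abs{C(x_0)}^2}{n+2}\sigma^2 + o(\sigma^2)
\]
for some universal $\alpha>0$, and then Proposition~\ref{subharmonicity} would deliver the strengthened Bochner inequality displayed above. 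On a compact $M$ with non-negative Ricci, integration would force $Ric:\pi\equiv0$ and $\abs{C}^2\equiv0$; combining the vanishing of $C$ with the radial-geodesic analysis used in Corollary~\ref{main3} would then show that $u$ sends constant-speed geodesics to constant-speed geodesics.

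The hard part is the construction and control of $C_{ijk}$ in a purely NPC setting. The tensor $\pi$ arises from first-order directional energies, but a third-order Taylor coefficient requires a quantitative comparison of first-order data at nearby basepoints, and the NPC comparison principle supplies only one-sided inequalities rather than the two-sided equality that the refined expansion of $d^2(u(x),u(0))$ demands. One would need either a two-sided refinement — perhaps after imposing a mild additional regularity on $X$ — or a wholly different, variational definition of $C$. A secondary difficulty is that several $o(\sigma^k)$ error terms arising from Proposition~\ref{lebesgue-little o} applied to the non-smooth tensor $\pi$ would have to be sharpened by one power of $\sigma$, which in turn requires a weak differentiability of $\pi$ at almost every point that is not established in this paper, and which is precisely what makes the general case substantially harder than the flat case treated in Corollary~\ref{main3}.
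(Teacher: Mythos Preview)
The statement you are addressing is a \emph{conjecture} in the paper, not a theorem; the paper offers no proof. Immediately before stating it, the author writes that ``the methods of this paper are not quite strong enough to detect this when the domain is not flat,'' and that ``the sectional curvatures of the domain influence many of the $o(\sigma^k)$ error terms,'' so that the equality analysis used in the flat case (Corollary~\ref{main3}) breaks down. There is therefore nothing to compare your attempt against.

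What you have written is not a proof but a research outline, and you say so yourself in the last two paragraphs. The strategy --- extract a non-negative second-order quantity $T(u)$ playing the role of $\abs{\nabla du}^2$ and upgrade the Bochner inequality --- is the natural one, and your identification of the obstacles is accurate and in fact matches the paper's own diagnosis. The two specific difficulties you name are exactly where a genuine proof would have to do new work: (i) defining a third-order coefficient $C_{ijk}$ for maps into a bare NPC space, where the comparison principle gives only one-sided control and there is no obvious analogue of $\nabla du$; and (ii) sharpening the $o(\sigma^k)$ terms that arise from pulling the non-smooth tensor $\pi$ out of integrals via Proposition~\ref{lebesgue-little o}, which would require some weak differentiability of $\pi$ that is not available in general. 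Neither of these is resolved in your proposal, so as it stands it is a plausible plan of attack rather than a proof, and the conjecture remains open.

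One further point: your outline silently restricts to compact $M$ for the integration argument, and then gestures at ``exhaustion by balls'' for the non-compact case. The conjecture as stated has no compactness hypothesis, and passing from a local mean-value inequality to a global totally-geodesic conclusion on a non-compact manifold is itself nontrivial; you would need some control on the growth of $\abs{\nabla u}^2$ at infinity, which is an additional assumption not present in the statement.
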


\end{document}